\DeclareMathOperator*{\argmin}{arg\,min}
\def\tred{\textcolor{black}}
\def\zzz{\textcolor{black}}
\newcommand{\Vn}{\mathbf{n}}
\newcommand{\Vb}{\mathbf{b}}
\newcommand{\Vx}{\mathbf{x}}
\newcommand{\R}{\mathbb{R}}
\newcommand{\Pol}{\mathbb{P}}
\newcommand{\sign}{\mathrm{sign}}
\newcommand{\card}{\mathrm{card}}
\newcommand{\Par}{\mathcal{P}}
\newcommand{\Sk}{\mathcal{S}}
\newcommand{\D}{\mathcal{D}}
\newtheorem{ssmptn}{Assumption}
\newtheorem{prpstn}{Proposition}
\newtheorem{thrm}{Theorem}
\newtheorem{rmrk}{Remark}
\newcommand{\llbrace}{\lbrace \hspace{-.045cm}\lbrace }
\newcommand{\rrbrace}{\rbrace \hspace{-.05cm}\rbrace }
\begin{document}

\begin{frontmatter}
\title{\tred{An adaptive stabilized conforming finite element method via residual minimization on dual discontinuous Galerkin norms}}

\author[one,three]{Victor M. Calo}

\author[four,five]{Alexandre Ern}

\author[six]{Ignacio Muga}


\author[one]{Sergio Rojas\corref{mycorrespondingauthor}}
\cortext[mycorrespondingauthor]{Corresponding author}
\ead{srojash@gmail.com}

\address[one]{School of Earth and Planetary Sciences, Curtin University, Kent Street, Bentley, Perth, WA 6102, Australia}

\address[three]{Mineral Resources, Commonwealth Scientific and Industrial Research Organisation (CSIRO), Kensington, Perth, WA 6152, Australia}

\address[four]{Universit\'e Paris-Est, CERMICS (ENPC), 6-8 avenue Blaise Pascal, 77455 Marne la Vall\'ee cedex 2, France}

\address[five]{INRIA Paris, 75589 Paris, France}

\address[six]{Instituto de Matem\'aticas, Pontificia Universidad Cat\'olica de Valpara\'iso, Casilla 4059, Valpara\'iso, Chile}




\begin{abstract}
We design and analyze a new adaptive stabilized finite element method. We construct a discrete approximation of the solution in a continuous trial space by minimizing the residual measured in a dual norm of a discontinuous test space that has inf-sup stability. We formulate this residual minimization as a stable saddle-point problem which delivers a stabilized discrete solution and a residual representation that drives the adaptive mesh refinement. Numerical results on an advection-reaction model problem show competitive error reduction rates when compared to discontinuous Galerkin methods on uniformly refined meshes and smooth solutions. Moreover, the technique leads to optimal decay rates for adaptive mesh refinement and solutions having sharp layers.
\end{abstract}

\begin{keyword}
stabilized finite elements \sep residual minimization\sep inf-sup stability\sep advection-reaction\sep adaptive mesh refinement\sep Discontinuous Petrov-Galerkin
\MSC[2010] 65N12\sep 65N30\sep 76M10
\end{keyword}

\end{frontmatter}


\section{Introduction}

Continuous Galerkin Finite Element Methods (CG-FEM) are popular solution strategies in engineering applications. However, these methods  suffer from instability under reasonable physical assumptions if the mesh is not \zzz{fine enough}. This limitation led to the development of several alternative methods that achieve stability differently (see, e.g.,~\cite{ ern2013theory, HUGHES2017} and references therein). \zzz{Broadly speaking, one can identify} two approaches to enhance stability in CG-FEM: residual and fluctuation-based stabilization techniques\zzz{~\cite{ Ern_Guermond_16}}. The Least-Squares Finite Element Method (LS-FEM) \zzz{exemplifies the residual-based stabilization\tred{,} which was} pioneered in the late \zzz{'60s} and early \zzz{'70s} (cf.,~\cite{ Diska:68, Lucka:69, BrSc70} and~\cite{ Jiang99} for a recent overview). The \zzz{Galerkin/Least-Squares (GaLS) method~\cite{ HFH89} addresses} the suboptimal convergence rate of LS-FEM in the $L^2$-norm. \zzz{The authors in~\cite{ CohDahWelM2AN2012} extended} the Least-Squares approach to more general \emph{dual norms} \zzz{and connected} residual minimization in test dual norms and the mixed formulation (cf., Equation~\eqref{eq:mix_form}). In this context,\zzz{~\cite{ ChaEvaQiuCAMWA2014} investigated} the use of different test norms to stabilize convection-diffusion problems. \zzz{Also}, the idea of residual minimization in non-standard dual norms is at the heart of the recent Discontinuous Petrov--Galerkin (DPG) methods (see, e.g.,~\cite{ zitelli2011class, DemHeuSINUM2013, ChaHeuBuiDemCAMWA2014} and~\cite{ DemGopBOOK-CH2014} for \zzz{an} overview).
Alternatively, several strategies based on fluctuation stabilization exist. Some methods penalize the gradient of some fluctuation or some fluctuation of the gradient~\cite{ GuM2AN, BecBra:01, MaSkT:07}. Other methods penalize the gradient jumps across the mesh interfaces~\cite{ Burman:05, BurHa:04}. Yet, other techniques enlarge the trial and test spaces with discontinuous functions and penalize the solution jumps across the mesh interfaces, as in the discontinuous Galerkin (DG) methods~\cite{ osti_4491151, LesRa:74, JohPi:86, CoKaS:00, BrMaS:04, Ern2006} (see \zzz{also}~\cite{ di2011mathematical}).

In this work, we introduce a new adaptive stabilized FEM and study its numerical performance when approximating an advection-reaction model problem. We combine residual minimization with the inf-sup stability offered by a large class of DG methods. More precisely, the discrete solution lives in a continuous trial space (e.g., $H^1$-conforming finite elements) and minimizes the residual in a dual norm with respect to the DG test functions. This DG norm provides inf-sup stability to the formulation. In practice, such a residual minimization implies a stable saddle-point problem involving the continuous trial space and the discontinuous test space. \zzz{Two advantages compensate the extra cost of solving a significantly} larger linear system (compared to just forming the normal equations as in LS-FEM). Firstly, \zzz{the continuous} solution is more accurate, especially in the $L^2$-norm, than the LS-FEM solution. \zzz{We} prove that the present method leads to error estimates of the same quality as those delivered by \zzz{the} DG methods, yet for a discrete solution belonging to a continuous trial space only. The second advantage is that in \zzz{our} approach, the component in the DG space plays the role of a residual representative that \zzz{drives the adaptive mesh refinements}. Thus, we \zzz{simultaneously compute} a discrete solution in the continuous trial space and an error representation in the discontinuous test space. From the practical point of view, we illustrate the benefits of the adaptive mesh refinement procedure \zzz{numerically} in \tred{an} advection-reaction model problem with solutions possessing sharp inner layers. 

We now put our work in perspective \zzz{with the state-of-the-art}. Compared with the LS-FEM paradigm, we solve a larger linear system\zzz{. However,} the advantage is that we obtain a residual representation that guides the mesh adaptation \zzz{to deliver better} error decay rates \zzz{relative} to the number of degrees of freedom. Compared to \zzz{the} Discontinuous Petrov--Galerkin (DPG) methods~\cite{ DemGopCMAME2010, DemGopNMPDE2011}, we share the goal of minimizing the residual \zzz{in} an adequate norm, using broken test spaces, implying discrete stability\zzz{,} and a residual representation to guide adaptivity. Nevertheless, the main difference is that \zzz{the DPG methods focus on} conforming formulations and localizable test norms. In particular, for first-order PDEs\zzz{,} and if the trial space is continuous, the norm equipping the test space must be the $L^2$-norm. Stronger norms can be applied using ultra-weak formulations\zzz{,} which only require $L^2$ regularity for the trial space. This \zzz{lower regularity, in practice,} leads to a discontinuous approximation of the trial solution, \zzz{which requires} additional unknowns to represent traces of the solution over the mesh skeleton\zzz{. However,} the broken test space structure with localizable \zzz{norms} allows for static condensation\tred{, which, when combined with multigrid techniques, leads to linear cost solvers. Instead, in our work}, the trial space is continuous, but the test space \zzz{has} a stronger norm than $L^2$\zzz{. We inherit} the stability of the method from the DG formulation that provides the inner product and \zzz{the norm on} which we build our method.
\tred{Weak primal solutions in $L^2$ and residuals measured in the dual norm of the {\it optimal test norm are considered in~\cite{ DahHuaSchWelSINUM2012}.} That is, the $L^2$ norm of the adjoint operator applied to the test function. The residual in the mixed formulation drives adaptivity, but the discrete stability was only assumed. The proof of discrete stability came later, in the context of DPG formulations for transport and a particular family of nested meshes~\cite{ BroDahSteMOC2018}. In a different context, building on non-Hilbert spaces for first-order PDEs, residuals measured in the Lebesgue space $L^p$, with $1\leq p<+\infty$, were considered in~\cite{ GueSINUM2004} (which requires to solve a non-linear problem for $p\neq 2$). An extension of these results to weak advection-reaction problems, where the solutions live in $L^p$ while the residual measure is the dual graph-norm, was given in~\cite{ MugTylZeeCMAM2019}.}


We organize the paper as follows. In Section~\ref{sec:prelim}, we recall some basic facts concerning the continuous and discrete settings. In particular, we present the abstract framework from~\cite{ Ern2006} for the error analysis of nonconforming approximation methods (see also~\cite{ di2011mathematical} and Strang's Second Lemma). We devise and analyze the present stabilized FEM in Section~\ref{sec:res_min}, where our main results are Theorems~\ref{th:FEMwDG} and~\ref{th:err_rep}. In Section~\ref{sec:AR}, we briefly describe the mathematical setting for the advection-reaction model problem and recall its DG discretization using both centered and upwind fluxes. In Section~\ref{sec:num}, we present numerical results illustrating the performance of our adaptive stabilized FEM. Finally, we \zzz{conclude with} Section~\ref{sec:conc}.

\section{Continuous and discrete settings}\label{sec:prelim}

\subsection{Well-posed model problem}

Let $X$ (continuous trial space) and $Y$ (continuous test space) be (real) Banach spaces equipped with norms $\|\cdot\|_X$ and $\|\cdot\|_Y$, respectively. Assume that $Y$ is reflexive. We want to solve the following linear model problem:
\begin{equation}\label{eq:vf_cont}
\left\{\begin{array}{l}
\text{Find } u \in X,  \text{ such that:} \smallskip \\
b(u,v) = \left<l,v\right>_{Y^\ast \, , \, Y}, \quad \forall \, v \in Y,
\end{array}
\right.
\end{equation}
where $b$ is a bounded bilinear form on $X \times Y$, $l \in Y^\ast$ is a bounded linear form on $Y$, and $\left<\cdot,\cdot\right>_{Y^\ast \, , \, Y}$ is the duality pairing in $Y^\ast\times Y$.  Equivalently, \zzz{we write} problem~\eqref{eq:vf_cont} in operator form by introducing the operator $B : X \rightarrow Y^\ast$ such that:
\begin{equation}
\left<B \, z \, , \, v\right>_{Y^\ast \, , \, Y} := b( z , v), \quad \forall \, (z,v) \in X \times Y, 
\end{equation}
leading to the following problem:
\begin{equation}\label{eq:B_cont}
\left\{\begin{array}{l}
\text{Find } u \in X,  \text{ such that:} \smallskip \\
B u = l \, \text{ in } Y^\ast.
\end{array}
\right.
\end{equation}
We assume that there exists a constant $C_b > 0$ such that 
\begin{equation}
\inf_{0\neq z \in X} \sup_{0 \neq y \in Y} \frac{|b(z,y)|}{\|z\|_X \|y\|_Y} \geq C_b\,,
\end{equation}
and that $\left\{ y \in Y : b(z,y) = 0, \forall \, z \in X \right\} = \{0\}$.  These two assumptions are equivalent to problem~\eqref{eq:vf_cont} (or equivalently~\eqref{eq:B_cont}) being well-posed owing to the Banach--Ne\u{c}as--Babu\u{s}ka theorem (see, e.g.,~\cite[Theorem 2.6]{ ern2013theory}). Moreover, the following a~priori estimate is then satisfied:
\begin{equation}\label{eq:cont_bound}
\|u\|_X \leq C^{-1}_b \|l\|_{Y^\ast}.
\end{equation}
Finally, we mention that when $Y=X$, a sufficient condition for well-posedness is that the bilinear form $b$ is coercive, that is, there exists a constant $C_b > 0$ such that
\begin{equation}\label{eq:coerc}
b(v,v) \geq C_b \|v\|_X^2, \quad \forall \, v \in X.
\end{equation}
Then problem~\eqref{eq:vf_cont} (or equivalently~\eqref{eq:B_cont}) is well-posed owing to the the Lax--Milgram lemma (see, e.g.,~\cite{ lax2005parabolic}), and \zzz{it satisfies} the same a~priori estimate~\eqref{eq:cont_bound}.

\subsection{Functional setting}

For any open and bounded set $\D \subset \R^d$, let $L^2(\D)$ be the standard Hilbert space of square-integrable functions over $\D$ for the Lebesgue measure, and denote by $\displaystyle (\cdot,\cdot)_\D$ and $\|\cdot\|_{\D} := \sqrt{(\cdot,\cdot)_\D}$ its inner product and inherited norm, respectively. Denote by $L^2(\D;\R^d)$ the corresponding space composed of square-integrable vector-valued functions and, \zzz{abusing notation}, still by $(\cdot, \cdot)_\D$ the associated inner product. Considering weak derivatives, we recall the following well-known Hilbert space:
\begin{align}\label{eq:h1}
H^1(\D) &:= \left\{ v \in L^2(\D) \, : \, \nabla v \in L^2(\D;\R^d)\right\}, 
\end{align}
equipped with the following inner product and norm (respectively):
\begin{equation}
(v,u)_{1,\D} := (v,u)_{\D} + (\nabla v,\nabla u)_{\D}, \qquad \|v\|_{1,\D} := \sqrt{(v,v)_{1,\D}},
\end{equation}
Let $H^{1/2}(\partial \D)$ be the standard Dirichlet trace space of $H^1(\D)$ over the boundary $\partial \D$, and $H^{-1/2}(\partial \D)$ be its dual space with $L^2(\partial\D)$ as pivot space. More generally, to warrant that traces are \zzz{well-defined} at least in $L^2(\partial \D)$, \zzz{we call} upon the fractional-order Sobolev spaces $H^s(\D)$, with $s>\frac12$.

\subsection{Discrete setting}\label{sec:discrete}
\begin{figure}[t!]
\centering
\includegraphics[scale=0.9]{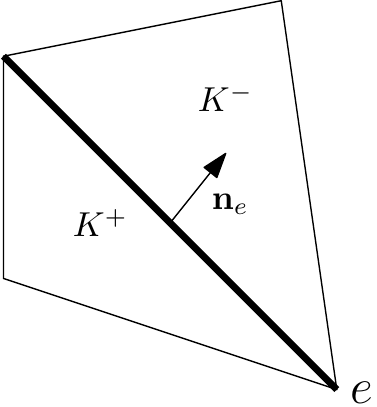}
\caption{Skeleton orientation over the internal face $e = \partial K^+ \cap \partial K^-$.}
\label{fig:skeleton}
\end{figure}
Let $\Par_h =\{ K_m \}^N_{m=1} $ be a conforming partition of the domain $\D$ into $N$ open disjoint elements $K_m$, such that 
\begin{equation}
\D_h := \bigcup^N_{m = 1} K_m \quad \text{ satisfies } \quad \D = \text{int}\left(\overline{\D_h}\right).
\end{equation}
We denote by $\partial K_m$ the boundary of the element $K_m$, by $\Sk^0_h$, $\Sk^\partial_h$ the set of interior and boundary edges/faces (respectively) of the mesh, and by $\Sk_h := \Sk^0_h \cup \Sk^\partial_h$ the skeleton of $\D_h$. Over $\D_h$, we define the following standard broken Hilbert space:
\begin{align}
H^1(\D_h) &:= \left\{ v \in L^2(\D) \, : \, \nabla v|_{K_m} \in L^2(K_m;\R^d), \, \forall \, m = 1,\dots, N \right\}, 
\end{align}
\zzz{with the following inner product definition:}
\begin{equation}
(v,u)_{1,\D_h} := \sum^N_{m=1} (v,u)_{1,K_m}. 
\end{equation}
For any $v \in H^1(\D_h)$ and any interior face/edge $e = \partial K^+ \cap \partial K^- \in \Sk^0_h$  (see Figure~\ref{fig:skeleton}), we define the jump $\llbracket v \rrbracket_e$ and the average $\llbrace v \rrbrace_e$ of a smooth enough function $v$ as follows:
\begin{align}
\llbracket v \rrbracket_e(x) &:= v^+(x) - v^-(x),\label{eq:jump1}\\
\llbrace  v \rrbrace_e(x) &:= \dfrac{1}{2} (v^+(x) + v^-(x)), \label{eq:jump2}
\end{align}
for a.e.~$x\in e$, where $v^+$ and $v^-$ denote the traces over $e$ defined from a \zzz{fixed, predefined orientation of the} unit normal vector $\Vn_e$. \zzz{We omit the} subscript $e$ from the jump and average operators when there is no ambiguity.
 
Finally, we denote by $\Pol^{p}(K_m)$, $p\ge0$, the set of polynomials of total degree at most $p$ defined on the element $K_m$, and we consider the following broken polynomial space:
\begin{align} \label{eq:def_bk_pol}
\Pol^p(\D_h) &:= \left\{ v \in L^2(\D) \, : \, v|_{K_m} \in \Pol^p(K_m), \, \forall \, m = 1,\dots, N  \right\}.
\end{align}

\subsection{Abstract setting for nonconforming approximation methods} \label{sec:stand_DG}

Let $V_h$ be a finite-dimensional space composed of functions defined on $ \D_h$ (typically a broken polynomial space).  We approximate the continuous problem~\eqref{eq:vf_cont} as follows:
\begin{equation}\label{eq:vf_discrete}
\left\{\begin{array}{l}
\text{Find } \theta_h \in V_h,  \text{ such that:} \\
b_h(\theta_h, v_h) = \langle l_h,v_h\rangle_{V_h^\ast\times V_h}, \quad \forall \, v_h \in V_h,
\end{array}
\right.
\end{equation}
where $b_h(\cdot,\cdot)$ denotes a discrete bilinear form defined over $V_h \times V_h$, and $l_h(\cdot)$ a discrete linear form over $V_h$.  We say that the approximation setting is nonconforming whenever $V_h\not\subset X$ or $V_h\not\subset Y$.

To ascertain \zzz{the well-posedness of the} discrete problem~\eqref{eq:vf_discrete} and to perform the error analysis, we follow the framework introduced for DG approximations in~\cite{ Ern2006} (see also~\cite{ di2011mathematical} and Strang's Second Lemma) which relies on the following three assumptions:
\begin{ssmptn}\label{as:inf-sup}
  (Stability): The space $V_h$ \zzz{is} equipped with a norm $\|\cdot\|_{V_h}$ such that there exists a constant $C_{\emph{sta}}>0$, uniformly with respect to the mesh size, such that:
  \begin{equation}\label{eq:infsup_h}
    \inf_{0\neq z_h \in V_h} \sup_{0\neq v_h \in V_h} \dfrac{b_h(z_h,v_h)}{\| z_h \|_{V_h}\| v_h \|_{V_h}}  \geq C_{\emph{sta}}.
  \end{equation}
\end{ssmptn}
\begin{ssmptn} \label{as:reg-consi}
  (Strong consistency with regularity) The exact solution $u$ of~\eqref{eq:vf_cont} belongs to a subspace $X_{\#} \subset X$ such that the discrete bilinear form $b_h(\cdot,\cdot)$ supports evaluations in the extended space $V_{h,\#} \times V_h$ with $V_{h,\#} := X_{\#} + V_h$, and the following holds true:
  \begin{equation}\label{eq:consistency}
    b_h(u,v_h) = \langle l_h,v_h\rangle_{V_h^\ast\times V_h}, \quad \forall \, v_h \in V_h,
  \end{equation}
  which amounts to $b_h(u-\theta_h,v_h) = 0$, for all $v_h \in V_h$ (Galerkin's orthogonality).
\end{ssmptn}
\begin{ssmptn} \label{as:bound}
  (Boundedness): The stability norm $\|\cdot\|_{V_h}$ can be extended to $V_{h,\#}$ and there is a second norm $\|\cdot\|_{V_h,\#}$ on $V_{h,\#}$ satisfying the following two properties:
  \begin{enumerate}[(i)]
  \item $\|v\|_{V_h} \leq \|v\|_{V_h,\#}$, for all $v \in V_{h,\#}$;
  \item there exists a constant $C_{\emph{bnd}}<\infty$, \zzz{uniform} with respect to the mesh size, such that
    \begin{equation}\label{eq:continuity}
      b_h(z,v_h) \leq C_{\emph{bnd}} \, \|z\|_{V_h,\#} \| v_h \|_{V_h}, \quad \forall \, (z, v_h) \in V_{h,\#} \times V_{h}.
    \end{equation}
  \end{enumerate}
\end{ssmptn}
For a linear form $\phi_h:V_h\to\R$, we set
\begin{equation}\label{eq:dualnorm}
\|\phi_h\|_{V_h^\ast}:=\sup_{0\ne v_h\in V_h} \frac{\langle \phi_h,v_h\rangle_{V_h^\ast\times V_h}}{\|v_h\|_{V_h}},
\end{equation}
where $\|\cdot\|_{V_h}$ is the stability norm identified in Assumption~\ref{as:inf-sup}. The above assumptions lead to the following a~priori and error estimates (see~\cite{ Ern2006, di2011mathematical}).

\begin{thrm}[\tred{Discrete well-posedness and a~priori estimates I}]\label{thrm:well-posedness}
Denote by $u$ the solution of the continuous problem~\eqref{eq:vf_cont} and suppose that the assumptions~\ref{as:inf-sup}--\ref{as:bound} are satisfied. Then there exists a unique $\theta_h \in V_h$ solution to the discrete problem~\eqref{eq:vf_discrete}, and the following two estimates are satisfied:
\begin{equation}
\|\theta_h\|_{V_h} \leq \dfrac{1}{C_{\emph{sta}}}\|l_h\|_{V_h^\ast},
\end{equation}
and
\begin{equation}
\|u-\theta_h\|_{V_h} \leq \left(1 + \dfrac{C_{\emph{bnd}}}{C_{\emph{sta}}} \right) \inf_{v_h \in V_h}\|u-v_h\|_{V_h,\#}.
\end{equation} 
\end{thrm}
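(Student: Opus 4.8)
The plan is to follow the classical Strang-type argument underlying~\cite{ Ern2006}. First I would settle discrete well-posedness. Since the trial and test spaces in~\eqref{eq:vf_discrete} coincide and $V_h$ is finite-dimensional, the operator $B_h:V_h\to V_h^\ast$ defined by $\langle B_h z_h,v_h\rangle_{V_h^\ast\times V_h}:=b_h(z_h,v_h)$ is a linear map between spaces of equal finite dimension. The inf-sup condition~\eqref{eq:infsup_h} forces $B_h$ to be injective: if $B_h z_h=0$ then the supremum in~\eqref{eq:infsup_h} over $v_h$ vanishes, hence $C_{\emph{sta}}\|z_h\|_{V_h}\le 0$ and $z_h=0$. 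Injectivity of a square linear system gives bijectivity, so~\eqref{eq:vf_discrete} has a unique solution $\theta_h$.

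Second, I would derive the a~priori bound on $\theta_h$. Apply~\eqref{eq:infsup_h} with $z_h=\theta_h$, then use the discrete equation~\eqref{eq:vf_discrete} to replace $b_h(\theta_h,v_h)$ by $\langle l_h,v_h\rangle_{V_h^\ast\times V_h}$ in the numerator, and recognize the resulting supremum over $0\ne v_h\in V_h$ as $\|l_h\|_{V_h^\ast}$ from the definition~\eqref{eq:dualnorm}. This yields $C_{\emph{sta}}\|\theta_h\|_{V_h}\le\|l_h\|_{V_h^\ast}$, which is the first estimate.

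Third, for the error estimate, fix an arbitrary $v_h\in V_h$ and write $u-\theta_h=(u-v_h)+(v_h-\theta_h)$. The first term is controlled by $\|u-v_h\|_{V_h,\#}$ using property~(i) of Assumption~\ref{as:bound}. For the second term, which lies in $V_h$, I would invoke the inf-sup condition once more: $C_{\emph{sta}}\|v_h-\theta_h\|_{V_h}\le\sup_{0\ne w_h\in V_h}b_h(v_h-\theta_h,w_h)/\|w_h\|_{V_h}$. Now split $b_h(v_h-\theta_h,w_h)=b_h(v_h-u,w_h)+b_h(u-\theta_h,w_h)$; the second summand vanishes by Galerkin orthogonality (Assumption~\ref{as:reg-consi}), and the first is bounded by $C_{\emph{bnd}}\|u-v_h\|_{V_h,\#}\|w_h\|_{V_h}$ via~\eqref{eq:continuity}. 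Combining the two pieces gives $\|u-\theta_h\|_{V_h}\le(1+C_{\emph{bnd}}/C_{\emph{sta}})\|u-v_h\|_{V_h,\#}$, and taking the infimum over $v_h\in V_h$ completes the proof.

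The argument is essentially routine; the only point requiring care — and the reason Assumptions~\ref{as:reg-consi} and~\ref{as:bound} are phrased on the extended space $V_{h,\#}=X_\#+V_h$ — is that $u\notin V_h$ in general, so the quantities $b_h(v_h-u,w_h)$ and $\|u-v_h\|_{V_h,\#}$ must be interpreted on $V_{h,\#}$. One simply checks that the decomposition above and the boundedness bound~\eqref{eq:continuity} are only ever applied to pairs in $V_{h,\#}\times V_h$, which is indeed the case. I do not anticipate any genuine obstacle beyond this bookkeeping.
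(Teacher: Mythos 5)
Your proof is correct and follows exactly the classical argument the paper relies on (it cites~\cite{Ern2006, di2011mathematical} rather than reproducing it): injectivity from the inf-sup condition on a square finite-dimensional system, the stability bound by testing the inf-sup with $\theta_h$ and using~\eqref{eq:vf_discrete}, and the Strang-type splitting $u-\theta_h=(u-v_h)+(v_h-\theta_h)$ combined with Galerkin orthogonality and the boundedness~\eqref{eq:continuity} on $V_{h,\#}\times V_h$. Your bookkeeping on where the extended norm $\|\cdot\|_{V_h,\#}$ is needed is also exactly right, so there is nothing to add.
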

%
\section{Residual minimization problem}\label{sec:res_min}

\zzz{Our schemes build on the following two ideas:}
\begin{itemize}
\item[a)] First, we select a finite-dimensional space $V_h$ and a discrete bilinear form $b_h(\cdot,\cdot)$ such that Assumptions~\ref{as:inf-sup}, \ref{as:reg-consi}, and~\ref{as:bound} hold true.
\item[b)] Second, we identify a subspace $U_h \subset V_h$ such that 
$U_h$ has the same approximation capacity as the original space $V_h$ for the types of solutions we want to approximate. The \zzz{primary} example we have in mind is to choose $V_h$ as a broken polynomial space and $U_h$ as the $H^1$-conforming subspace. We refer the reader to Remark~\ref{rem:best_app} for a further discussion on the approximation capacity of the spaces $U_h$ and $V_h$ in the context of advection-reaction equations.
\end{itemize}

Starting from a stable formulation of the form~\eqref{eq:vf_discrete} in $V_h$, we use the trial subspace $U_h \subset V_h$ to solve the following residual minimization problem:
\begin{equation}\label{eq:min_prob}
\left\{\begin{array}{l}
\text{Find } u_h \in U_h \subset V_h,  \text{ such that:} \smallskip \\
\displaystyle u_h = \argmin_{z_h \in U_h} \dfrac{1}{2}\|l_h- B_h \, z_h\|^2_{V_h^\ast} = \argmin_{z_h \in U_h} \dfrac{1}{2}\|R^{-1}_{V_h}(l_h- B_h z_h)\|^2_{V_h},
\end{array}
\right.
\end{equation}
where $B_h : V_{h,\#} \rightarrow V_h^{\ast}$ is defined as: 
\begin{align}\label{eq:B_h}
 \langle B_h z, v_h \rangle_{V_h^{\ast} \times V_h} := b_h(z,v_h),
\end{align}
and $R^{-1}_{V_h}$ denotes the inverse of the Riesz map:
\begin{equation}\label{eq:riesz}
  \begin{array}{rcl}
    R_{V_h} &:& V_h \rightarrow V_h^\ast \smallskip\\
            && \left< R_{V_h} y_h, v_h\right>_{V_h^\ast \times V_h} := (y_h,v_h)_{V_h}, \quad \forall \ v_h\in V_h.
  \end{array}
\end{equation}
The second equality in~\eqref{eq:min_prob} follows from the fact that the Riesz map is an isometric isomorphism.  Classically, the minimizer in~\eqref{eq:min_prob} is a critical point of the minimizing functional, which translates into the following linear problem:
\begin{align}\label{eq:min_vf}
\left\{\begin{array}{l}
\text{Find } u_h \in U_h, \text{ such that:} \\
(R^{-1}_{V_h} (l_h-B_h \, u_h),R^{-1}_{V_h} B_h \delta u_h)_{V_h} = 0, \quad \forall \, \delta u_h \in U_h.
\end{array}\right.
\end{align}
Defining the residual representation function as 
\begin{equation}\label{eq:e_h}
\varepsilon_h := R^{-1}_{V_h} (l_h-B_h u_h) \in V_h,
\end{equation} 
problem~\eqref{eq:min_vf} is equivalent to finding the pair $(\varepsilon_h, u_h) \in V_h \times U_h$, such that: 
\begin{subequations}
\label{eq:mix_form}
\begin{empheq}[left=\left\{,right=\right.,box=]{alignat=3}
%
\label{eq:mix_forma}
&\,\, (\varepsilon_h \, , \, v_h)_{V_h} + b_h(u_h \, , \, v_h)  && =  l_h(v_h),   &\quad&\forall\, v_h \in V_h, \\
\label{eq:mix_formb}
&\,\,b_h(z_h\, , \, \varepsilon_h) &&=  0,  &\quad&\forall\, z_h \in U_h.
\end{empheq}
\end{subequations}
Conversely, if the pair $(\varepsilon_h, u_h) \in V_h \times U_h$ solves~\eqref{eq:mix_form}, then~\eqref{eq:e_h} holds true and $u_h$ is the minimizer of the quadratic functional in~\eqref{eq:min_prob}.

\begin{thrm}[\tred{Discrete well-posedness and a~priori estimates II}]\label{th:FEMwDG}
  If the assumptions~\ref{as:inf-sup}-\ref{as:bound} are satisfied, then the saddle-point problem~\eqref{eq:mix_form} has \zzz{a unique} solution $(\varepsilon_h,u_h)\in V_h\times U_h$. Moreover, such a solution satisfies the following a~priori bounds:
  \begin{equation}\label{eq:bounds}
    \|\varepsilon_h\|\leq \|l_h\|_{V_h^\ast} \qquad \hbox{ and }\qquad \|u_h\|_{V_h} \leq \dfrac{1}{C_{\emph{sta}}}\|l_h\|_{V_h^\ast}\,\,,
  \end{equation}
  and the following a~priori error estimate holds:
  \begin{equation}\label{eq:apriori}
    \|u-u_h\|_{V_h} \leq \left(1 + \dfrac{C_{\emph{bnd}}}{C_{\emph{sta}}}\right) \inf_{z_h \in U_h}\|u-z_h\|_{V_{h,\#}}\,\,,
  \end{equation}
  \zzz{where} $u\in X_{\#}$ is the exact solution to the continuous problem~\eqref{eq:vf_cont}.
\end{thrm}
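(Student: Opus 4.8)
The plan is to treat the saddle-point problem~\eqref{eq:mix_form} as a square linear system on the finite-dimensional space $V_h\times U_h$, to establish well-posedness by proving uniqueness, and then to extract both the a~priori bounds~\eqref{eq:bounds} and the error estimate~\eqref{eq:apriori} from a single orthogonality property encoded in~\eqref{eq:mix_formb}. For well-posedness, since the system is square and finite-dimensional it suffices to check that $l_h=0$ forces $(\varepsilon_h,u_h)=(0,0)$: testing~\eqref{eq:mix_forma} with $v_h=\varepsilon_h$ and~\eqref{eq:mix_formb} with $z_h=u_h$ gives $\|\varepsilon_h\|_{V_h}^2=-b_h(u_h,\varepsilon_h)=0$, hence $\varepsilon_h=0$, so~\eqref{eq:mix_forma} reduces to $b_h(u_h,v_h)=0$ for all $v_h\in V_h$, and since $u_h\in U_h\subset V_h$ the inf-sup condition~\eqref{eq:infsup_h} of Assumption~\ref{as:inf-sup} forces $u_h=0$. (Equivalently, one could invoke Brezzi's theory: the $(1,1)$-block is the $V_h$-inner product, coercive on all of $V_h$ and in particular on the relevant kernel, while the off-diagonal form $b_h$ satisfies the inf-sup condition on $U_h\times V_h$ with the same constant $C_{\mathrm{sta}}$, inherited from~\eqref{eq:infsup_h} because $U_h\subset V_h$.)

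For the a~priori bounds I would use that~\eqref{eq:mix_forma} is equivalent to $\varepsilon_h=R^{-1}_{V_h}(l_h-B_h u_h)$, i.e.\ to the decomposition $R^{-1}_{V_h}l_h=R^{-1}_{V_h}B_h u_h+\varepsilon_h$ in $V_h$ (see~\eqref{eq:e_h}), while~\eqref{eq:mix_formb} tested with $z_h=u_h$ reads $(R^{-1}_{V_h}B_h u_h,\varepsilon_h)_{V_h}=b_h(u_h,\varepsilon_h)=0$, so the two summands are $V_h$-orthogonal. By the Pythagorean identity and the isometry of the Riesz map this gives $\|l_h\|_{V_h^\ast}^2=\|B_h u_h\|_{V_h^\ast}^2+\|\varepsilon_h\|_{V_h}^2$, whence at once $\|\varepsilon_h\|_{V_h}\le\|l_h\|_{V_h^\ast}$ and $\|B_h u_h\|_{V_h^\ast}\le\|l_h\|_{V_h^\ast}$; the inf-sup bound applied to $u_h\in U_h\subset V_h$, namely $C_{\mathrm{sta}}\|u_h\|_{V_h}\le\|B_h u_h\|_{V_h^\ast}$, then yields the second bound in~\eqref{eq:bounds}.

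For the error estimate I would first invoke strong consistency (Assumption~\ref{as:reg-consi}), which amounts to $B_h u=l_h$ in $V_h^\ast$, and hence $\varepsilon_h=R^{-1}_{V_h}B_h(u-u_h)$. For an arbitrary $z_h\in U_h$, decompose $R^{-1}_{V_h}B_h(u-z_h)=\varepsilon_h+R^{-1}_{V_h}B_h(u_h-z_h)$; since $u_h-z_h\in U_h$, equation~\eqref{eq:mix_formb} shows this is a $V_h$-orthogonal decomposition, so $\|B_h(u_h-z_h)\|_{V_h^\ast}\le\|B_h(u-z_h)\|_{V_h^\ast}$. Bounding the right-hand side from above by $C_{\mathrm{bnd}}\|u-z_h\|_{V_{h,\#}}$ via~\eqref{eq:continuity} (with $z=u-z_h\in V_{h,\#}$) and the left-hand side from below by $C_{\mathrm{sta}}\|u_h-z_h\|_{V_h}$ via~\eqref{eq:infsup_h} (legitimate since $u_h-z_h\in U_h\subset V_h$) gives $\|u_h-z_h\|_{V_h}\le (C_{\mathrm{bnd}}/C_{\mathrm{sta}})\|u-z_h\|_{V_{h,\#}}$; a triangle inequality, property~(i) of Assumption~\ref{as:bound}, and passing to the infimum over $z_h\in U_h$ then deliver~\eqref{eq:apriori}.

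The individual steps are routine; the crux of the argument is to recognize that~\eqref{eq:mix_formb} is exactly the statement that the residual representative $\varepsilon_h$ is $V_h$-orthogonal to $R^{-1}_{V_h}B_h(U_h)$. This Pythagorean structure is what makes the bounds~\eqref{eq:bounds} lossless and, in the error analysis, what produces the sharp constant $1+C_{\mathrm{bnd}}/C_{\mathrm{sta}}$: a cruder approach that splits off and estimates $\|\varepsilon_h\|_{V_h}$ on its own would only reach $1+2C_{\mathrm{bnd}}/C_{\mathrm{sta}}$.
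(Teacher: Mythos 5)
Your proof is correct, and it reaches the paper's conclusions by a partly different route. For existence and uniqueness the paper does not argue via a square system or Brezzi's theory: it shows that \eqref{eq:mix_form} is equivalent to minimizing $\tfrac12\|v_h\|_{V_h}^2-\langle l_h,v_h\rangle_{V_h^\ast\times V_h}$ over the closed subspace $(B_hU_h)^\perp=\{v_h\in V_h:\ b_h(z_h,v_h)=0,\ \forall z_h\in U_h\}$, recovers $u_h$ from the fact that $l_h-R_{V_h}\varepsilon_h\in B_hU_h$, and gets uniqueness from the injectivity of $B_h$ restricted to $U_h$ (a consequence of \eqref{eq:infsup_h}); your finite-dimensional injectivity argument (set $l_h=0$, test with $\varepsilon_h$ and $u_h$, then invoke \eqref{eq:infsup_h}) is shorter and equally valid, while the paper's variational characterization additionally explains $\varepsilon_h$ as the minimizer of a constrained quadratic functional. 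For \eqref{eq:bounds} the paper bounds the two components separately --- testing \eqref{eq:mix_forma} with $v_h=\varepsilon_h$, and a supremizer argument with $v_h=R_{V_h}^{-1}B_hu_h$ for the $u_h$-bound --- both steps resting on $b_h(u_h,\varepsilon_h)=0$; your Pythagorean identity $\|l_h\|_{V_h^\ast}^2=\|B_hu_h\|_{V_h^\ast}^2+\|\varepsilon_h\|_{V_h}^2$ packages the same orthogonality more compactly and yields both bounds at once. For \eqref{eq:apriori} the paper introduces the projector $P_h:V_{h,\#}\to U_h$ (second component of the saddle-point solution with data $b_h(z,\cdot)$), proves $\|P_hz\|_{V_h}\le (C_{\mathrm{bnd}}/C_{\mathrm{sta}})\|z\|_{V_{h,\#}}$ from \eqref{eq:bounds} and \eqref{eq:continuity}, and expands $u-u_h=(u-z_h)-P_h(u-z_h)$; your chain $C_{\mathrm{sta}}\|u_h-z_h\|_{V_h}\le\|B_h(u_h-z_h)\|_{V_h^\ast}\le\|B_h(u-z_h)\|_{V_h^\ast}\le C_{\mathrm{bnd}}\|u-z_h\|_{V_{h,\#}}$ is the same mechanism with the projector unrolled, the middle inequality replacing the projector-stability step by the orthogonality of $\varepsilon_h=R_{V_h}^{-1}B_h(u-u_h)$ to $R_{V_h}^{-1}B_h(U_h)$. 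Both arguments produce the constant $1+C_{\mathrm{bnd}}/C_{\mathrm{sta}}$: the paper's formulation buys a reusable bounded projector and the minimization viewpoint behind the residual representative, while yours buys brevity and makes the orthogonality structure explicit.
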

\begin{proof}                                                    %
  See~\ref{ap:main_proof}.
\end{proof}
\tred{The identity \eqref{eq:e_h} suggests considering $\varepsilon_h$ as a residual representative. The following simple result proves that $\|\varepsilon_h\|_{V_h}$ is an efficient error estimator.
\begin{prpstn}[\tred{Efficiency of the residual representative}] Under the same hypotheses as in Theorem~\ref{th:FEMwDG}, the following holds:
\begin{equation}\label{eq:efficiency}
\|\varepsilon_h\|_{V_h} \leq C_{\emph{bnd}} \, \|u-u_h\|_{V_h,\#}.
\end{equation}
\end{prpstn}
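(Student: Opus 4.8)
The plan is to chain together the isometry property of the Riesz map, Galerkin consistency (Assumption~\ref{as:reg-consi}), and the boundedness estimate (Assumption~\ref{as:bound}). First I would start from the definition~\eqref{eq:e_h}, $\varepsilon_h = R^{-1}_{V_h}(l_h - B_h u_h)$, and use that $R_{V_h}$ is an isometric isomorphism (as already noted after~\eqref{eq:min_prob}) to write $\|\varepsilon_h\|_{V_h} = \|l_h - B_h u_h\|_{V_h^\ast}$, where the dual norm is the one defined in~\eqref{eq:dualnorm}.

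Next I would expand the dual norm using~\eqref{eq:dualnorm} and~\eqref{eq:B_h}: for every $v_h \in V_h$ we have $\langle l_h - B_h u_h, v_h\rangle_{V_h^\ast \times V_h} = l_h(v_h) - b_h(u_h, v_h)$. Invoking the strong consistency identity~\eqref{eq:consistency}, namely $l_h(v_h) = b_h(u,v_h)$ for all $v_h \in V_h$ (which is legitimate because $u \in X_\# \subset V_{h,\#}$ and $b_h$ supports evaluations on $V_{h,\#}\times V_h$), this becomes $b_h(u - u_h, v_h)$. Note $u - u_h \in V_{h,\#}$ since $u \in X_\#$ and $u_h \in U_h \subset V_h$.

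Then I would apply the boundedness estimate~\eqref{eq:continuity} with $z = u - u_h$, giving $b_h(u-u_h,v_h) \le C_{\text{bnd}}\|u-u_h\|_{V_h,\#}\|v_h\|_{V_h}$ for all $v_h \in V_h$. Dividing by $\|v_h\|_{V_h}$ and taking the supremum over $0 \neq v_h \in V_h$ yields $\|l_h - B_h u_h\|_{V_h^\ast} \le C_{\text{bnd}}\|u-u_h\|_{V_h,\#}$, which combined with the first step is exactly~\eqref{eq:efficiency}.

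I do not anticipate a genuine obstacle here; the result is a short consequence of the three structural assumptions. The only point requiring a little care is making sure the manipulations are all performed within the extended space $V_{h,\#}$ where $b_h$ is defined, so that both the consistency identity and the boundedness bound may be legitimately applied to the difference $u - u_h$; once that is observed, the argument is a one-line estimate.
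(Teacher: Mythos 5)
Your argument is correct and follows essentially the same path as the paper's proof: Riesz isometry to pass to the dual norm, the consistency identity~\eqref{eq:consistency} to rewrite $l_h - B_h u_h$ as $B_h(u-u_h)$ on $V_h$, and the boundedness bound~\eqref{eq:continuity} applied to $u-u_h \in V_{h,\#}$. The only cosmetic difference is that the paper inserts the intermediate identity $l_h = B_h\theta_h$ (so that $\|\varepsilon_h\|_{V_h} = \|B_h(\theta_h-u_h)\|_{V_h^\ast} = \|B_h(u-u_h)\|_{V_h^\ast}$), whereas you replace $l_h$ by $B_h u$ directly via consistency; the content is identical.
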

\begin{proof}
Using the isometry property of the Riesz operator, the consistency Assumption~\ref{as:reg-consi} and the boundedness Assumption~\ref{as:bound}, we obtain
\begin{equation*}
\|\varepsilon_h\|_{V_h} = \|B_h(\theta_h- u_h)\|_{V_h^\ast} = \|B_h(u- u_h)\|_{V_h^\ast} \leq C_{\emph{bnd}}\, \|u-u_h\|_{V_h,\#}.
\end{equation*}
\end{proof}
To prove that the residual representative $\varepsilon_h$ is also reliable, we introduce the following additional assumption:}
\begin{ssmptn}(Saturation)\label{as:saturation}
  Let $(\varepsilon_h,u_h)\in V_h\times U_h$ solve the saddle-point problem~\eqref{eq:mix_form}. Let $\theta_h\in V_h$ be the unique solution to~\eqref{eq:vf_discrete}. There exists a real number $\delta\in [0,1)$, uniform with respect to the mesh size, such that $\|u-\theta_h\|_{V_h} \le \delta \|u-u_h\|_{V_h}$.
\end{ssmptn}
\begin{prpstn}[\zzz{Reliability of the residual representative}] \label{th:err_rep}
\zzz{Let 
$(\varepsilon_h,u_h)\in V_h\times U_h$ solve the saddle-point problem~\eqref{eq:mix_form}}. Let $\theta_h\in V_h$ be the unique solution to~\eqref{eq:vf_discrete}. Then the following holds true:
  \zzz{
  \begin{equation} \label{eq:bnd_uh-thetah}
    \|\theta_h-u_h\|_{V_h} \le \dfrac{1}{C_{\emph{sta}}} \|\varepsilon_h\|_{V_h}.
  \end{equation}
  }
  Moreover, if the saturation Assumption~\ref{as:saturation} is satisfied, then the following a~posteriori error estimate holds:
  \begin{equation} \label{eq:a_posteriori}
\|u-u_h\|_{V_h} \le \dfrac{1}{(1-\delta)C_{\emph{sta}}} \|\varepsilon_h\|_{V_h}.
\end{equation} 
\end{prpstn}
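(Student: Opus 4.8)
The plan is to prove the two claims separately, treating \eqref{eq:bnd_uh-thetah} as the core estimate and then deriving \eqref{eq:a_posteriori} as a short consequence of the triangle inequality and the saturation assumption. For \eqref{eq:bnd_uh-thetah}, I would exploit the discrete inf-sup stability (Assumption~\ref{as:inf-sup}) applied to the element $\theta_h - u_h \in V_h$: there exists $0\neq v_h\in V_h$ with $C_{\mathrm{sta}}\|\theta_h-u_h\|_{V_h}\|v_h\|_{V_h} \le b_h(\theta_h-u_h,v_h)$. The task is then to recognize $b_h(\theta_h-u_h,v_h)$ as something controlled by $\|\varepsilon_h\|_{V_h}$. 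Using the discrete equation \eqref{eq:vf_discrete} satisfied by $\theta_h$, namely $b_h(\theta_h,v_h)=l_h(v_h)$, together with the first saddle-point equation \eqref{eq:mix_forma}, which gives $b_h(u_h,v_h)=l_h(v_h)-(\varepsilon_h,v_h)_{V_h}$, I get $b_h(\theta_h-u_h,v_h) = (\varepsilon_h,v_h)_{V_h}$. Hence $C_{\mathrm{sta}}\|\theta_h-u_h\|_{V_h}\|v_h\|_{V_h} \le (\varepsilon_h,v_h)_{V_h} \le \|\varepsilon_h\|_{V_h}\|v_h\|_{V_h}$ by Cauchy--Schwarz on the inner product $(\cdot,\cdot)_{V_h}$; dividing by $\|v_h\|_{V_h}>0$ yields \eqref{eq:bnd_uh-thetah}.

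For the a~posteriori estimate \eqref{eq:a_posteriori}, I would start from the triangle inequality $\|u-u_h\|_{V_h} \le \|u-\theta_h\|_{V_h} + \|\theta_h-u_h\|_{V_h}$. The saturation Assumption~\ref{as:saturation} bounds the first term by $\delta\|u-u_h\|_{V_h}$, and \eqref{eq:bnd_uh-thetah} bounds the second by $C_{\mathrm{sta}}^{-1}\|\varepsilon_h\|_{V_h}$. Thus $\|u-u_h\|_{V_h} \le \delta\|u-u_h\|_{V_h} + C_{\mathrm{sta}}^{-1}\|\varepsilon_h\|_{V_h}$; since $\delta<1$ we may absorb the first term on the right into the left-hand side, obtaining $(1-\delta)\|u-u_h\|_{V_h} \le C_{\mathrm{sta}}^{-1}\|\varepsilon_h\|_{V_h}$, which is \eqref{eq:a_posteriori} after dividing by $1-\delta>0$.

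I do not anticipate a genuine obstacle here: the result is essentially a bookkeeping exercise combining inf-sup stability, the two governing discrete problems, and Cauchy--Schwarz. The only point requiring a little care is the algebraic manipulation identifying $b_h(\theta_h-u_h,v_h)$ with $(\varepsilon_h,v_h)_{V_h}$ — one must be sure to use the \emph{discrete} consistency of $\theta_h$ (equation \eqref{eq:vf_discrete}, not the continuous problem \eqref{eq:consistency}) so that everything stays inside $V_h$ where the inner product and the inf-sup condition live. It is also worth noting explicitly that $\theta_h$ is well-defined: Assumptions~\ref{as:inf-sup}--\ref{as:bound} give existence and uniqueness of $\theta_h$ via Theorem~\ref{thrm:well-posedness}, and of $(\varepsilon_h,u_h)$ via Theorem~\ref{th:FEMwDG}, so the statement is not vacuous. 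No regularity on $u$ beyond $u\in X_\#$ is needed for \eqref{eq:bnd_uh-thetah}; the exact solution only enters through the saturation hypothesis in \eqref{eq:a_posteriori}.
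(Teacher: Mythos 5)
Your proof is correct and follows essentially the same route as the paper: the paper obtains \eqref{eq:bnd_uh-thetah} by noting $\|B_h(\theta_h-u_h)\|_{V_h^\ast}=\|R_{V_h}^{-1}(l_h-B_hu_h)\|_{V_h}=\|\varepsilon_h\|_{V_h}$ and invoking the inf-sup condition, which is exactly your bilinear-form computation $b_h(\theta_h-u_h,v_h)=(\varepsilon_h,v_h)_{V_h}$ written at the operator level via the Riesz isometry. The second part (triangle inequality, saturation, absorption of the $\delta$-term) coincides with the paper's argument.
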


\begin{proof}
  We observe that
  \zzz{
  \begin{eqnarray*}
    \| B_h(\theta_h-u_h) \|_{V_h^\ast} = \|R^{-1}_{V_h} (l_h-B_h u_h)\|_{V_h} = \| \varepsilon_h\|_{V_h},
  \end{eqnarray*}
  }
  which leads to the bound~\eqref{eq:bnd_uh-thetah} owing to the inf-sup condition~\eqref{eq:infsup_h}. Then, invoking the triangle inequality and the saturation assumption leads to
  \[
    \|u-u_h\|_{V_h}\le \|u-\theta_h\|_{V_h} + \zzz{\|\theta_h-u_h\|_{V_h}} \le \delta \|u-u_h\|_{V_h} + \zzz{\|\theta_h-u_h\|_{V_h}}.
\]
Re-arranging the terms and using~\eqref{eq:bnd_uh-thetah} proves the a~posteriori estimate~\eqref{eq:a_posteriori}.
\end{proof}
\tred{%
  Assumption~\ref{as:saturation} states that, when $\theta_h \neq u_h$, $\theta_h$ is a better approximation than $u_h$ to the analytical solution $u$ with respect to the norm $\|\cdot\|_{V_h}$. This is a reasonable assumption since $u_h$ is sought in the subspace $U_h \subset V_h$.  However, in some cases, this assumption may not be satisfied if the mesh is not fine enough, as we show in Section~\ref{sec:num}. This observation motivates us to consider the following weaker version of Assumption~\ref{as:saturation}.
\begin{ssmptn}[Weaker condition]\label{as:weak}
Let $(\varepsilon_h,u_h)\in V_h\times U_h$ solve the saddle-point problem~\eqref{eq:mix_form}. Let $\theta_h\in V_h$ be the unique solution to~\eqref{eq:vf_discrete}. There exists a real number $\delta_w > 0$, uniform with respect to the mesh size, such that $\|u-\theta_h\|_{V_h} \le \delta_w \|\theta_h-u_h\|_{V_h}$.
\end{ssmptn}
A straightforward verification shows that Assumption~\ref{as:saturation} implies Assumption~\ref{as:weak} with constant $\delta_w = \dfrac{\delta}{1-\delta}$, but the converse is true provided $\delta_w <\frac{1}{2}$. Additionally, when Assumption~\ref{as:weak} is satisfied, the following a~posteriori error estimate holds:
\begin{equation} \label{eq:a_posteriori_weak}
\|u-u_h\|_{V_h} \le \dfrac{1+\delta_w}{C_{\emph{sta}}} \|\varepsilon_h\|_{V_h}.
\end{equation} 
 }
\begin{rmrk}[$U_h=V_h$]
In the particular case where $U_h=V_h$, one readily verifies that the unique solution to the saddle-point problem~\eqref{eq:mix_form} is the pair $(0,\theta_h)$, where $\theta_h\in V_h$ is the unique solution to~\eqref{eq:vf_discrete}. In this situation, the bound~\eqref{eq:bnd_uh-thetah} is not informative.
\end{rmrk}

\begin{rmrk}[LS-FEM]
Assume that $V_h:=\Pol^p(\D_h)$, $p\ge1$, is the broken polynomial space defined in~\eqref{eq:def_bk_pol} and that $U_h$ is the $H^1$-conforming subspace $U_h:=V_h\cap H^1(\D)$. Assume that $V_h$ is equipped with the $L^2$-norm and that the inf-sup condition~\eqref{eq:infsup_h} holds true (in the examples we have in mind, for example, a first-order PDE such as the advection-reaction equation described in Section~\ref{sec:AR}, the discrete bilinear form is $L^2$-coercive). Then, the residual minimization problem~\eqref{eq:min_prob} coincides with the Least-Squares Finite Element Method (LS-FEM) set in $L^2(\D)$, and the residual representative $\varepsilon_h\in V_h$ is the $L^2$-projection onto $V_h$ of the finite element residual. Unfortunately, the $L^2$-norm is too weak, leading to an error estimate~\eqref{eq:apriori} that is suboptimal, typically by one order in the mesh size (i.e., of the form $Ch^p|u|_{H^{p+1}(\D)}$). To remedy this difficulty, we shall equip $V_h$ with a stronger norm inspired by the DG method, thereby leading to (asymptotic) quasi-optimality in~\eqref{eq:apriori}, that is, an upper bound of the form $Ch^{p+\frac12}|u|_{H^{p+1}(\D)}$.
\end{rmrk}

\section{Model problem: Advection-reaction equation}
\label{sec:AR}

In this section, we present examples of the above formulations in the context of the advection-reaction model problem. 

\subsection{Continuous setting} \label{sec:AR_setting}

Let $\D \subset \R^d$, with $d = 1,2,3$, be an open, bounded Lipschitz polyhedron with boundary $\Gamma :=\partial \D$ and outward unit normal $\Vn$. Let $\gamma \in L^\infty(\D)$ denote a bounded reaction coefficient, and let $\Vb \in L^\infty(\D;\R^d)$ denote velocity field such that $\nabla\cdot\Vb \in L^\infty(\D)$.  Assume that the boundary $\Gamma$ can be split into the three subsets $\Gamma^{\pm} := \{ \Vx \in \Gamma \, : \, \pm \Vb(\Vx) \cdot \Vn(\Vx) < 0\}$ (inflow/outflow) and $\Gamma^0 := \{ \Vx \in \Gamma \, : \, \Vb(\Vx) \cdot \Vn(\Vx) = 0\}$ (characteristic boundary).  Finally, let $f \in L^2(\D)$ denote a source term and $g \in L^2\left(|\Vb\cdot \Vn| ; \Gamma\right)$ denote a boundary datum, where
\begin{equation}\label{eq:L2border}
  L^2\left(|\Vb\cdot \Vn| ; \Gamma\right) := \left\{ v \text{ is
      measurable on } \Gamma \, : \, \int_\Gamma |\Vb\cdot \Vn| v^2 \,
    d\Gamma < \infty \right\}.
\end{equation}
The advection-reaction problem reads:
\begin{align}\label{eq:adv-reac}
  \left\{\begin{array}{l}
           \text{Find } u  \text{ such that:} \smallskip \\
           \begin{array}{rl}
             \Vb \cdot \nabla u + \gamma\, u = f & \text{ in } \D, \smallskip\\
             u  = g & \text{ on } \Gamma^-.
           \end{array}
         \end{array}\right.
\end{align}
Consider the graph space
$V:=\left\{v \in L^2(\D) \, : \, \Vb \cdot \nabla v \in L^2(\D) \right\}$
equipped with the inner product 
$(z,v)_V := (z,v)_{\D} + \left(\Vb \cdot \nabla z, \Vb \cdot \nabla v \right)_{\D}$ leading to the norm such that $\|v\|^2_{V} := (v,v)_V$. Then, $V$ is a Hilbert space. Moreover, assuming that $\Gamma^-$ and $\Gamma^+$ are well separated, that is, $d(\Gamma^-,\Gamma^+)>0$, traces are well-defined over $V$, in the sense that the operator:
\begin{equation}
  \begin{array}{rcl}
    \tau &:& C^0(\overline{\D}) \rightarrow L^2\left(|\Vb\cdot \Vn| ; \Gamma\right)\\
         && v \rightarrow \tau(v):=v|_{\Gamma},
  \end{array}
\end{equation}
extends continuously to $V$ (cf.,~\cite{ Ern2006}).

One possible weak formulation of~\eqref{eq:adv-reac} with weak imposition of boundary condition reads:
\begin{equation}\label{eq:cont_VF_adv}
\left\{\begin{array}{l}
\text{Find } u \in V, \text{ such that:} \smallskip\\ 
b(u,v) = (f,v)_{\D} + \left<(\Vb \cdot \Vn)^\ominus g\, , \, v \right>_\Gamma, \quad \forall \, v \in V,
\end{array} \right.
\end{equation}
with $(\cdot)^{\ominus}$ denoting the negative part such that $x^\ominus := \frac{1}{2}\left(|x|-x\right)$ for any real number $x$, and the continuous bilinear form $b$ is such that:
\begin{equation}\label{eq:cont_bil_adv}
  \displaystyle b(z,v) := \left(\Vb \cdot \nabla z + \gamma\, z \, , \, v \right)_{\D} + \left< (\Vb \cdot \Vn)^\ominus \, z \, , \, v\right>_{\Gamma}.
\end{equation}
The model problem~\eqref{eq:cont_VF_adv} is well-posed under some mild assumptions on $\gamma$ and $\Vb$ (see~\zzz{\cite{ Ern2006, CanCR2017} for details}).  \zzz{The problem is well-posed for specific right-hand sides of the form~\eqref{eq:cont_VF_adv} (and not for any right-hand side in $V^*$). That is, the} well-posedness is not established by reasoning directly on~\eqref{eq:cont_VF_adv}, but on an equivalent weak formulation where the boundary condition is strongly enforced by invoking a surjectivity property of the trace operator.

\subsection{DG discretization}

Recall the discrete setting from Section~\ref{sec:discrete}. For a~polynomial degree $p\ge0$, the standard DG discretization of the model problem~\eqref{eq:cont_VF_adv} is of the form:
\begin{equation}\label{eq:VF_adv-reac}
  \left\{\begin{array}{l}
           \text{Find } \theta_h \in V_h := \Pol^p(\D_h), \text{ such that:} \\
           \displaystyle b^{\textrm{dg}}_h(\theta_h,v_h):= b_h(\theta_h,v_h) + p_h(\theta_h,v_h) = l_h(z_h), \quad \forall \, v_h \in V_h,
         \end{array} \right.
\end{equation}
where $b_h(\cdot,\cdot)$ and $p_h(\cdot,\cdot)$ are the bilinear forms over $V_h \times V_h$ such that
\begin{align}
    b_h(z_h,v_h) :=&   \sum_{m=1}^N(\Vb \cdot \nabla  z_h + \gamma \,  z_h\, , \, v_h)_{K_m} + \sum_{e \in \Sk^\partial_h}\left< ( \Vb \cdot \Vn_e)^\ominus z_h \, , \, v_h \right>_e \\ &  - \sum_{e \in \Sk^0_h} \left<(\Vb \cdot \Vn_e) \, \llbracket z_h \rrbracket \, , \, \llbrace v_h \rrbrace \right>_e,
\end{align}
and
\begin{equation}
\displaystyle p_h(z_h,v_h) :=  \dfrac{\eta}{2}\sum_{e \in \Sk^0_h} \left<\left|\Vb \cdot \Vn_e \right|  \llbracket z_h\rrbracket, \llbracket v_h \rrbracket \right>_e,
\end{equation}
where $\eta\ge0$ is the penalty parameter, and $l_h(\cdot)$ is the linear form over $V_h$ such that
\begin{equation}
l_h(v_h) := \sum_{m=1}^N (f, v_h)_{K_m} + \sum_{e \in \Sk_h^\partial}\left<(\Vb \cdot \Vn_e)^\ominus\, g \, , \, v_h\right>_e .
\end{equation}
The choice $\eta=0$ corresponds to the use of centered fluxes, and the choice $\eta>0$ (typically $\eta=1$) to the use of upwind fluxes, see~\cite{ BrMaS:04, di2011mathematical}.  To verify that the assumptions~\ref{as:inf-sup}-\ref{as:bound} are satisfied, we need to make a reasonable regularity assumption on the exact solution.

\begin{ssmptn}(Partition of ~$\D$ and regularity of exact solution $u$) \label{as:reg_u} 
There is a~partition $\Par_\D = \bigcup_{i = 1}^{N_\D} \D_i $ of $\D$ into open disjoint polyhedra $\D_i$ such that: 
\begin{enumerate}[(i)]
\item The inner part of the boundary of $\D_i$ is characteristic with respect to the advective field, that is, $\Vb(\Vx) \cdot \Vn_{\D_i}(\Vx) = 0$ for all $\Vx\in \partial\D_i\cap\D$ and all $i\in\{1,\ldots,N_\D\}$, where $\Vn_{\D_i}$ is the unit outward normal to $\D_i$; 
\item The exact solution is such that
  \begin{equation} \label{eq:def_Xsharp_AR}
    u \in X_\# := V \cap H^s(\Par_\D), \quad s>\frac12.
  \end{equation}
\item The mesh is aligned with this partition, that is, any mesh cell belongs to one and only one subset $\D_i$.
\end{enumerate}

\end{ssmptn}
Assumption~\ref{as:reg_u} \zzz{ensures} that the trace of $u$ is meaningful on the boundary of each mesh cell, and that $u$ can only jump across those interfaces that are subsets of some $\partial \D_i\cap \D$ with $i\in\{1,\ldots,N_\D\}$. In the case where $u$ has no jumps, we can broadly assume that $\Par_\D=\D$, in which case the statement of Assumption~\ref{as:reg_u}-(i) is void.

\zzz{We consider the following two norms with which to equip the broken polynomial space $V_h$:} 
\begin{equation}\label{eq:adv_norms}
  \begin{aligned}
    \| w \|^2_{\textrm{cf}} &:=  \|w\|^2_{\D} + \frac12 \left\||\Vb \cdot \Vn|^{\frac12} \, w\right\|_{\Gamma}^2 \, , \\
    \| w \|^2_{\textrm{up}} &:=  \| w \|^2_{\textrm{cf}} + \sum_{e \in \Sk_h^0} \dfrac{\eta}{2} \left\| \, |\Vb \cdot \Vn_e|^{\frac12} \llbracket w \rrbracket \right\|_e^2 + \sum_{m=1}^N  h_{K_m} \|\ \Vb \cdot \nabla w\|^2_{K_m}\, ,
  \end{aligned}
\end{equation}
\zzz{where $\| \cdot\|_{\textrm{cf}}$ relates to the central fluxes and $\| \cdot\|_{\textrm{up}}$ relates to the upwind fluxes.} In view of Assumption~\ref{as:bound}, we define $V_{h,\#}:=X_\#+V_h$ with $X_\#$ \zzz{as} in~\eqref{eq:def_Xsharp_AR}, and we consider the following extensions of the above norms:
\begin{equation}
\begin{aligned}
\| w \|^2_{\textrm{cf},\#} &:=  \| w \|^2_{\textrm{cf}} + \sum_{m=1}^N \left(\|\Vb \cdot \nabla w\|^2_{K_m} +  h_{K_m}^{-1}\|w\|^2_{\partial K_m} \right)\,,  \\
\| w \|^2_{\textrm{up},\#} &:=  \| w \|^2_{\textrm{up}} + \sum_{m=1}^N \left( h_{K_m}^{-1}\|w\|^2_{K_m} + \|w\|^2_{\partial K_m} \right)\,.
\end{aligned}
\end{equation}
For the proof of the following result, we refer the reader to~\cite{ di2011mathematical}.

\begin{prpstn} \label{prop:adv_coer} (\zzz{Assumption verification}) 
In the above framework, Assumptions~\ref{as:inf-sup}-\ref{as:bound} hold true in the following situations: \textup{(i)} $\eta=0$ (centered fluxes) and the norms $\| \cdot \|_{\emph{cf}}$ and $\| \cdot \|_{\emph{cf},\#}$;
\textup{(ii)} $\eta>0$ (upwind fluxes) and the norms $\| \cdot \|_{\emph{up}}$ and $\| \cdot \|_{\emph{up},\#}$.
\end{prpstn}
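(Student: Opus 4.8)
The plan is to check Assumptions~\ref{as:inf-sup}--\ref{as:bound} for the discrete bilinear form $b^{\textrm{dg}}_h=b_h+p_h$, treating the two cases ($\eta=0$ with the norms $\|\cdot\|_{\textrm{cf}}$, $\|\cdot\|_{\textrm{cf},\#}$; $\eta>0$ with $\|\cdot\|_{\textrm{up}}$, $\|\cdot\|_{\textrm{up},\#}$) in parallel, since $p_h$ only adds a nonnegative jump term and the upwind norms only add that jump term and the streamline term $\sum_m h_{K_m}\|\Vb\cdot\nabla w\|^2_{K_m}$ to the centered ones. I work under the standard mild positivity hypothesis $\gamma-\tfrac12\,\nabla\cdot\Vb\ge\gamma_0>0$ a.e.\ in $\D$ (under which~\eqref{eq:cont_VF_adv} is well posed), mesh shape-regularity (hence local trace and inverse inequalities on $\Pol^p$), and enough regularity of $\Vb$ to control the $L^2$-projection defect of $\Vb\cdot\nabla w_h$. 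For the strong consistency of Assumption~\ref{as:reg-consi}: taking $u\in X_\#=V\cap H^s(\Par_\D)$, $s>\tfrac12$, its traces are in $L^2$ on $\Gamma$ and on every mesh face; by Assumption~\ref{as:reg_u}, $\llbracket u\rrbracket_e=0$ on every interior face outside $\bigcup_i\partial\D_i\cap\D$, while on those faces $\Vn_e=\pm\Vn_{\D_i}$, so $\Vb\cdot\Vn_e=0$ there; hence $(\Vb\cdot\Vn_e)\llbracket u\rrbracket_e=0$ and $|\Vb\cdot\Vn_e|\,\llbracket u\rrbracket_e=0$ on \emph{every} interior face, so all interior-face terms of $b_h$ and all of $p_h$ vanish at $u$. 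Integrating the PDE elementwise gives $\sum_m(\Vb\cdot\nabla u+\gamma u,v_h)_{K_m}=(f,v_h)_\D$, and $(\Vb\cdot\Vn_e)^\ominus$ equals $|\Vb\cdot\Vn_e|$ on $\Gamma^-$ (where $u=g$) and $0$ elsewhere; collecting terms yields $b^{\textrm{dg}}_h(u,v_h)=l_h(v_h)$ for all $v_h\in V_h$, i.e.~\eqref{eq:consistency}.

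\emph{Stability (Assumption~\ref{as:inf-sup}).} I would first compute $b^{\textrm{dg}}_h(w_h,w_h)$ for $w_h\in V_h$: an elementwise integration by parts of $\sum_m(\Vb\cdot\nabla w_h,w_h)_{K_m}$, together with $(w_h^+)^2-(w_h^-)^2=2\,\llbracket w_h\rrbracket\llbrace w_h\rrbrace$ on interior faces, makes the interior-face contribution cancel \emph{exactly} against the centered flux term $-\sum_{e\in\Sk^0_h}\langle(\Vb\cdot\Vn_e)\llbracket w_h\rrbracket,\llbrace w_h\rrbrace\rangle_e$, while on boundary faces $\tfrac12(\Vb\cdot\Vn_e)+(\Vb\cdot\Vn_e)^\ominus=\tfrac12|\Vb\cdot\Vn_e|$. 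This leaves $b^{\textrm{dg}}_h(w_h,w_h)\ge \gamma_0\|w_h\|^2_{\D}+\tfrac12\| \,|\Vb\cdot\Vn|^{1/2}w_h\|^2_{\Gamma}+\tfrac{\eta}{2}\sum_{e\in\Sk^0_h}\| \,|\Vb\cdot\Vn_e|^{1/2}\llbracket w_h\rrbracket\|^2_e$. For $\eta=0$ the right-hand side is $\gtrsim\|w_h\|^2_{\textrm{cf}}$, i.e.\ $b^{\textrm{dg}}_h$ is coercive in $\|\cdot\|_{\textrm{cf}}$, which gives~\eqref{eq:infsup_h} at once. For $\eta>0$ this controls every term of $\|w_h\|^2_{\textrm{up}}$ except the streamline term, so coercivity alone is insufficient; I would recover the inf-sup condition by testing against $v_h:=w_h+\delta\sum_m h_{K_m}\pi_h(\Vb\cdot\nabla w_h)\in V_h$, with $\pi_h$ the elementwise $L^2$-projection onto $V_h$ and $\delta>0$ small and mesh-independent. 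A direct computation gives $b^{\textrm{dg}}_h(w_h,v_h)\ge b^{\textrm{dg}}_h(w_h,w_h)+\delta\sum_m h_{K_m}\|\Vb\cdot\nabla w_h\|^2_{K_m}-(\textrm{cross terms})$, where the cross terms (reaction, interior-face couplings, projection defect of $\Vb\cdot\nabla w_h$) are absorbed, for $\delta$ small enough, into the coercive part and into the streamline term via Young's inequality; since inverse inequalities also give $\|v_h\|_{\textrm{up}}\lesssim\|w_h\|_{\textrm{up}}$, the inf-sup condition~\eqref{eq:infsup_h} follows.

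\emph{Boundedness (Assumption~\ref{as:bound}).} Property (i) ($\|w\|_{\textrm{cf}}\le\|w\|_{\textrm{cf},\#}$ and $\|w\|_{\textrm{up}}\le\|w\|_{\textrm{up},\#}$) is immediate from the definitions. For~\eqref{eq:continuity} I would bound $b^{\textrm{dg}}_h(z,v_h)$, $(z,v_h)\in V_{h,\#}\times V_h$, term by term with Cauchy--Schwarz: the volume term against $\big(\sum_m\|\Vb\cdot\nabla z\|^2_{K_m}\big)^{1/2}$ and $\|\gamma\|_{L^\infty}\|z\|_{\D}$, both present in the $\#$-norm of $z$, times $\|v_h\|_{\D}$ (in the upwind case after an elementwise integration by parts that moves the streamline derivative onto $v_h$, where it is controlled by $\|\cdot\|_{\textrm{up}}$ at the cost of the $h_{K_m}^{-1}\|z\|^2_{K_m}$ term); the boundary term against $\| \,|\Vb\cdot\Vn|^{1/2}z\|_{\Gamma}\le\sqrt2\,\|z\|_{\textrm{cf}}$ and $\| \,|\Vb\cdot\Vn|^{1/2}v_h\|_{\Gamma}\le\sqrt2\,\|v_h\|_{\textrm{cf}}$; and the interior-face term (and, for $\eta>0$, the penalty term) by writing $\|\llbracket z\rrbracket\|_e\lesssim h_e^{1/2}\big(h_{K^+}^{-1/2}\|z\|_{\partial K^+}+h_{K^-}^{-1/2}\|z\|_{\partial K^-}\big)$, controlled by the $h_{K_m}^{-1}\|z\|^2_{\partial K_m}$ term of the $\#$-norm, against $\|\llbrace v_h\rrbrace\|_e\lesssim h_e^{-1/2}\|v_h\|_{K^\pm}$ from the discrete trace inequality, so that the $h$-powers cancel. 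A discrete Cauchy--Schwarz over faces and cells and the bounded overlap of the partition then give~\eqref{eq:continuity} with $C_{\textrm{bnd}}$ independent of the mesh size.

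\emph{Main obstacle.} The consistency and boundedness arguments are routine once the norms are fixed; the delicate step is the inf-sup bound for upwind fluxes, the only place requiring a streamline-diffusion-type test function. The subtle points there are choosing the perturbation parameter $\delta$ uniformly in the mesh, showing that all cross terms and the $L^2$-projection defect of $\Vb\cdot\nabla w_h$ can be absorbed, and verifying that the trace/inverse-inequality constants (hence shape-regularity and suitable regularity of $\Vb$) do not spoil mesh-independence. Everything else is the standard DG bookkeeping of~\cite{di2011mathematical}, to which we ultimately refer.
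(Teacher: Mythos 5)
Your proposal is correct and takes essentially the same route as the paper, which does not give its own argument but delegates the proof to the standard DG analysis of~\cite{di2011mathematical}: coercivity of $b_h^{\textrm{dg}}$ in the $\|\cdot\|_{\textrm{cf}}$-norm, the discrete inf-sup condition for the $\|\cdot\|_{\textrm{up}}$-norm via the streamline-perturbed supremizer $w_h+\delta\sum_m h_{K_m}\pi_h(\Vb\cdot\nabla w_h)$, and the usual consistency and boundedness bookkeeping under Assumption~\ref{as:reg_u}. Your added positivity hypothesis $\gamma-\tfrac12\nabla\cdot\Vb\ge\gamma_0>0$ is exactly the standard framework under which that reference establishes these properties, so it is consistent with the proposition as stated.
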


\begin{rmrk}(Coercivity) 
The case of centered fluxes in Proposition~\ref{prop:adv_coer} leads to stability in the form of coercivity, whereas the case of upwind fluxes leads to an inf-sup condition, \zzz{which implies a stronger norm control} than with centered fluxes \zzz{since $\| \cdot\|_{\textrm{up}} \geq \| \cdot\|_{\textrm{cf}}$}.
\end{rmrk}

An immediate consequence of Theorem~\ref{thrm:well-posedness} is the bound
\begin{equation}\label{eq:adv_apriori}
\inf_{y_h \in V_h} \| u-y_h\|_{V_h} \le \| u - \theta_h\|_{V_h} \leq C \inf_{y_h \in V_h} \| u-y_h\|_{V_h,\#},
\end{equation}
with $C$ uniform with respect to the mesh size, where the lower bound is trivial \zzz{since} $\theta_h\in V_h$. \zzz{The following terminology is useful to compare} the decay rates with respect to the mesh size for the best-approximation errors of smooth solutions in the norms $\|\cdot\|_{V_h}$ and $\|\cdot\|_{V_h,\#}$. The estimate~\eqref{eq:adv_apriori} is suboptimal if the left-hand side \zzz{decays faster than} the right-hand side\zzz{. Also, the estimate is} (asymptotically) quasi-optimal when both sides decay at the same rate. The inequality in~\eqref{eq:adv_apriori} is suboptimal \zzz{for the} centered fluxes (\zzz{i.e.,} the left-hand side decays at a rate of order $h^{p+\frac12}$ while the right-hand side decays at rate $h^p$), and it is (asymptotically) quasi-optimal \zzz{for the} upwind fluxes (\zzz{i.e.,} both sides decay at a rate of order $h^{p+\frac12}$). \zzz{For further details see~\cite{ di2011mathematical}}.

\begin{rmrk}[Continuous trial space]
  If we set $U_h:=V_h\cap H^1(\D)$, that is, if $U_h$ is composed of continuous, piecewise polynomial functions, the discrete bilinear form $b_h(\cdot,\cdot)$ reduces on $U_h\times V_h$ to 
  \[
    b_h(z_h,v_h) = (\Vb \cdot \nabla  z_h + \gamma \,  z_h\, , \, v_h)_{\D} + \sum_{e \in \Sk^\partial_h}\left< ( \Vb \cdot \Vn_e)^\ominus z_h \, , \, v_h \right>_e,
  \]
  since functions in $U_h$ have \zzz{no} jumps across the mesh interfaces, and their piecewise gradient coincides with their weak gradient over $\D$.
\end{rmrk}

\begin{rmrk}[Best approximation] \label{rem:best_app}
  An interesting property of the present setting regarding the approximation capacity of the discrete spaces $U_h$ and $V_h$ is that, for all $v\in H^s(\D)$, $s>\frac12$,
  \[
    \inf_{v_h\in U_h} \|v-v_h\|_{\textrm{up},\#} \le C_{\rm app} \, \inf_{v_h\in V_h} \|v-v_h\|_{\textrm{up},\#},
  \]
  with $C_{\rm app}$ uniform with respect to the mesh size (see~\ref{ap:best_proof}). The converse bound is trivially satisfied with constant equal to $1$ since $U_h\subset V_h$.
\end{rmrk}

\section{Numerical examples} \label{sec:num}

In this section, we present the 2D and 3D test cases, cover some implementation aspects, and discuss \zzz{some} numerical results.
 
\subsection{Model problems}
\subsubsection{2D model problem}
We consider \tred{a~pure advection}  problem~\eqref{eq:adv-reac} ($\gamma = 0$) over the unit square $\D = (0 \, , \, 1)^2 \subset \R^2$, with a constant velocity field $\Vb = (3 \, , \, 1)^T$ (see Figure~\ref{fig:adv_domain}). We consider the source term $f = 0$ in $\D$ and, for a~parameter $M>0$, an inflow boundary datum $g=u_{M|\Gamma^-}$, where
$\Gamma^-=\{(0,y),y\in(0,1)\}\cup\{(x,0),x\in(0,1)\}$, and the exact solution $u_M$ is 
\begin{equation}\label{eq:adv_anal}
  u_M(x_1,x_2) = 1+\tanh\left(M \left(x_2-\dfrac{x_1}{3} - \dfrac{1}{2}\right) \right).
\end{equation}
The parameter $M$ tunes the width of the inner layer along the line of equation $x_2 = \dfrac{x_1}{3} + \dfrac{1}{2}$. The limit value as $M$ grows becomes
\begin{equation}\label{eq:u_infty}
  u_\infty(x_1,x_2) := \lim_{M\rightarrow +\infty} u_M(x_1,x_2) = 1+\sign\left(x_2 - \dfrac{x_1}{3} - \dfrac{1}{2}\right).
\end{equation}
\begin{figure}[t!]
    \centering
    \begin{subfigure}[b]{0.47\textwidth}
        \includegraphics[width=0.8\textwidth]{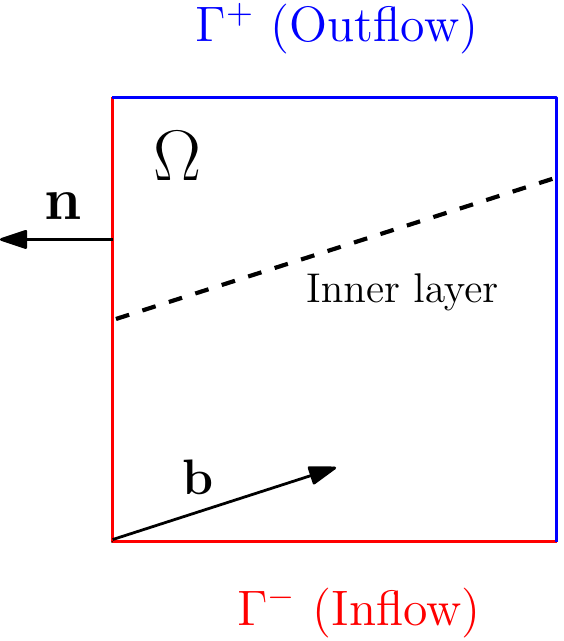}
        \caption{2D problem setting}
        \label{fig:adv_domain}
    \end{subfigure}
    \hspace{0.3cm}
    \begin{subfigure}[b]{0.37\textwidth}
    \includegraphics[width=0.8\textwidth]{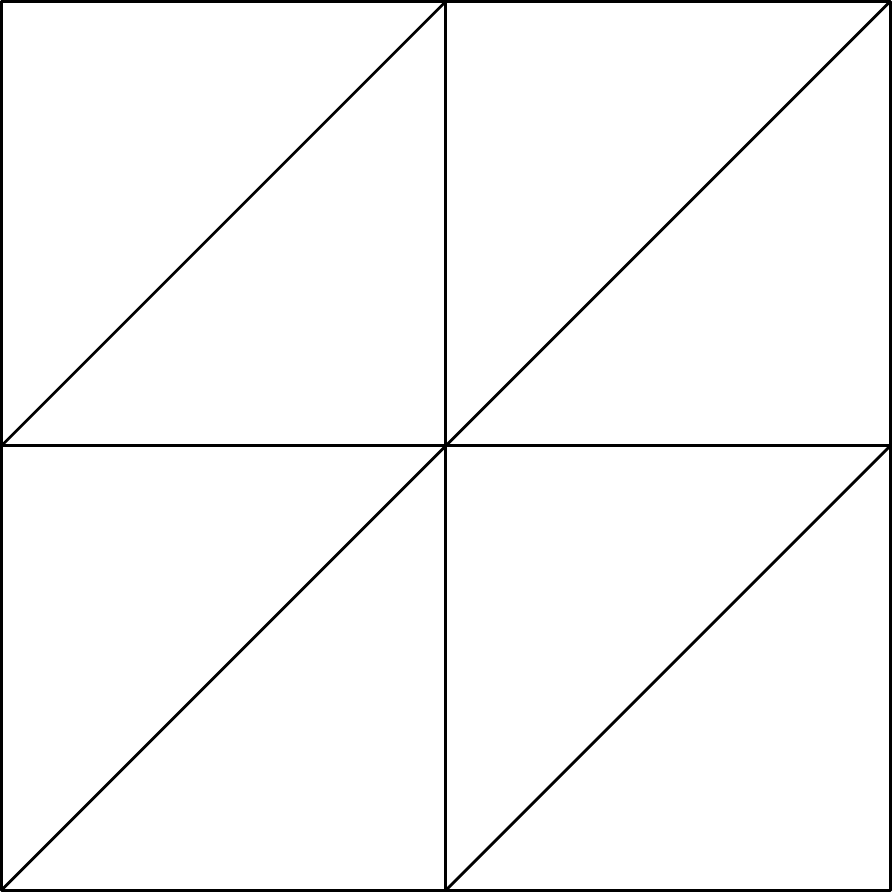}
    \vspace{0.8cm}
        \caption{2D initial mesh}
        \label{fig:ini_mesh}
    \end{subfigure} 
    \caption{2D model problem and initial mesh}
    \label{fig:data_set}
\end{figure}
\begin{figure}[h!]
     \begin{subfigure}[b]{0.48\textwidth}
        \includegraphics[angle=0,width=\textwidth]{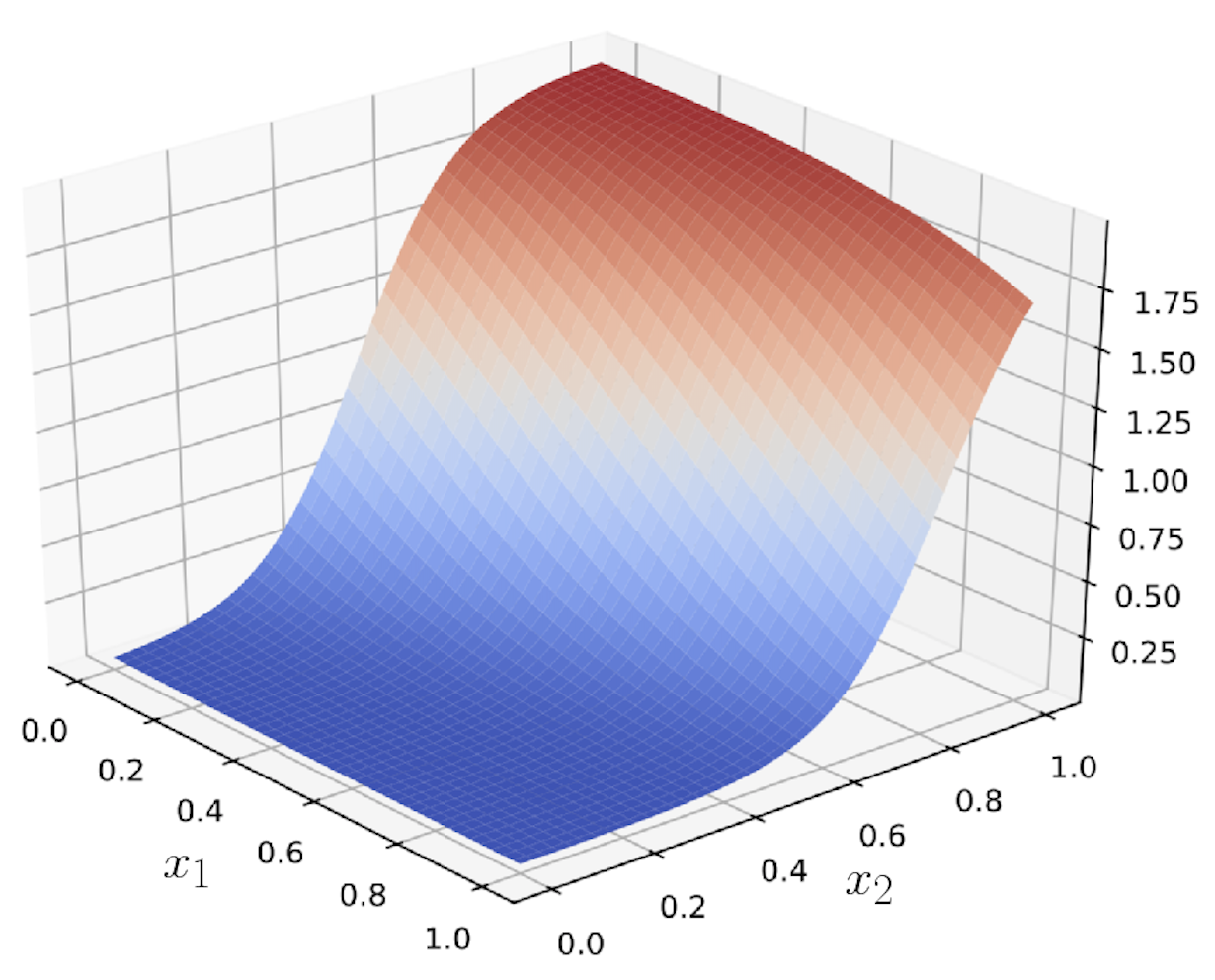}
        \caption{2D exact solution for $M=5$}
        \label{fig:anal_sol_M5}
     \end{subfigure}
     %
    \begin{subfigure}[b]{0.48\textwidth}
        \includegraphics[width=\textwidth]{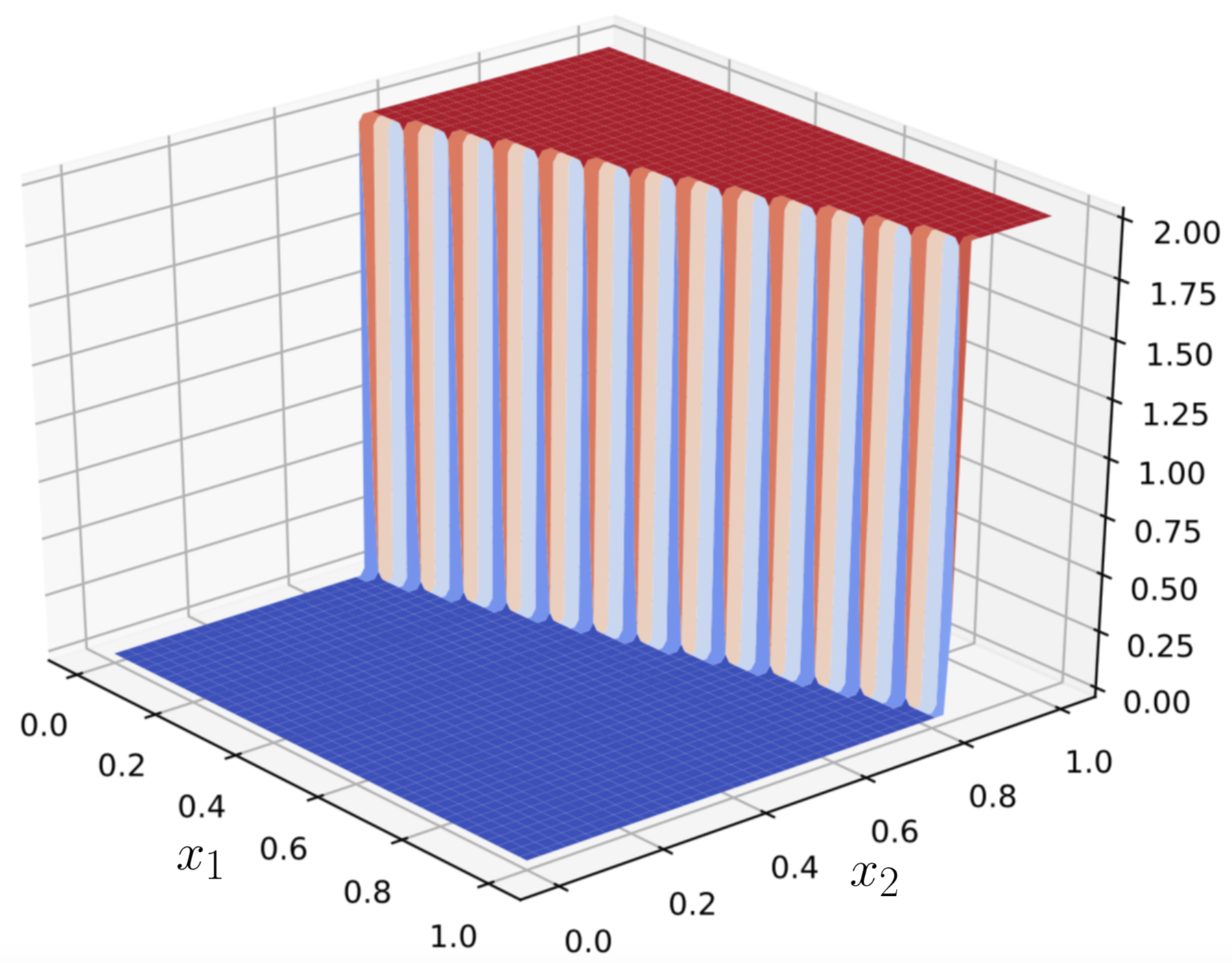}
        \caption{2D exact solution for $M=500$}
        \label{fig:anal_sol_M500}
    \end{subfigure}
  \caption{2D exact solutions for $M=5$ and $M=500$}  
  \label{fig:anal_sol}
\end{figure}
The inner layer can be seen in Figure~\ref{fig:anal_sol_M500} for the case $M=500$ (compare with Figure~\ref{fig:anal_sol_M5} for the case $M=5$). Although the inflow and outflow boundaries are not well-separated here, the exact solution matches the partition and regularity assumption~\ref{as:reg_u}. Indeed, $\Par_\D = \Omega$ when $M<\infty$, whereas $\Par_\D$ is composed of two subsets with the common characteristic interface $\{(x_1,x_2)\in \D\;|\; x_2-\frac{x_1}{3}-\frac12=0\}$ when $M=\infty$ (see Figure~\ref{fig:adv_domain}). In addition, the absence of a reactive term precludes the straightforward derivation of $L^2$-stability by a coercivity argument. However, $L^2$-stability is recovered via an inf-sup argument which remains valid whenever the advective field is filling (which \zzz{is the} case for a constant field across a square domain); we refer the reader to~\cite{ devinatz1974asymptotic, azerad1996inegalite, cantin2017edge, ayuso2009discontinuous} for further insight. 

\subsubsection{3D model problem}

\begin{figure}[t!]
    \centering
    \begin{subfigure}[b]{0.45\textwidth}
        \includegraphics[width=\textwidth]{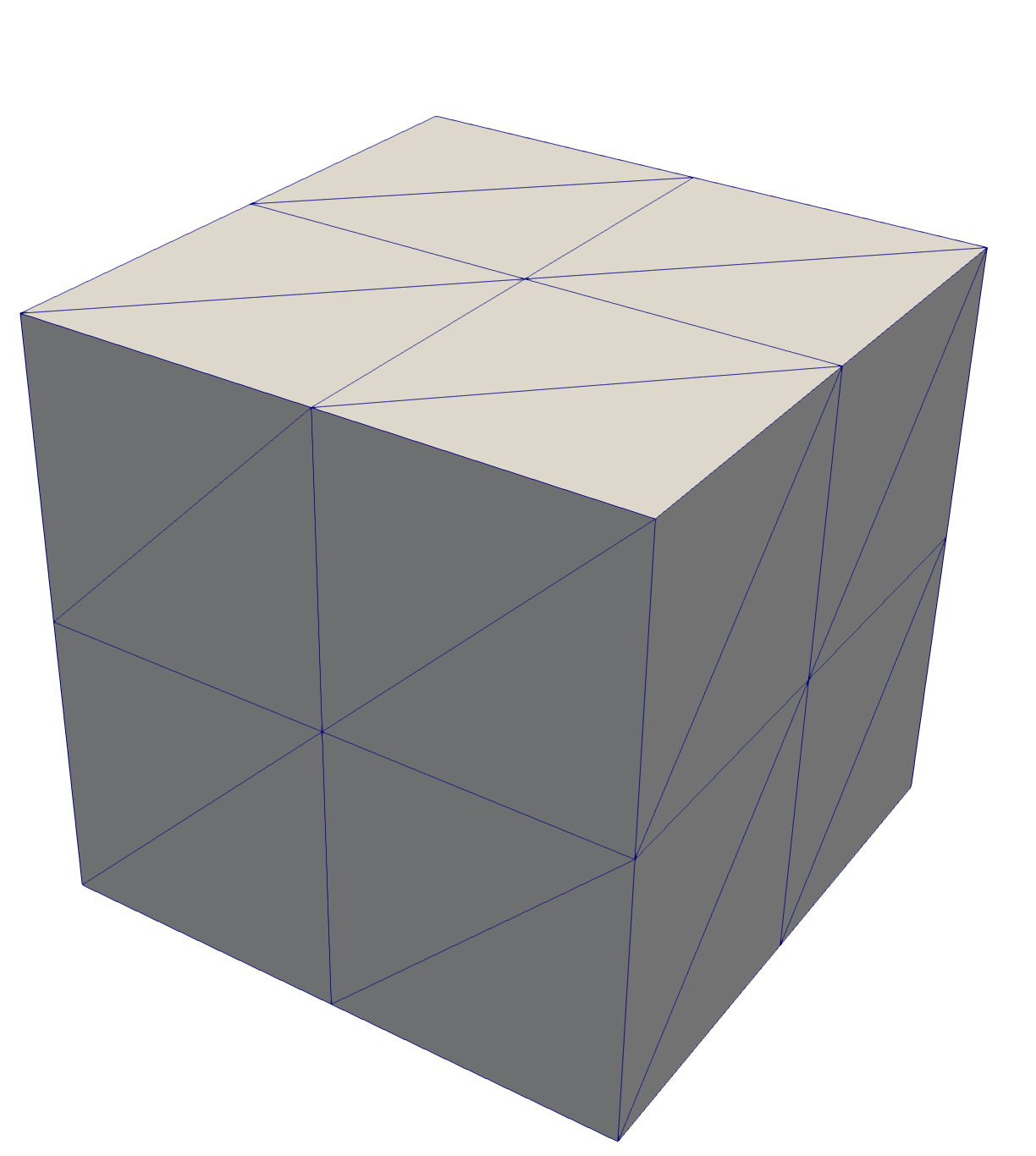}
        \caption{3D initial mesh}
        \label{fig:3d_mesh}
    \end{subfigure}
    \hspace{0.4cm}
   \begin{subfigure}[b]{0.47\textwidth}
    \includegraphics[width=\textwidth]{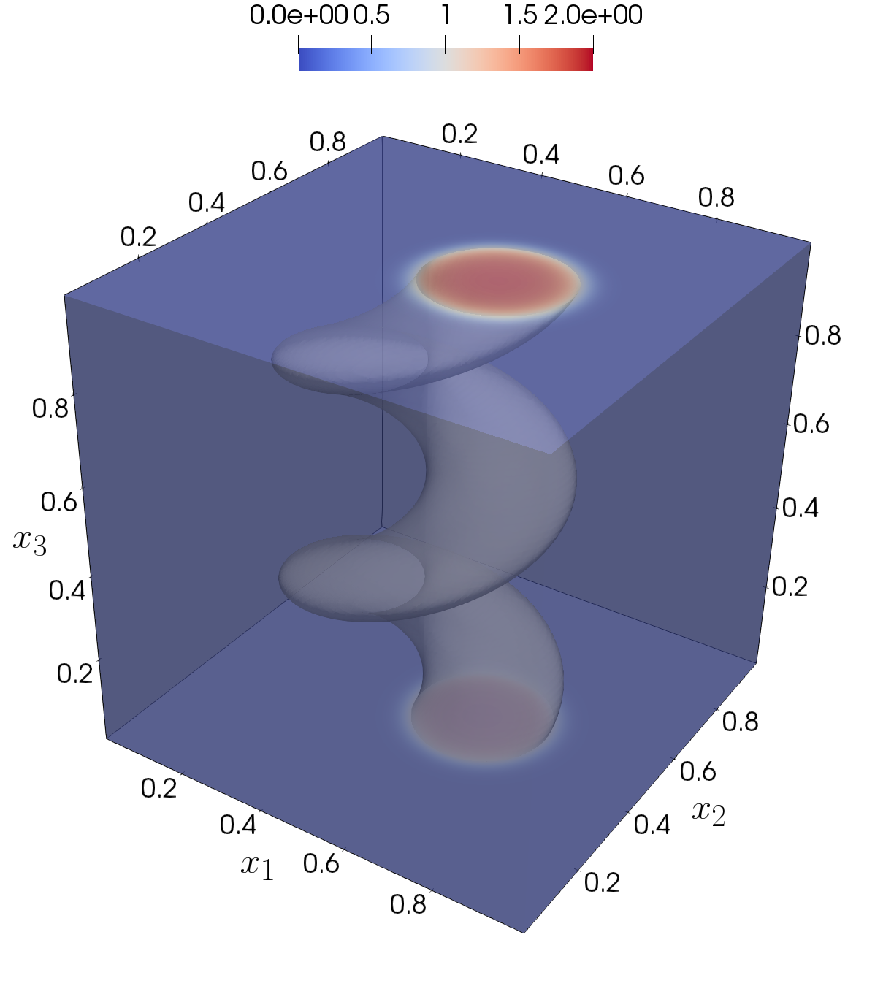}
        \caption{3D exact solution for $M=100$}
        \label{fig:3d_ref_sol}
    \end{subfigure} 
    \caption{3D initial mesh and exact solution}
    \label{fig:3d_data_set}
\end{figure}

We \zzz{again consider a~pure advection problem}~\eqref{eq:adv-reac}, this time over the unit cube $\D = (0 \, , \, 1)^3 \subset \R^3$, the source term $f=0$, the spiral-type velocity field $\Vb(x_1,x_2,x_3) =  (-0.15\,\sin(4\pi x_3) \, , \, 0.15\,\cos(4\pi x_3) \, , \, 1)^T$, and the inflow boundary datum $g = u_M|_{\Gamma^-}$, where the exact solution $u_M$ is given by
\begin{equation}\label{eq:ana_3D}
  u_M(x_1,x_2,x_3) = 1+\tanh\left(M \left( 0.15^2 - \big(x_1- X_1(x_3)\big)^2 - \big(x_2- X_2(x_3)\big)^2 \right) \right), 
\end{equation}
with $X_1(x_3) = 0.15 \, \cos(4\pi x_3) + 0.45$ and $X_2(x_3) = 0.15 \, \sin(4\pi x_3) + 0.5$.  \zzz{As $M$ grows, the limit value becomes:}
\begin{equation}
u_\infty(x_1,x_2,x_3) = 1+\sign\left( 0.15^2 - \big(x_1- X_1(x_3)\big)^2 - \big(x_2- X_2(x_3)\big)^2 \right).
\end{equation}
Figure~\ref{fig:3d_ref_sol} illustrates the inner layer for $M=100$. The inflow boundary corresponds to the following union \zzz{of plane portions:}
\begin{equation}\begin{aligned}
\Gamma^- = {}& \{(x_1,x_2)\in \Gamma_1\cup\Gamma_2,\ x_3\in [0,\tfrac18]\cup[\tfrac12,\tfrac58]\} \\
& \cup \{(x_1,x_2)\in \Gamma_2\cup\Gamma_3,\ x_3\in [\tfrac18,\tfrac14]\cup[\tfrac58,\tfrac34]\} \\
& \cup \{(x_1,x_2)\in \Gamma_3\cup\Gamma_4,\ x_3\in [\tfrac14,\tfrac38]\cup[\tfrac34,\tfrac78]\} \\
& \cup \{(x_1,x_2)\in \Gamma_4\cup\Gamma_1,\ x_3\in [\tfrac38,\tfrac12]\cup[\tfrac78,1]\}, 
\end{aligned}\end{equation}
where $\Gamma_1=\{x_1\in (0,1), x_2=0\}$, $\Gamma_2=\{x_1=1, x_2\in(0,1)\}$,
$\Gamma_3=\{x_1\in (0,1), x_2=1\}$, and $\Gamma_4=\{x_1=0, x_2=0\}$. As for the 2D model problem, the solution does not satisfy the regularity assumption~\ref{as:reg_u} since the inflow and outflow boundaries are not well separated; moreover, whenever $M=\infty$, the exact solution is piecewise smooth, but the subsets $\D_i$ in the corresponding partition are not polyhedra.

\subsection{Implementation aspects}

We consider the broken polynomial space $V_h := \Pol^p(\D_h)$, with $p=1,2$, defined in~\eqref{eq:def_bk_pol}. \zzz{The $H^1$-conforming} subspace $U_h:=V_h\cap H^1(\D)$\zzz{, which implies that $U_h$ contains the} continuous, piecewise polynomial functions of degree $p=1,2$. Since the trial space for the minimization problem~\eqref{eq:min_prob} is composed of continuous functions, we use the label ``CT'' in our figures. We equip the space $V_h$ with one of the two norms defined in~\eqref{eq:adv_norms}, leading to the labels ``CT-cf'' and ``CT-up''. For \zzz{comparison,} we also compute the DG solution $\theta_h\in V_h$ solving the primal problem~\eqref{eq:vf_discrete}. When reporting the corresponding error, we use the labels ``DT-cf'' and ``DT-up'', where "DT" means discontinuous trial space, and the labels ``cf'' and ``up'' indicate the use of centered fluxes and upwind fluxes (with $\eta=1$), respectively, as well as the norm in which \zzz{we evaluate the error.}  The solution ``CT-cf'' can be loosely interpreted as an LS-FEM solution. Indeed the residual minimization is performed over the $H^1$-conforming finite element space, and the minimizing functional is the $L^2$-norm of the residual in $\D$ supplemented by the $L^2$-norm in $\Gamma^-$ for the residual associated with the \zzz{inflow} boundary condition (see the norm $\|\cdot\|_{\textrm{cf}}$ in~\eqref{eq:adv_norms}).

In all 2D simulations, we start with the uniform triangular mesh shown in Figure~\ref{fig:ini_mesh}, whereas for 3D simulations, we start with the uniform tetrahedral mesh shown in Figure~\ref{fig:3d_mesh}. We produce subsequent mesh refinements under uniform and adaptive criteria. We obtain all the solutions of the saddle-point problem~\eqref{eq:mix_form} and the primal formulation~\eqref{eq:vf_discrete} by using FEniCS~\cite{ alnaes2015fenics}. We show convergence plots of the error measured in the chosen norm of $V_h$ as a function of the number of degrees of freedom (DOFs) (that is, $\dim(U_h) + \dim(V_h)$ for~\eqref{eq:mix_form} and $\dim(V_h)$ for~\eqref{eq:vf_discrete}). 

\subsubsection{Adaptive mesh refinement}

Adaptive mesh refinement is possible when solving~\eqref{eq:mix_form}, \zzz{and the} residual representative $\varepsilon_h\in V_h$ \zzz{drives that process}.  A standard adaptive procedure considers an iterative loop where each step consists of the following four modules:
$$
\text{ SOLVE } \rightarrow \text{ ESTIMATE } \rightarrow \text{ MARK } \rightarrow  \text{ REFINE. } 
$$
\zzz{We apply these four modules as} follows: We first solve the saddle-point problem~\eqref{eq:mix_form}. Then, we compute for each mesh cell $K$, the local error indicator $E_K$ \zzz{defined as:}
\begin{equation}
E_K ^2:= \begin{cases}
E_{\mathrm{cf},K}^2 := \|\varepsilon_h\|_K^2 + \frac12 \Big\||\Vb \cdot \Vn|^{\frac12} \, \varepsilon_h\Big\|_{\Gamma\cap \partial K}^2, 
&\text{if $\| \cdot \|_{V_h} = \| \cdot \|_{\textrm{cf}}$}, \\
E_{\mathrm{cf},K}^2 + \dfrac{\eta}{2} \Big\| \, |\Vb \cdot \Vn|^{\frac12} \llbracket \varepsilon_h \rrbracket \Big\|_{\Sk_h^0\cap\partial K}^2 + h_{K} \|\ \Vb \cdot \nabla \varepsilon_h\|^2_{K},&\text{if $\| \cdot \|_{V_h} = \| \cdot \|_{\textrm{up}}$}.
\end{cases}\end{equation}%
We mark using the D{\"o}rfler bulk-chasing criterion (see~\cite{ dorfler1996convergent}) that marks elements for which the cumulative sum of the local values $E_K$ in a decreasing order remains below a chosen fraction of the total estimated error $\|\varepsilon_h\|_{V_h}$. For the numerical examples, we consider this fraction to be one half and a quarter \zzz{for 2D and 3D,} respectively. Finally, we \zzz{employ} a bisection-type refinement criterion (see~\cite{ bank1983some}) to obtain the refined mesh \zzz{used} in the next step. 

\subsubsection{Iterative solver}

\zzz{The resulting algebraic form of~\eqref{eq:mix_form} becomes}
\begin{equation}
\left(
\begin{array}{cc}
G & B \\
B^\ast & 0
\end{array}
\right) 
\left(
\begin{array}{c}
{\bm \varepsilon}\\
{\bf u}
\end{array}
\right) =
\left(
\begin{array}{c}
\bf{l} \\
{\bf 0} 
\end{array}
\right).
\end{equation}
We \zzz{use} the iterative algorithm proposed in~\cite{ bank1989class}. Denoting by $\widehat{G}$ a~preconditioner for the Gram matrix $G$, and by $\widehat{S}$ a~preconditioner for the reduced Schur complement $B^\ast \widehat{G}^{-1} B$, the iterative scheme \zzz{is}
\begin{equation}\label{eq:iter_solver}
\left(
\begin{array}{c}
{\bm \varepsilon}_{i+1}\\
{\bf u}_{i+1}
\end{array}
\right)
= 
\left(
\begin{array}{c}
{\bm \varepsilon}_i\\
{\bf u}_i
\end{array}
\right)
+ 
\left(
\begin{array}{cc}
\widehat{G} & B \\
B^\ast & \widehat{C}
\end{array}
\right)^{-1} 
\left\{
\left(
\begin{array}{c}
\bf{l} \\
{\bf 0} 
\end{array}
\right)
-
\left(
\begin{array}{cc}
G & B \\
B^\ast & 0
\end{array}
\right) 
\left(
\begin{array}{c}
{\bm \varepsilon}_i\\
{\bf u}_i
\end{array}
\right)
\right\},
\end{equation}
with $\widehat{C} = B^\ast \widehat{G}^{-1} B - \widehat{S}$. Denoting by ${\bf r}_i= {\bf l} - G \, {\bm \varepsilon}_i - B\, {\bf u}_i$ and by ${\bf s}_i = - B^\ast {\bm \varepsilon}_i$ the residuals for ${\bm \varepsilon}$ and ${\bf u }$ at the outer iteration $i$, the scheme requires the resolution of two interior problems for the following increments:
\begin{equation}\label{eq:iter_solver_1}
\eta_{i+1}:={\bf u}_{i+1}-{\bf u}_i = \widehat{S}^{-1}\left(B^\ast\left(\widehat{G}^{-1} {\bf r}_i\right) - {\bf s}_i \right),
\end{equation}
and
\begin{equation}\label{eq:iter_solver_2}
\delta_{i+1}:={\bm\varepsilon}_{i+1}-{\bm\varepsilon}_{i} = \widehat{G}^{-1}\left({\bf r}_i - B \, {\bm\eta}_{i+1} \right).
\end{equation}
In~\cite{ bank1989class}, the authors \zzz{consider} $\widehat{G}$ as a relaxed approximation for the matrix $G$, for instance, a few iterations of the conjugate gradient method. However, in our context, the matrix $G$ plays \zzz{a vital} role in stabilizing the system. Therefore, less accurate representations worsen the conditioning of the reduced Schur complement in~\eqref{eq:iter_solver_1}. \zzz{As a consequence}, we consider one outer iteration \zzz{with} a sparse Cholesky factorization \zzz{of the matrix $G$ as preconditioner (using ``sksparse.cholmod'', see~\cite{ chen2008algorithm}). The} conjugate gradient method (\zzz{from} the Scipy sparse linear algebra package) is the preconditioner $\widehat{S}$. On the coarsest mesh,  we \zzz{set} the initial guess to be zero, whereas, on the subsequent adaptive meshes, this guess becomes the solution obtained in the previous level of refinement. 

\subsection{Discussion of the numerical results}

\subsubsection{Discussion of the 2D results}
As a first example, we \zzz{set} $M=5$ in the 2D exact solution~\eqref{eq:adv_anal} to obtain a sufficiently smooth solution that allows us to appreciate the expected convergence rates.  Figure~\ref{fig:p1_M5} reports the results obtained with uniform mesh refinement and the polynomial degree $p=1$. Figure~\ref{fig:un_L2_p1_M5} shows the error measured in the $L^2$-norm vs.~DOFs in log-scale, whereas Figure~\ref{fig:un_gr_p1_M5} shows the error measured in the corresponding $V_h$-norm vs.~DOFs in log-scale. The main point is that the ``CT-up'' solution converges at the same rate as the ``DT-up'' solution and with \zzz{similar errors measured using} both the $L^2$- and $\|\cdot\|_{\rm up}$-norms. Interestingly, the ``CT-cf'' solution converges with a higher rate when compared with the ``DT-cf'' solution in the $L^2$-norm, and that is also the case for this solution in the $\|\cdot\|_{\rm cf}$-norm.  In Figure~\ref{fig:p2_M5}, we consider the same type of results but using the polynomial degree $p=2$. The conclusions are similar. Again\zzz{, both} ``CT-up'' and ``DT-up'' solutions converge at the same rate for both the $L^2$- and $\|\cdot\|_{\rm up}$-norms, \zzz{for which both methods deliver almost identical} error values. Incidentally, the ``DT-cf'' solution also converges at the same rate for $p=2$, again in both norms. 
\begin{figure}[t!]
  \centering
  \begin{subfigure}[b]{0.49\textwidth}
    \includegraphics[width=\textwidth]{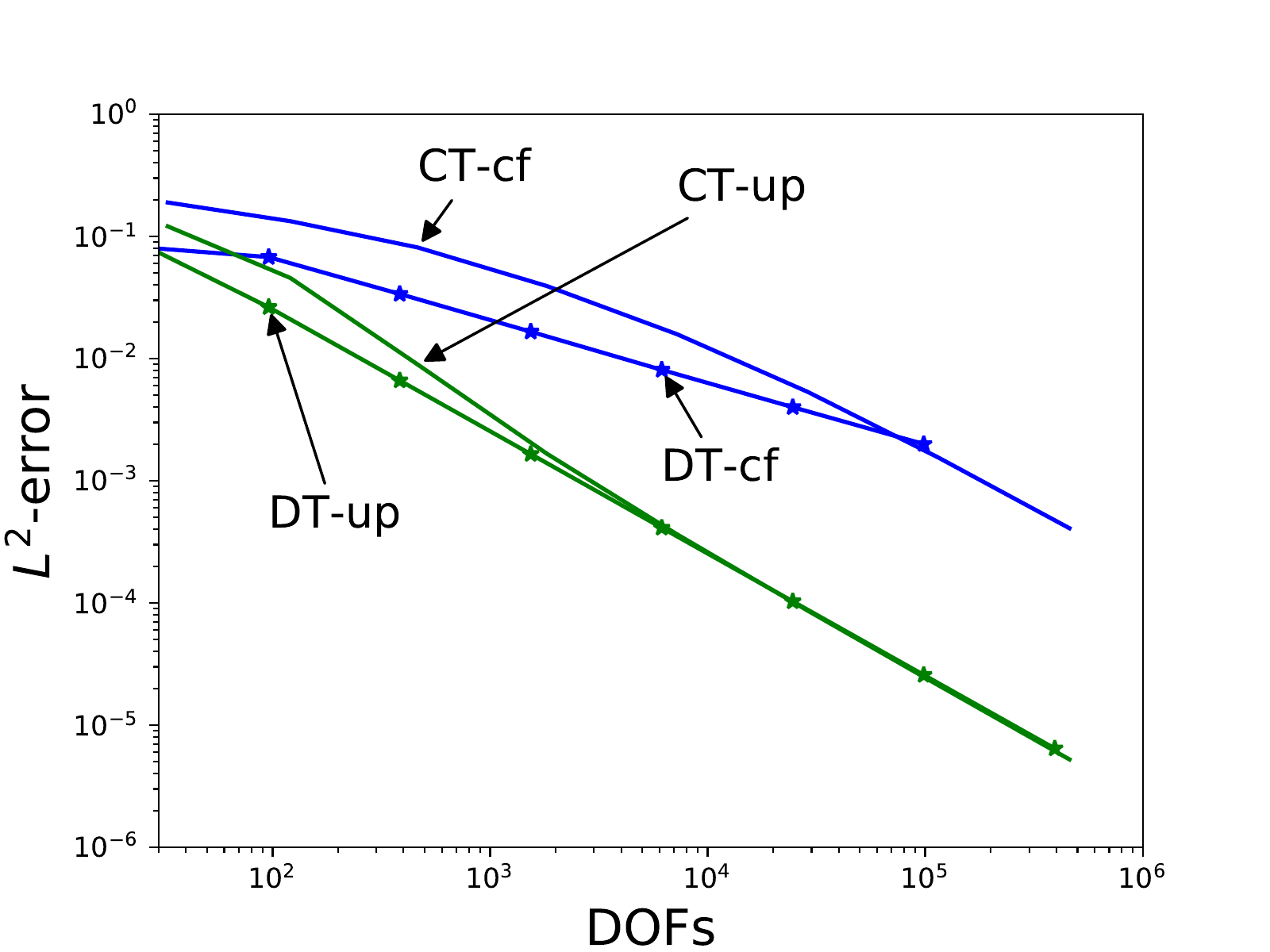}
    \caption{$L^2$ error vs.~DOFs}
    \label{fig:un_L2_p1_M5}
  \end{subfigure}
  \begin{subfigure}[b]{0.49\textwidth}
    \includegraphics[width=\textwidth]{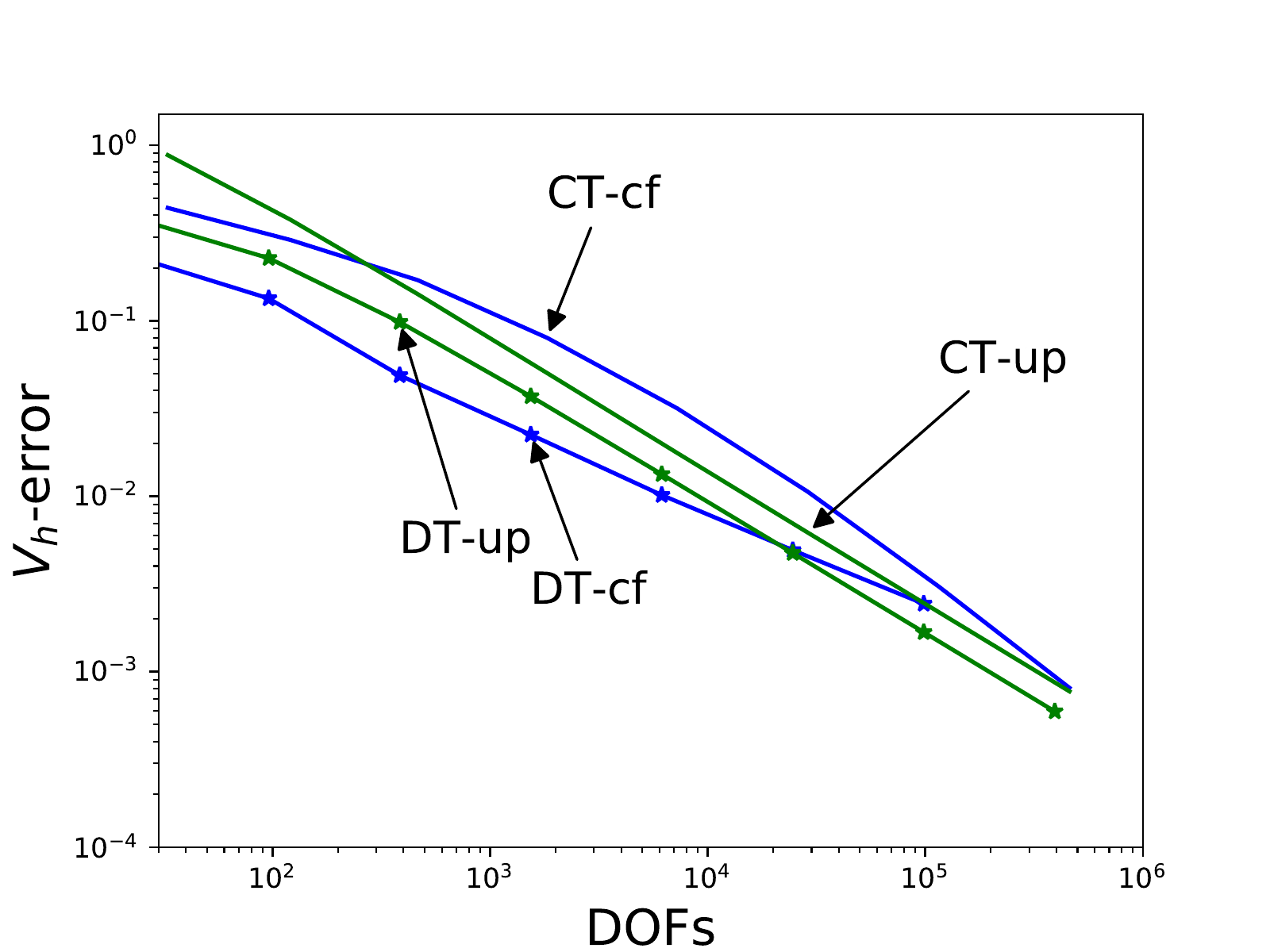}
    \caption{$V_h$ error vs.~DOFs}
    \label{fig:un_gr_p1_M5}
  \end{subfigure}
  \caption{2D model problem: $L^2$-error and $V_h$-error vs.~DOFs using uniform mesh refinement for $p=1$ and $M=5$.}
  \label{fig:p1_M5}
\end{figure}
\begin{figure}[h!]
  \centering
  \begin{subfigure}[b]{0.49\textwidth}
    \includegraphics[width=\textwidth]{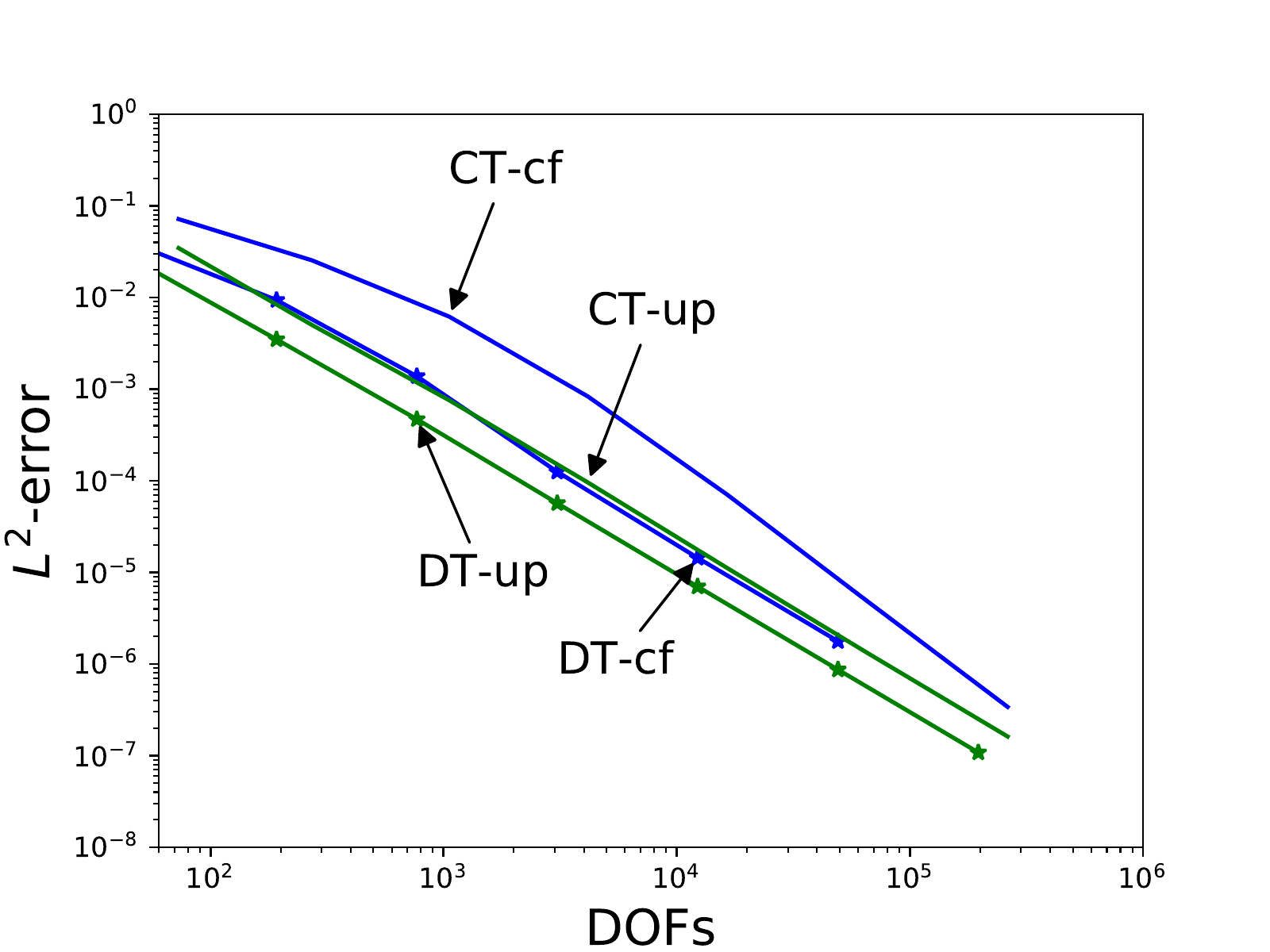}
    \caption{$L^2$ error vs.~DOFs}
    \label{fig:un_L2_p2_M5}
  \end{subfigure}
  %
  \begin{subfigure}[b]{0.49\textwidth}
    \includegraphics[width=\textwidth]{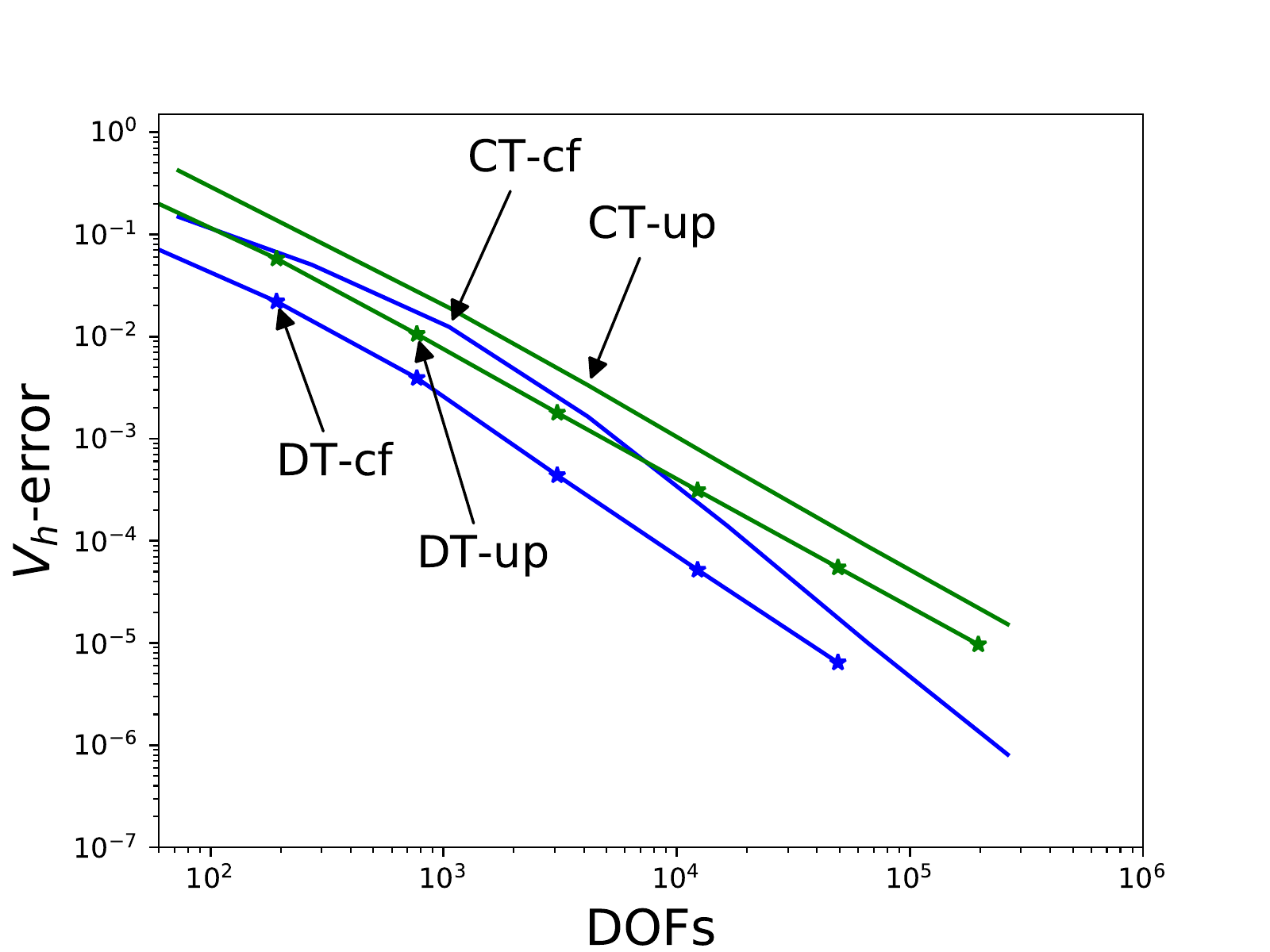}
    \caption{$V_h$ error vs.~DOFs}
    \label{fig:ad_gr_p2_M5}
  \end{subfigure}
  \caption{2D model problem: $L^2$-error and $V_h$-error vs.~DOFs using uniform mesh refinement for $p=2$ and $M=5$.}
  \label{fig:p2_M5}
\end{figure}

\tred{To study the validity of the saturation assumptions (see Assumptions~\ref{as:saturation} and \ref{as:weak}), we consider the $\|\cdot\|_{\textrm{up}}$-norm defined in~\eqref{eq:adv_norms} and, on each mesh, we introduce the following ratios:
\begin{equation}\label{eq:ratio}
\mathcal{S}:= \dfrac{\|u-\theta_h\|_{\textrm{up}}}{\|u-u_h\|_{\textrm{up}}}, \qquad  \mathcal{W}:= \dfrac{\|u-\theta_h\|_{\textrm{up}}}{\|\theta_h-u_h\|_{\textrm{up}}}.
\end{equation}
When $\mathcal{S}<1$, the approximation satisfies the saturation Assumption~\ref{as:saturation}, and thus also the weaker Assumption~\ref{as:weak}. Instead, the weaker Assumption~\ref{as:weak} is satisfied if there exists an upper bound, uniform with respect to the mesh size, for $\mathcal{W}$. Figures~\ref{fig:normp1u} and~\ref{fig:normp2u} show the quantities $\|u-u_h\|_{\textrm{up}}$, $\|u-\theta_h\|_{\textrm{up}}$, $\|\theta_h-u_h\|_{\textrm{up}}$, and $\|\varepsilon_h\|_{\textrm{up}}$ vs DOFs, for $p=1$ and $p=2$, respectively. All the quantities exhibit the same convergence rates, in agreement with the a~posteriori error estimates~\eqref{eq:bnd_uh-thetah} and~\eqref{eq:a_posteriori}.Figure~\ref{fig:satu} displays the ratios $\mathcal{S}$ and $\mathcal{W}$ defined in~\eqref{eq:ratio} vs DOFs, in log-scale, for $p=1$ and $p=2$. All meshes and polynomial orders, satisfy the saturation Assumption~\ref{as:saturation} and, a fortiori, the weaker Assumption~\ref{as:weak}.}
\begin{figure}[t!]
  \centering
  \begin{subfigure}[b]{0.49\textwidth}
    \centering
    \includegraphics[width=\textwidth]{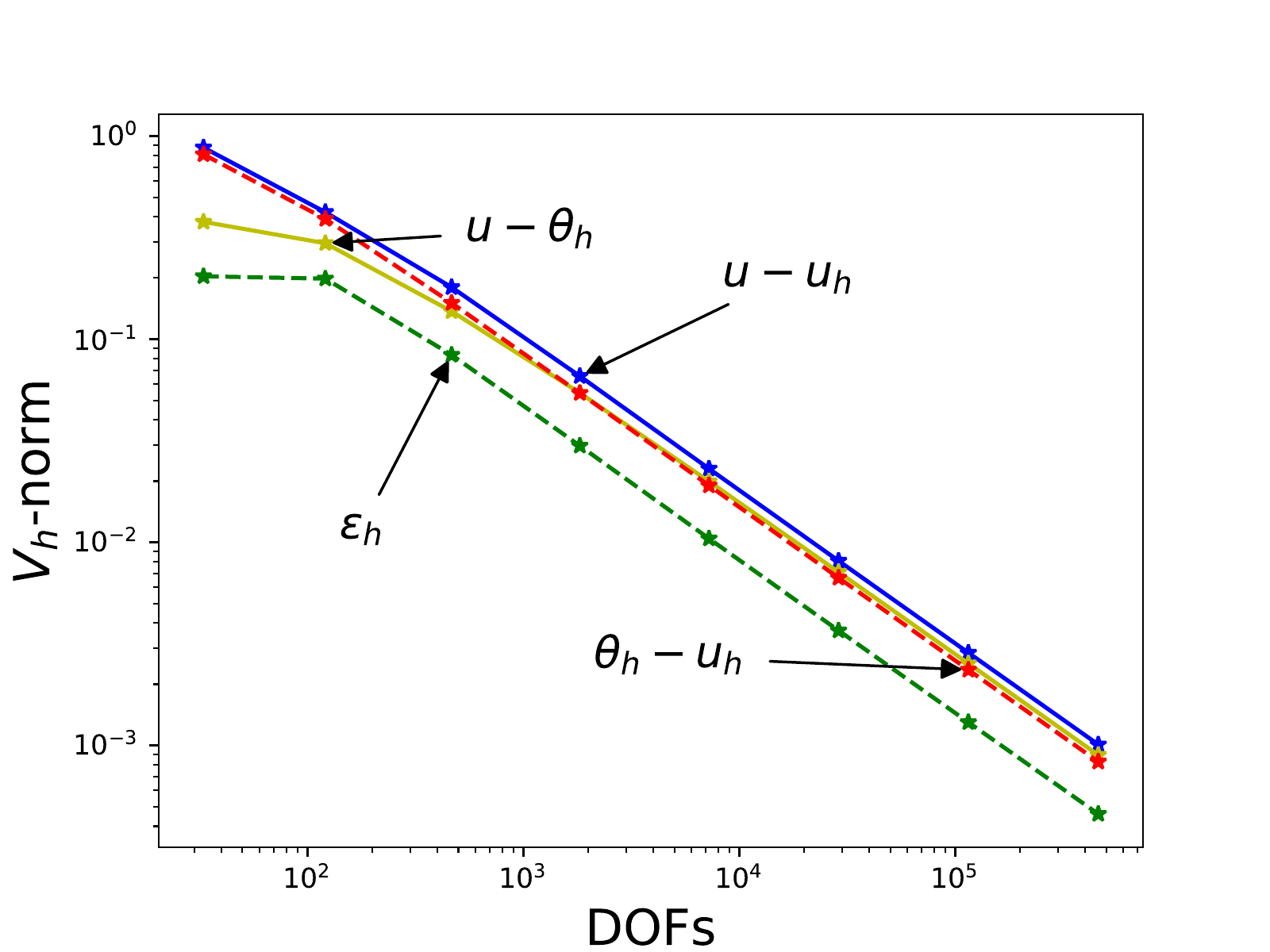}
    \caption{$p=1$}
    \label{fig:normp1u}
  \end{subfigure}
  %
  \begin{subfigure}[b]{0.49\textwidth}
    \centering
    \includegraphics[width=\textwidth]{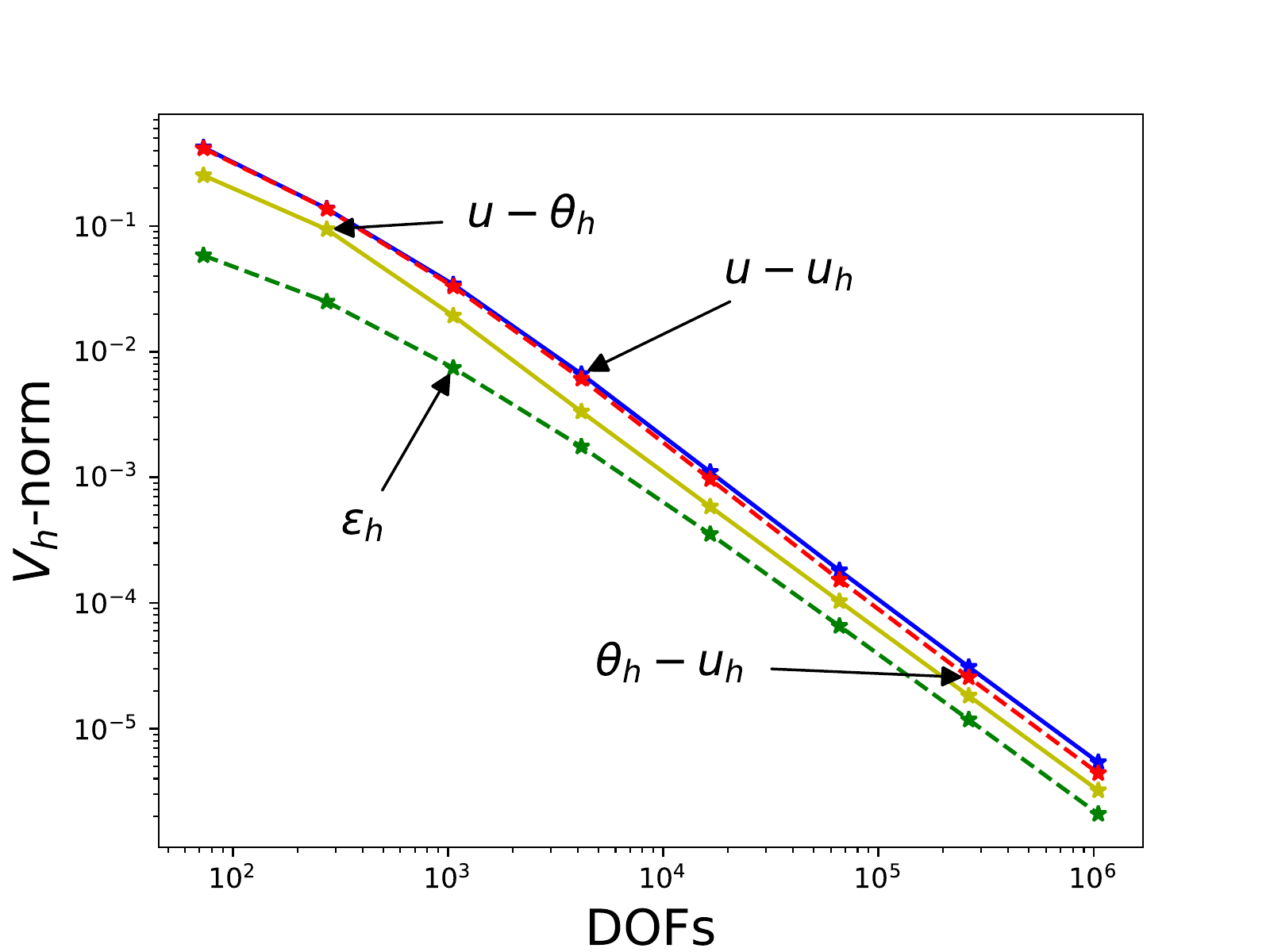}
    \caption{$p=2$}
    \label{fig:normp2u}
  \end{subfigure}
  \caption{$V_h$-norm vs DOFs for uniform meshes; CT-up, $p=1,2$ and $M=5$}
  \label{fig:norm_uniform}
\end{figure}
 \begin{figure}[t!]
 \vspace{0.4cm}
    \centering
    \begin{subfigure}[b]{0.49\textwidth} 
    \begin{picture}(100,100)
    \centering
    \put(0,0){\includegraphics[width=\textwidth]{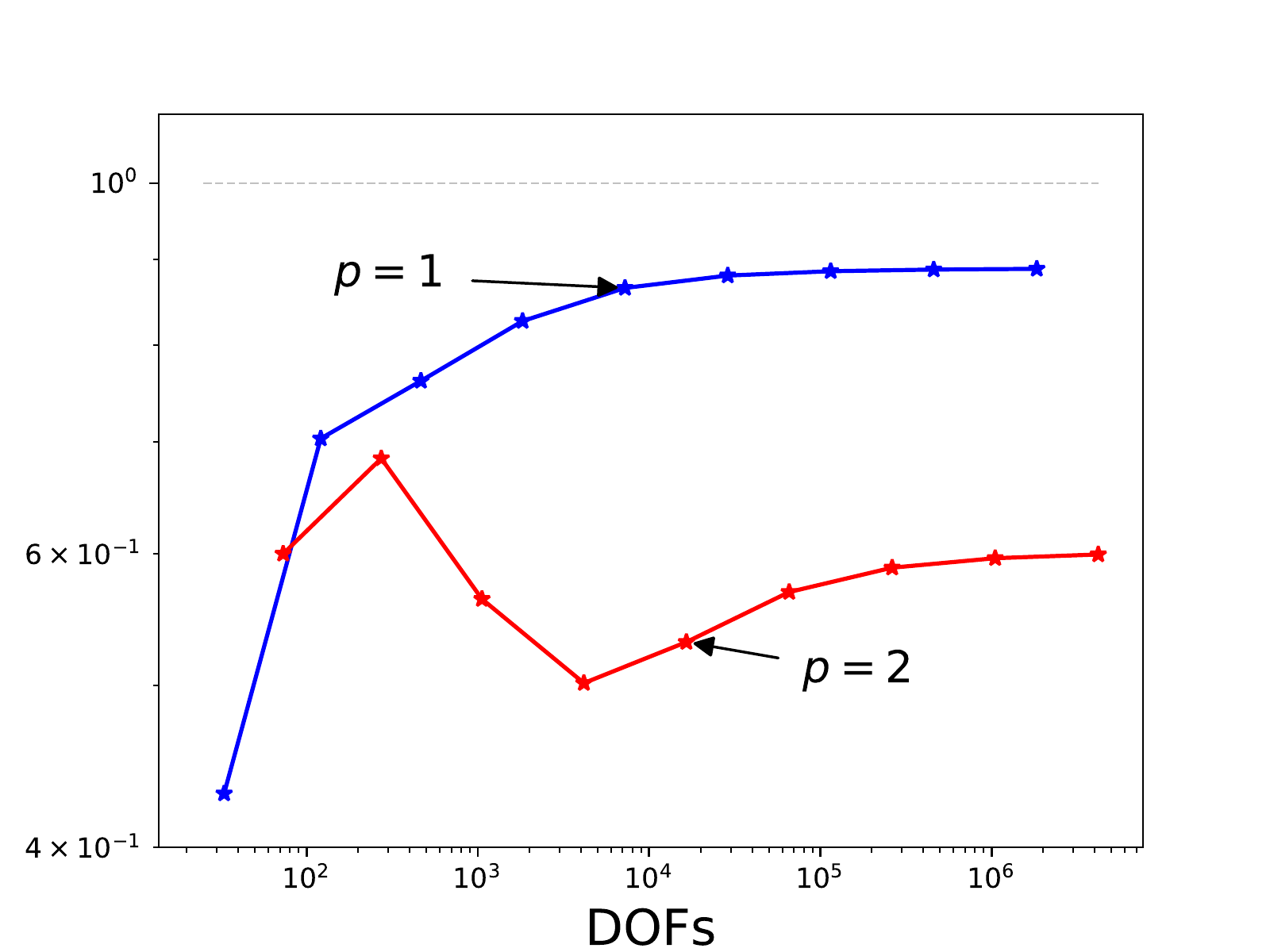}}
    \put(0,65){$\mathcal{S}$}
    \end{picture}
        \caption{Ratio $\mathcal{S}$}
        \label{fig:sat_strongu}
    \end{subfigure}
    %
    \begin{subfigure}[b]{0.49\textwidth}
    \begin{picture}(100,100)
    \centering
    \put(0,0){\includegraphics[width=\textwidth]{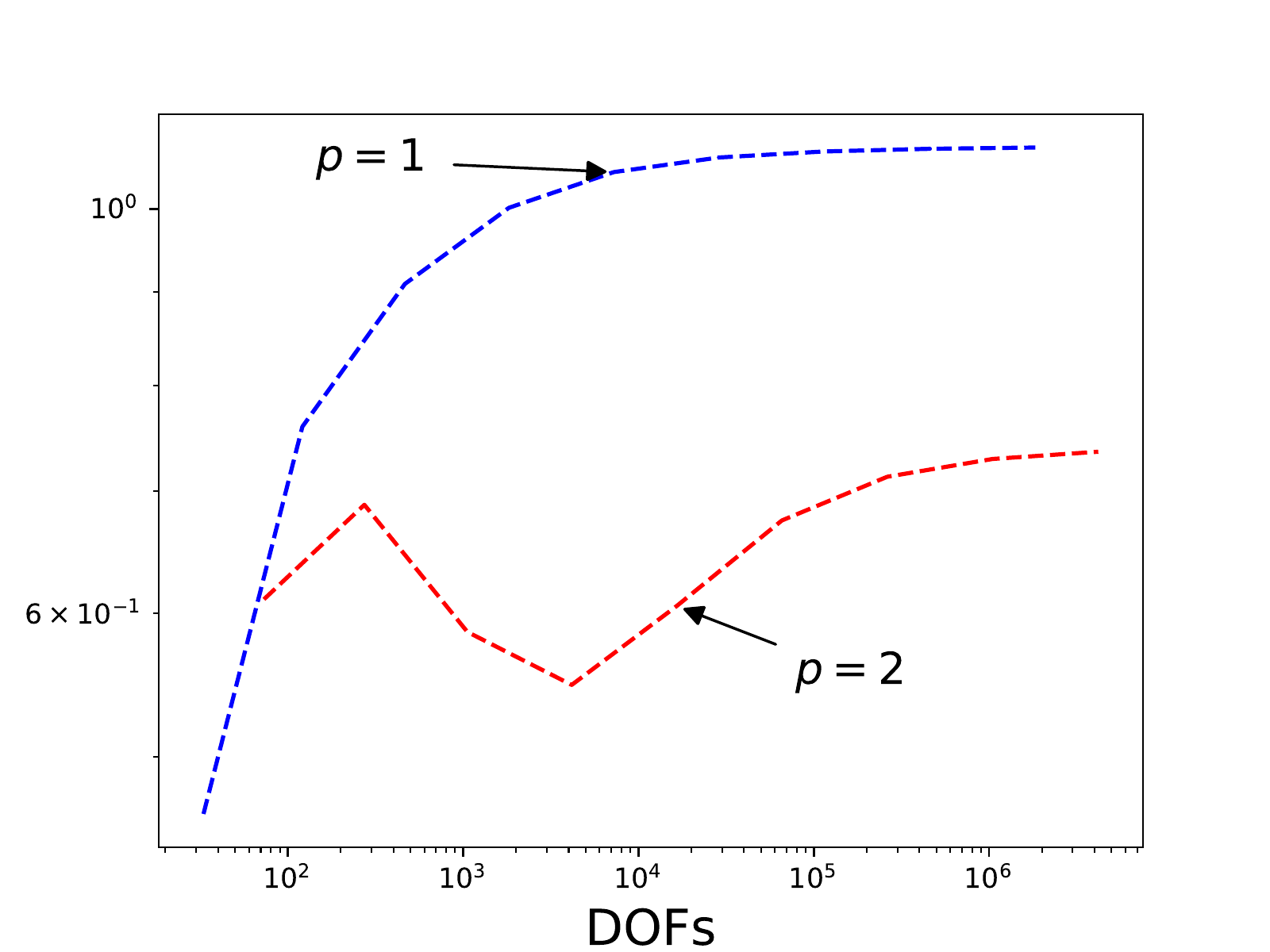}}
    \put(0,65){$\mathcal{W}$}
    \end{picture}
        \caption{Ratio $\mathcal{W}$}
        \label{fig:sat_weaku}
    \end{subfigure}
        \caption{Ratios $\mathcal{S}$ and $\mathcal{W}$ defined in~\eqref{eq:ratio} vs DOFs for uniform meshes; CT-up, $p=1,2$ and $M=5$}
        \label{fig:satu}
 \end{figure}  

As a second example, we consider the value $M=500$ for the 2D exact solution $u_M$ in~\eqref{eq:adv_anal} \zzz{such} that $u_M$ is very close to the discontinuous function $u_\infty$ defined in~\eqref{eq:u_infty} (see Figure~\ref{fig:anal_sol_M500}). In this case, \zzz{uniform refinements do not deliver optimal convergence as Figures~\ref{fig:un_L2_p1_M500}-\ref{fig:un_gr_p1_M500}, and~\ref{fig:un_L2_p2_M500}-\ref{fig:un_gr_p2_M500} show} for the polynomial degrees $p=1$ and $p=2$, respectively. However, as Figures~\ref{fig:ad_L2_p1_M500}-\ref{fig:ad_gr_p1_M500}, and~\ref{fig:ad_L2_p2_M500}-\ref{fig:ad_gr_p2_M500} show, the convergence improves for the ``CT-up'' solution by resorting to mesh adaptation. Figures~\ref{fig:cfmeshcut} and~\ref{fig:upmeshcut} compare \zzz{the adaptive refinement processes at a cut} of the solution over the line $\{x_1 = 1-x_2\}$ requiring a similar number of total DOFs. \zzz{The} choice of the norm in $V_h$ has a significant impact on the convergence of the adaptive process. \zzz{The method achieves a significantly faster convergence rate} if one uses the $\|\cdot\|_{\rm up}$-norm rather than the $\|\cdot\|_{\rm cf}$-norm. This \zzz{shows} that the residual \zzz{representative defined in}~\eqref{eq:e_h} can \zzz{guide the adaptive process. Also,} the use of the stronger norm $\|\cdot\|_{\rm up}$ drives \zzz{adaptivity} more efficiently than the $\|\cdot\|_{\rm cf}$-norm which would be considered in LS-FEM.  
\tred{In this case, $p=2$ satisfies the saturation Assumption~\ref{as:saturation}, measured using the up-norm, for all meshes. In contrast, for the $p =1$ case only on the finer meshes (and a few coarser meshes) satisfy this assumption as Figure~\ref{fig:sat} shows.
In spite of this, Figures~\ref{fig:normp1} and ~\ref{fig:normp2} show similar convergence rates for the quantities $\|u-u_h\|_{\textrm{up}}$ and $\|\varepsilon_h\|_{V_h}$ for both polynomial degrees. 
Moreover, Figure~\ref{fig:sat_weak} shows that there is indeed an upper bound for the ratio $\mathcal{W}$, indicating that both cases satisfy the weaker Assumption~\ref{as:weak}.}
\begin{figure}[h!]
    \centering
    \begin{subfigure}[b]{0.49\textwidth}
    \centering
        \includegraphics[width=\textwidth]{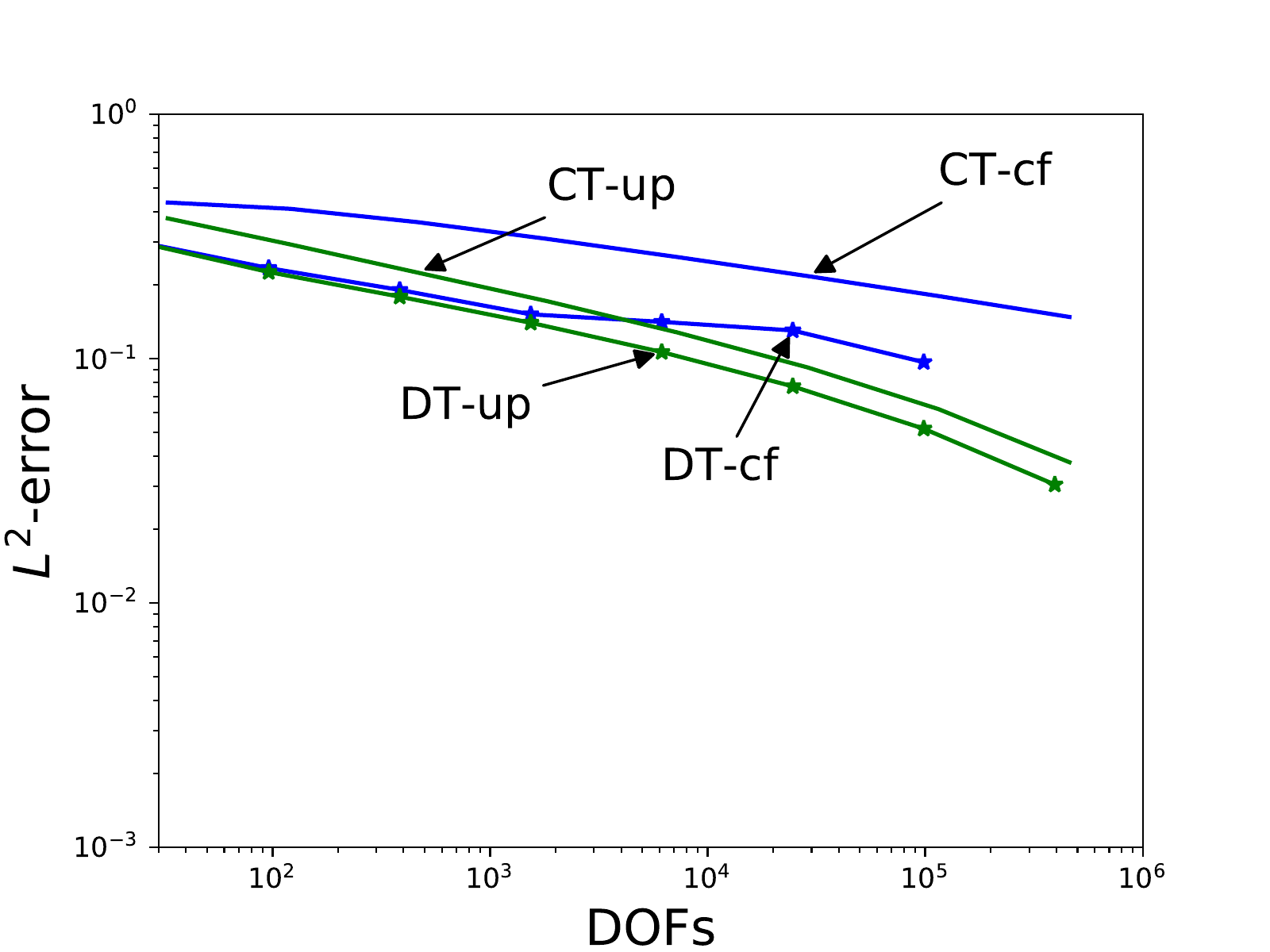}
        \caption{$L^2$ error vs.~DOFs; uniform}
        \label{fig:un_L2_p1_M500}
    \end{subfigure}
    \begin{subfigure}[b]{0.49\textwidth}
    \centering
        \includegraphics[width=\textwidth]{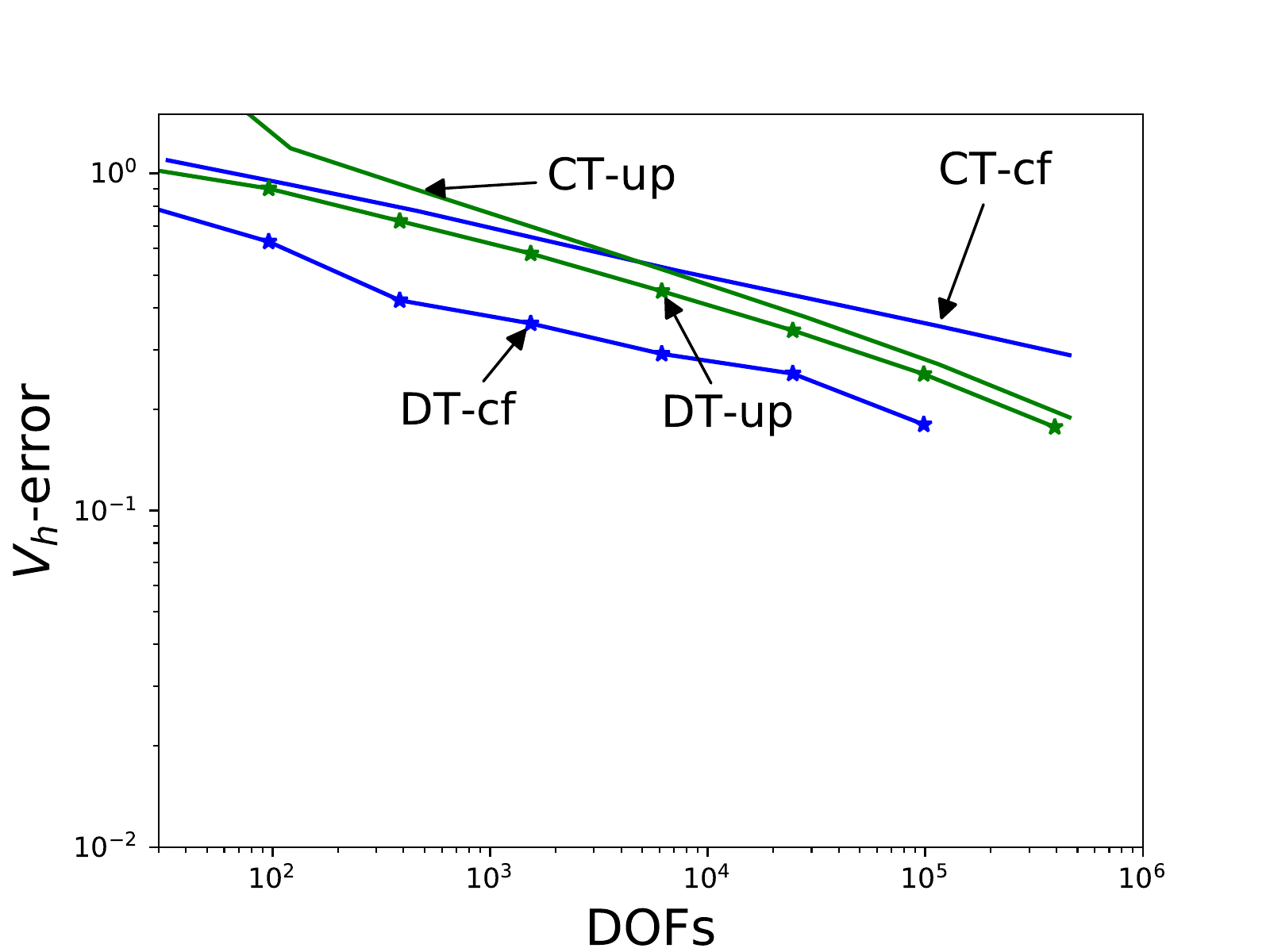}
        \caption{$V_h$ error vs.~DOFs; uniform}
        \label{fig:un_gr_p1_M500}
    \end{subfigure} 
    \caption{2D model problem: errors in the $L^2$- and $\|\cdot\|_{V_h}$-norms vs.~DOFs for uniform meshes, $p=1$, and $M=500$.}
 \end{figure}
    \begin{figure}[t!]
    \centering
    \begin{subfigure}[b]{0.49\textwidth}
    \centering
        \includegraphics[width=\textwidth]{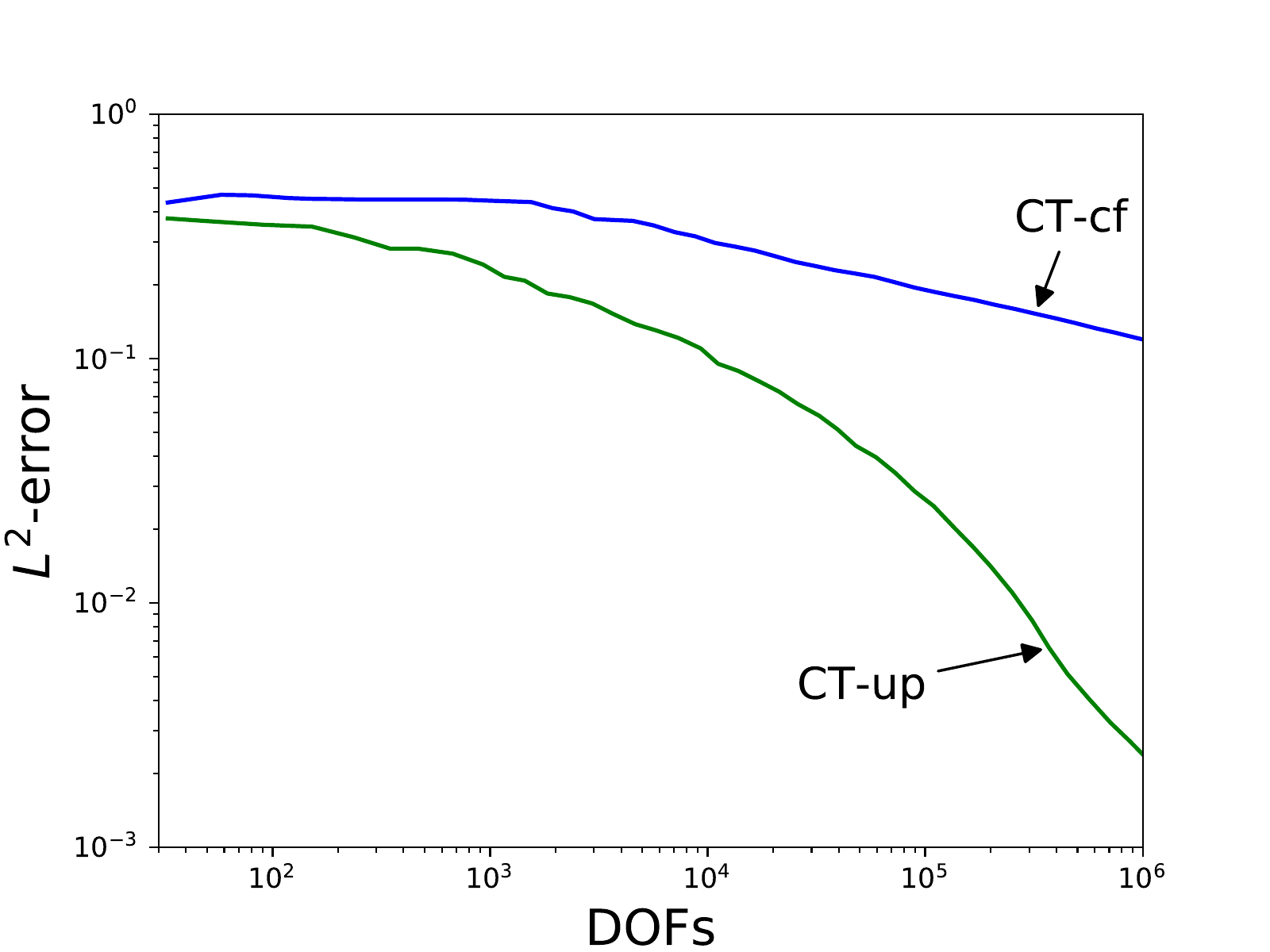}
        \caption{$L^2$ error vs.~DOFs; adaptive}
        \label{fig:ad_L2_p1_M500}
    \end{subfigure}
    \begin{subfigure}[b]{0.49\textwidth}
    \centering
        \includegraphics[width=\textwidth]{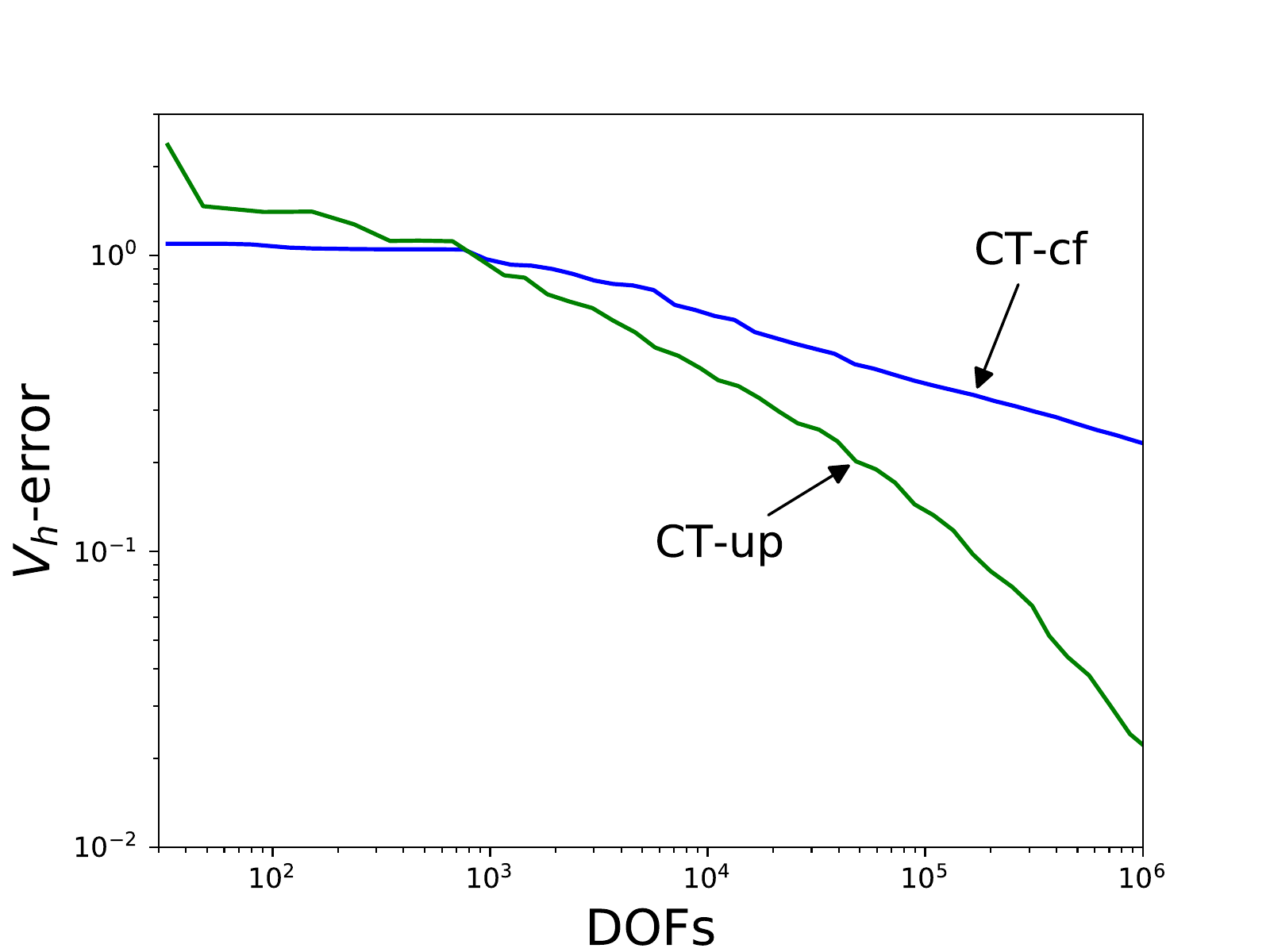}
        \caption{$V_h$ error vs.~DOFs; adaptive}
        \label{fig:ad_gr_p1_M500}
    \end{subfigure}
   \caption{2D model problem: errors in the $L^2$- and $\|\cdot\|_{V_h}$-norms vs.~DOFs for adaptive meshes, $p=1$, and $M=500$.}
 \end{figure}
\begin{figure}[t!]
    \centering
    \begin{subfigure}[b]{0.49\textwidth}
    \centering
        \includegraphics[width=\textwidth]{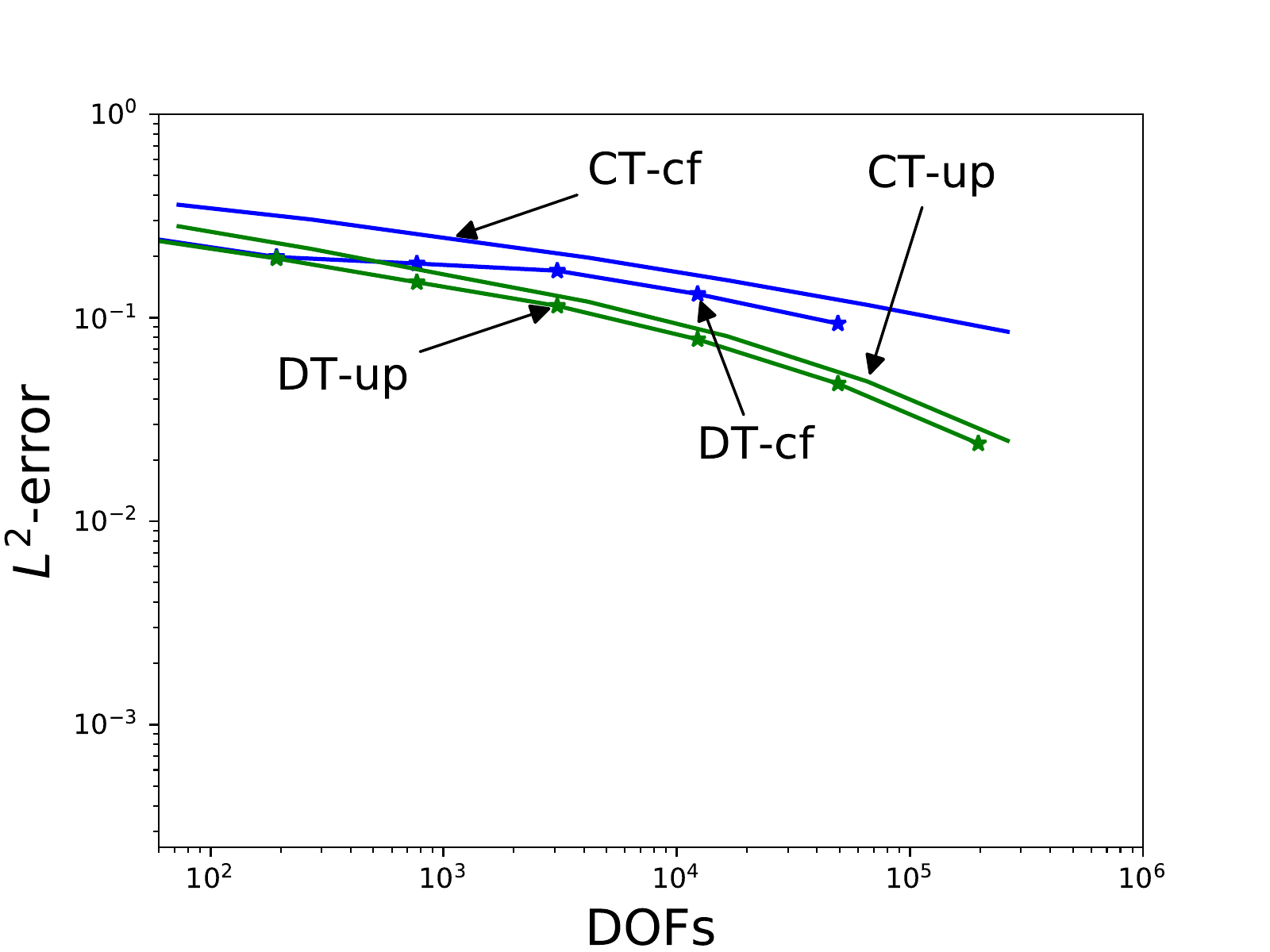}
        \caption{$L^2$ error vs.~DOFs; uniform}
        \label{fig:un_L2_p2_M500}
    \end{subfigure}
    \begin{subfigure}[b]{0.49\textwidth}
    \centering
        \includegraphics[width=\textwidth]{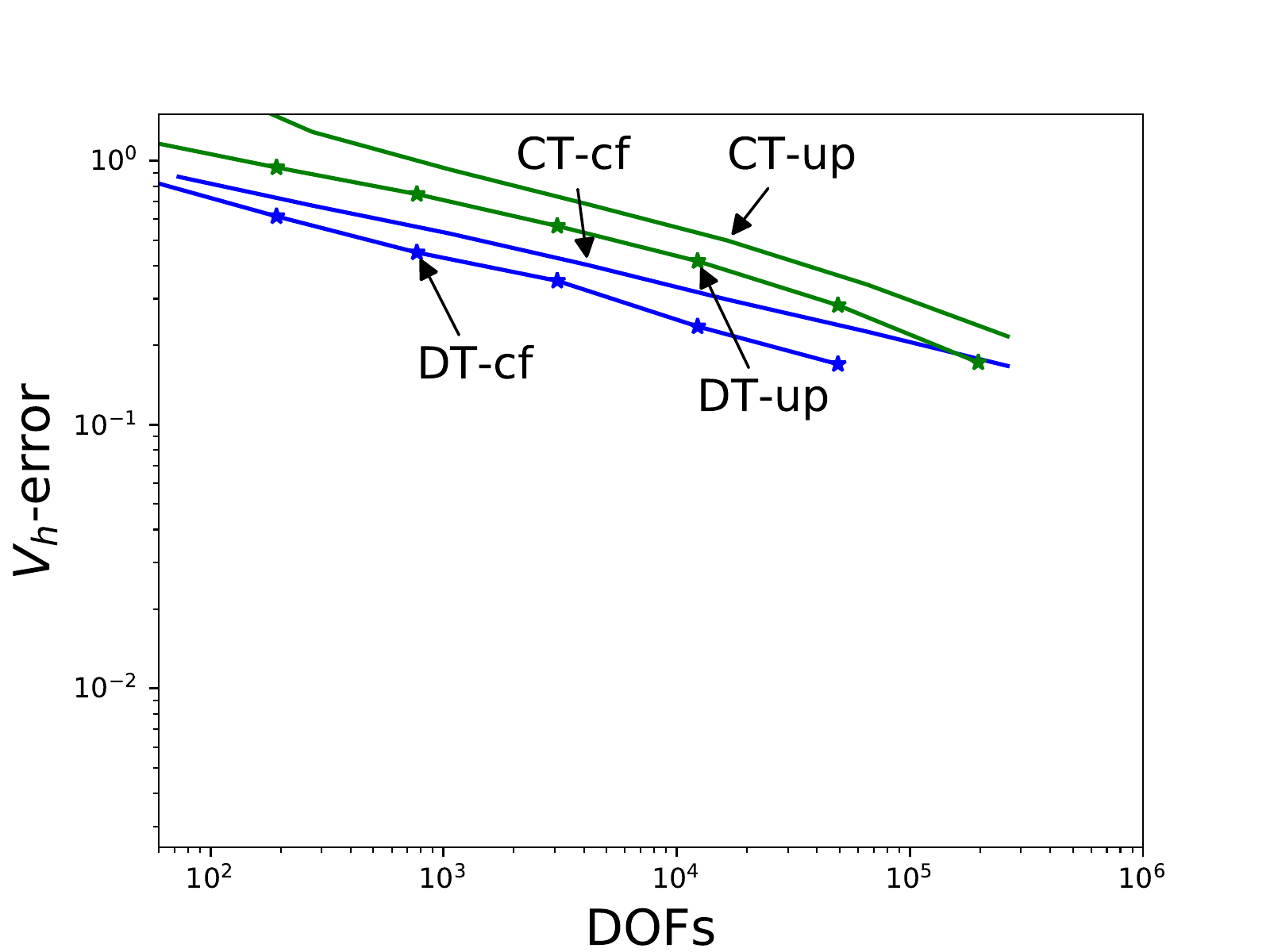}
        \caption{$V_h$ error vs.~DOFs; uniform}
        \label{fig:un_gr_p2_M500}
    \end{subfigure}
   \caption{2D model problem: errors in the $L^2$- and $\|\cdot\|_{V_h}$-norms vs.~DOFs for uniform meshes, $p=2$, and $M=500$.}
\end{figure}
\begin{figure}[h!]
    \centering
    \begin{subfigure}[b]{0.49\textwidth}
    \centering
        \includegraphics[width=\textwidth]{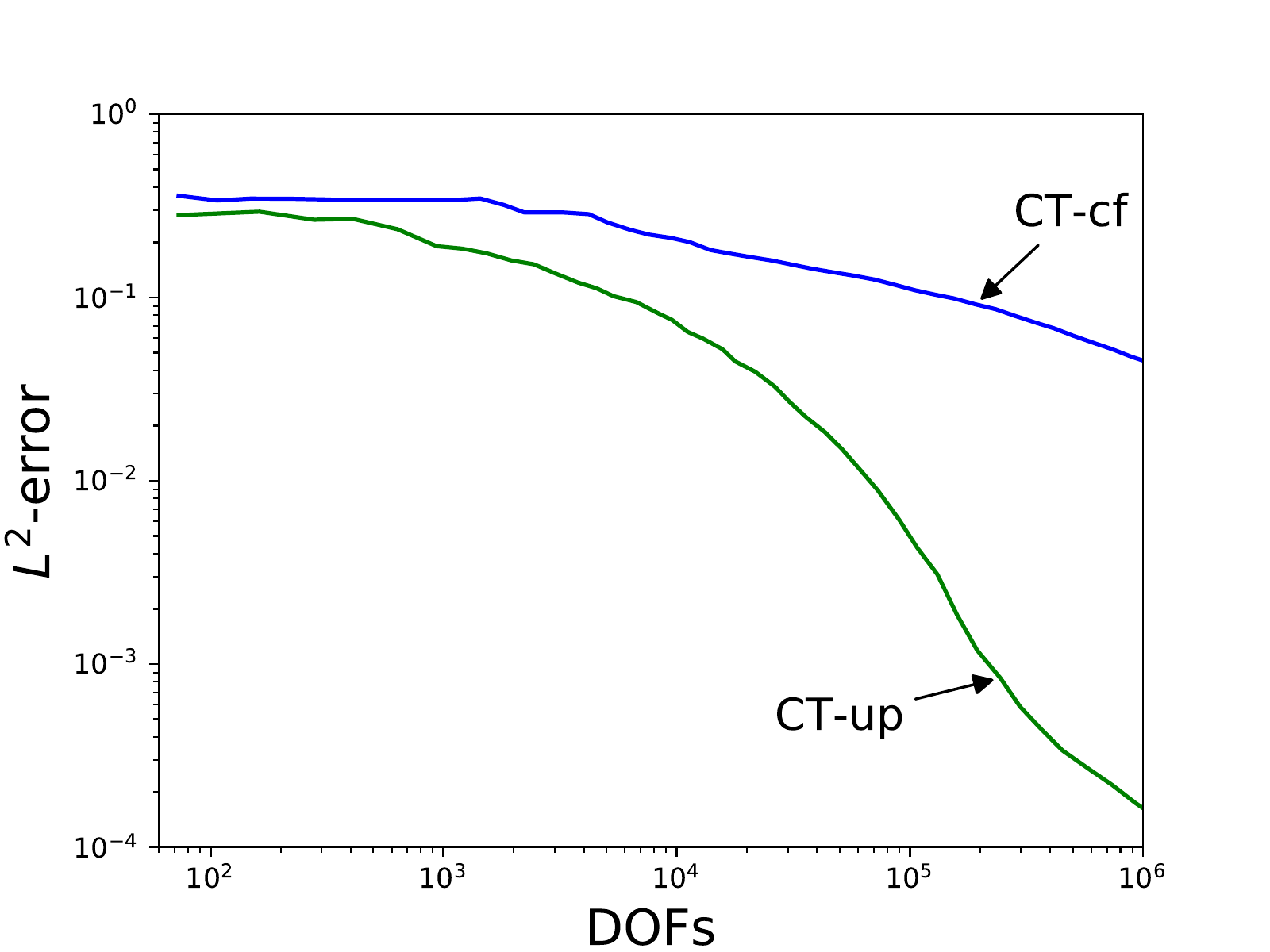}
        \caption{$L^2$ error vs.~DOFs; adaptive}
        \label{fig:ad_L2_p2_M500}
    \end{subfigure}
    \begin{subfigure}[b]{0.49\textwidth}
    \centering
        \includegraphics[width=\textwidth]{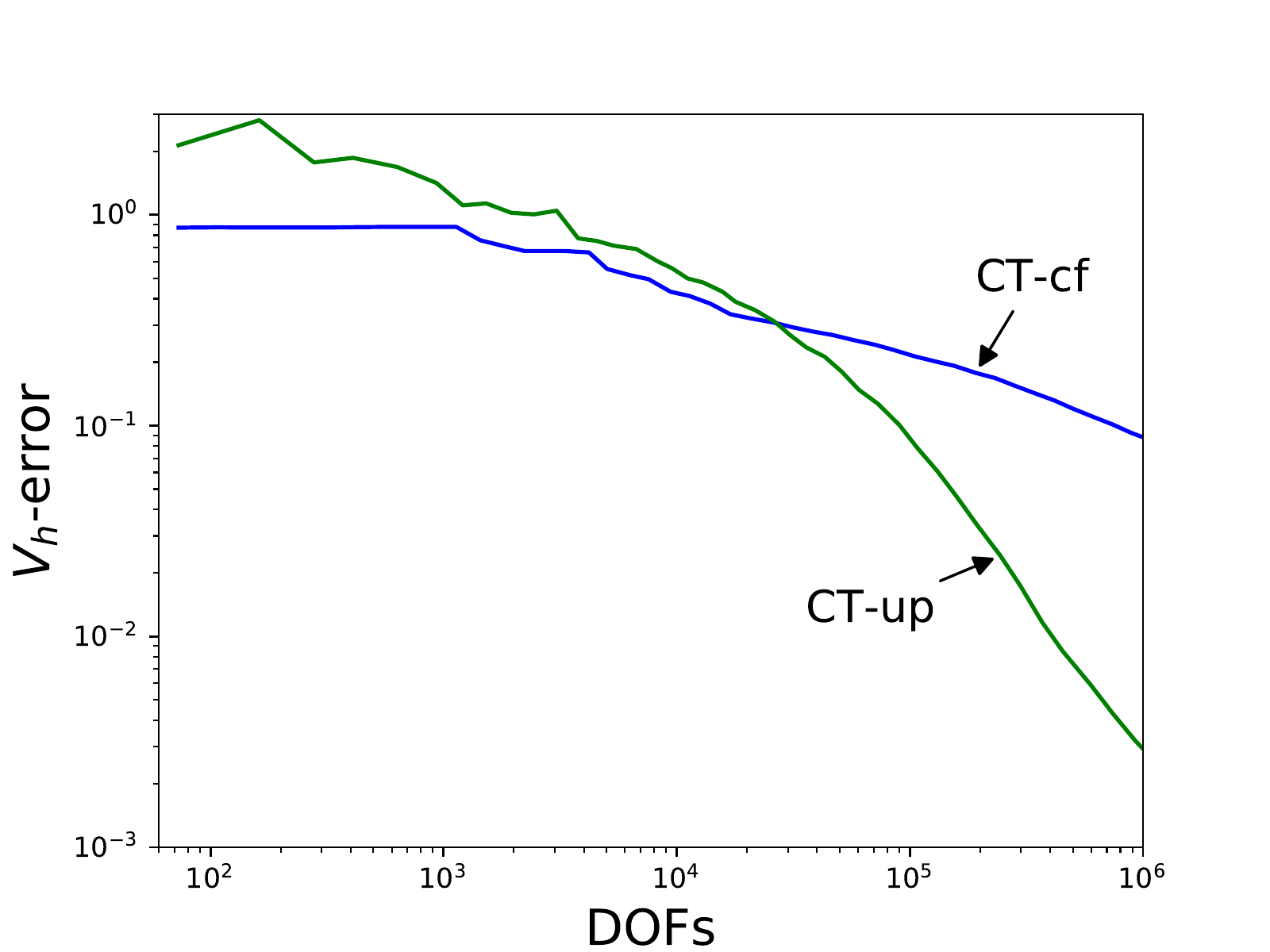}
        \caption{$V_h$ error vs.~DOFs; adaptive}
        \label{fig:ad_gr_p2_M500}
    \end{subfigure}
   \caption{2D model problem: errors in the $L^2$- and $\|\cdot\|_{V_h}$-norms vs.~DOFs for adaptive meshes, $p=2$, and $M=500$.}
\end{figure}
\begin{figure}[t!]
    \centering
    \begin{subfigure}[b]{0.49\textwidth}
    \centering
        \includegraphics[width=\textwidth]{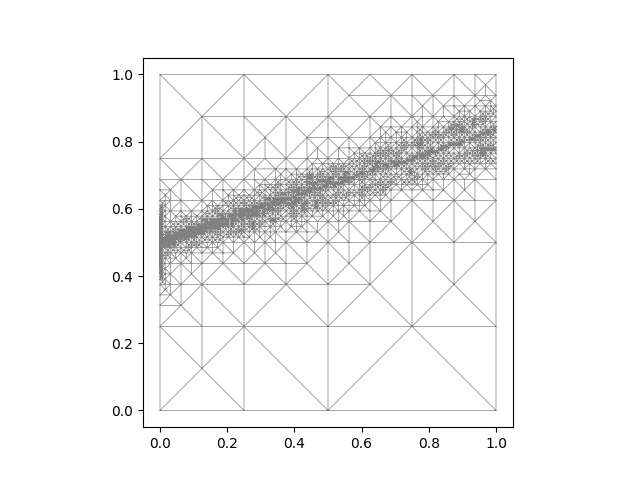}
        \caption{Adapted mesh}
        \label{fig:cfmesh}
    \end{subfigure}
    %
    \begin{subfigure}[b]{0.49\textwidth}
    \centering
        \includegraphics[width=\textwidth]{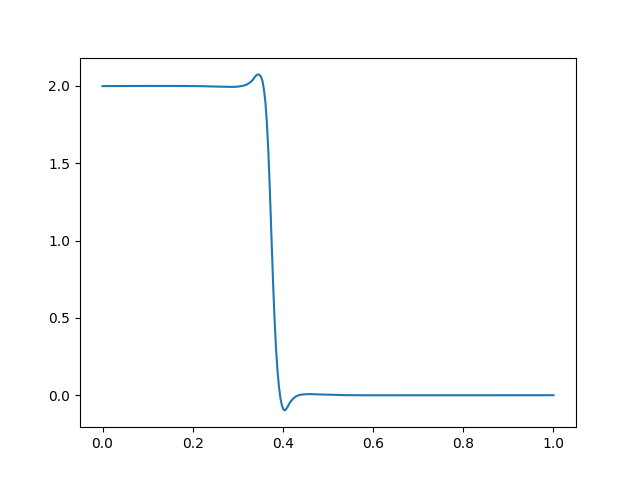}
        \caption{Solution cut over the line $x_1 = 1-x_2$}
        \label{fig:cfcut}
    \end{subfigure}
   \caption{2D model problem: adaptively refined mesh and transversal cut of the discrete solution for $p=2$, $M=500$ at level 35 of refinement; CT-cf, 191,546 DOFs.}
 \label{fig:cfmeshcut}
 \end{figure}
  \begin{figure}[h!]
    \centering
    \begin{subfigure}[b]{0.49\textwidth}
    \centering
  
        \includegraphics[width=\textwidth]{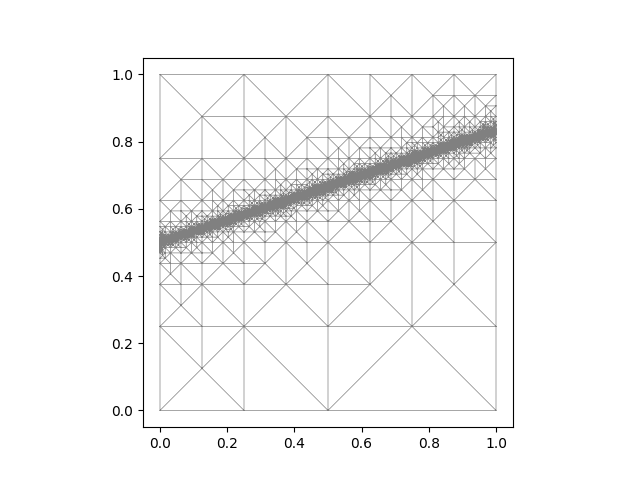}
        \caption{Adapted mesh}
        \label{fig:upmesh}
    \end{subfigure}
    %
    \begin{subfigure}[b]{0.49\textwidth}
    \centering

        \includegraphics[width=\textwidth]{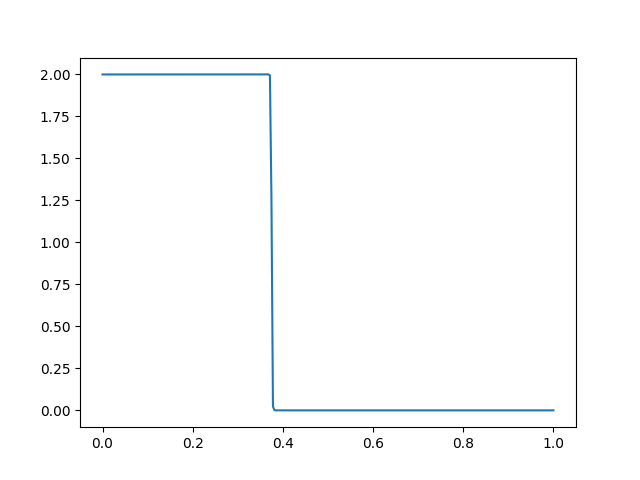}
        \caption{Solution cut over the line $x_1 = 1-x_2$}
        \label{fig:upcut}
    \end{subfigure}
   \caption{2D model problem: adaptively refined mesh and transversal cut of the discrete solution for $p=2$, $M=500$; CT-up, 159,255 DOFs.}
   \label{fig:upmeshcut}
 \end{figure}
\begin{figure}[h!]
    \centering
    \begin{subfigure}[b]{0.49\textwidth}
    \centering
       \includegraphics[width=\textwidth]{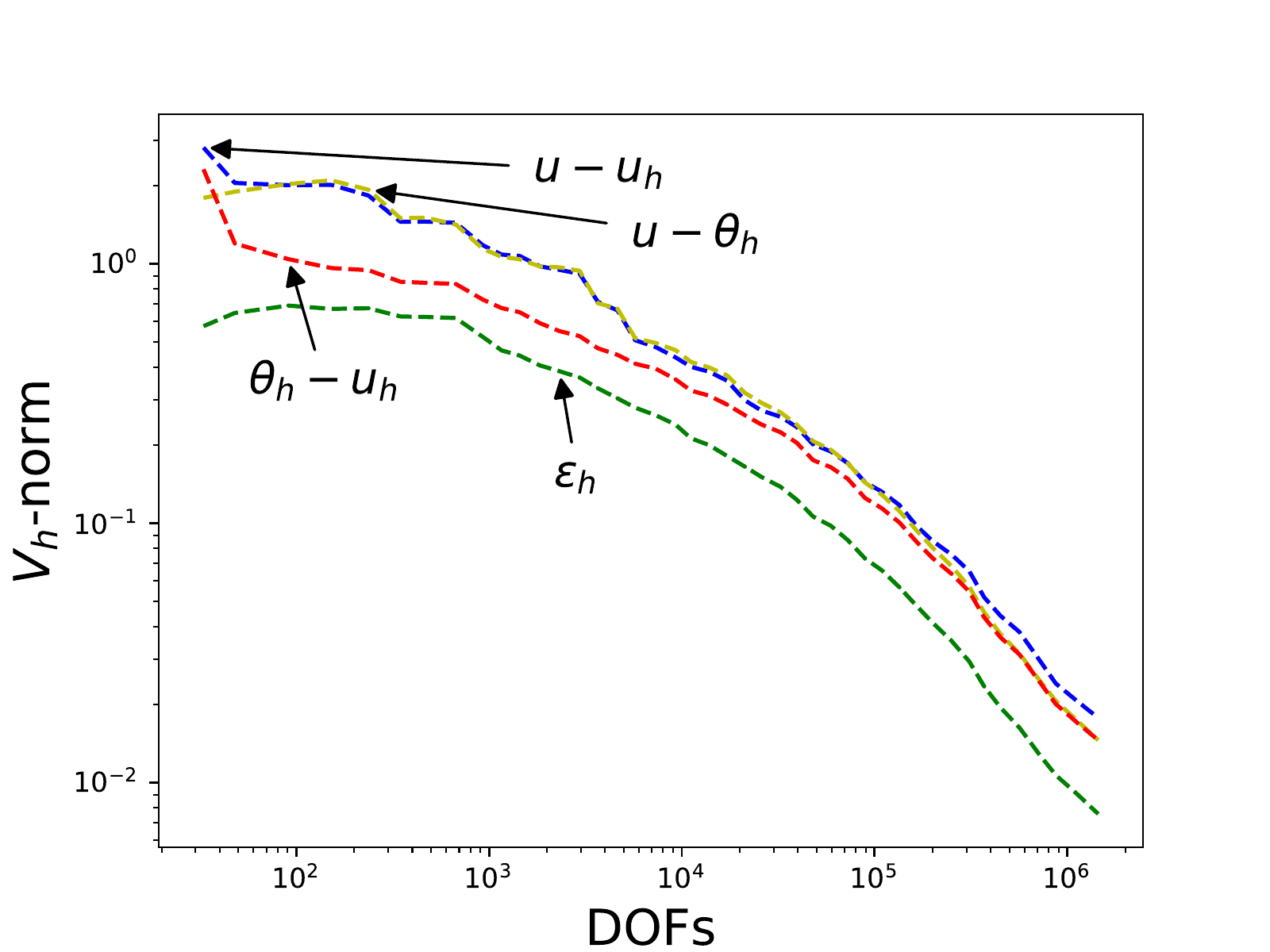}
        \caption{$p=1$}
        \label{fig:normp1}
    \end{subfigure}
    %
    \begin{subfigure}[b]{0.49\textwidth}
    \centering
 \includegraphics[width=\textwidth]{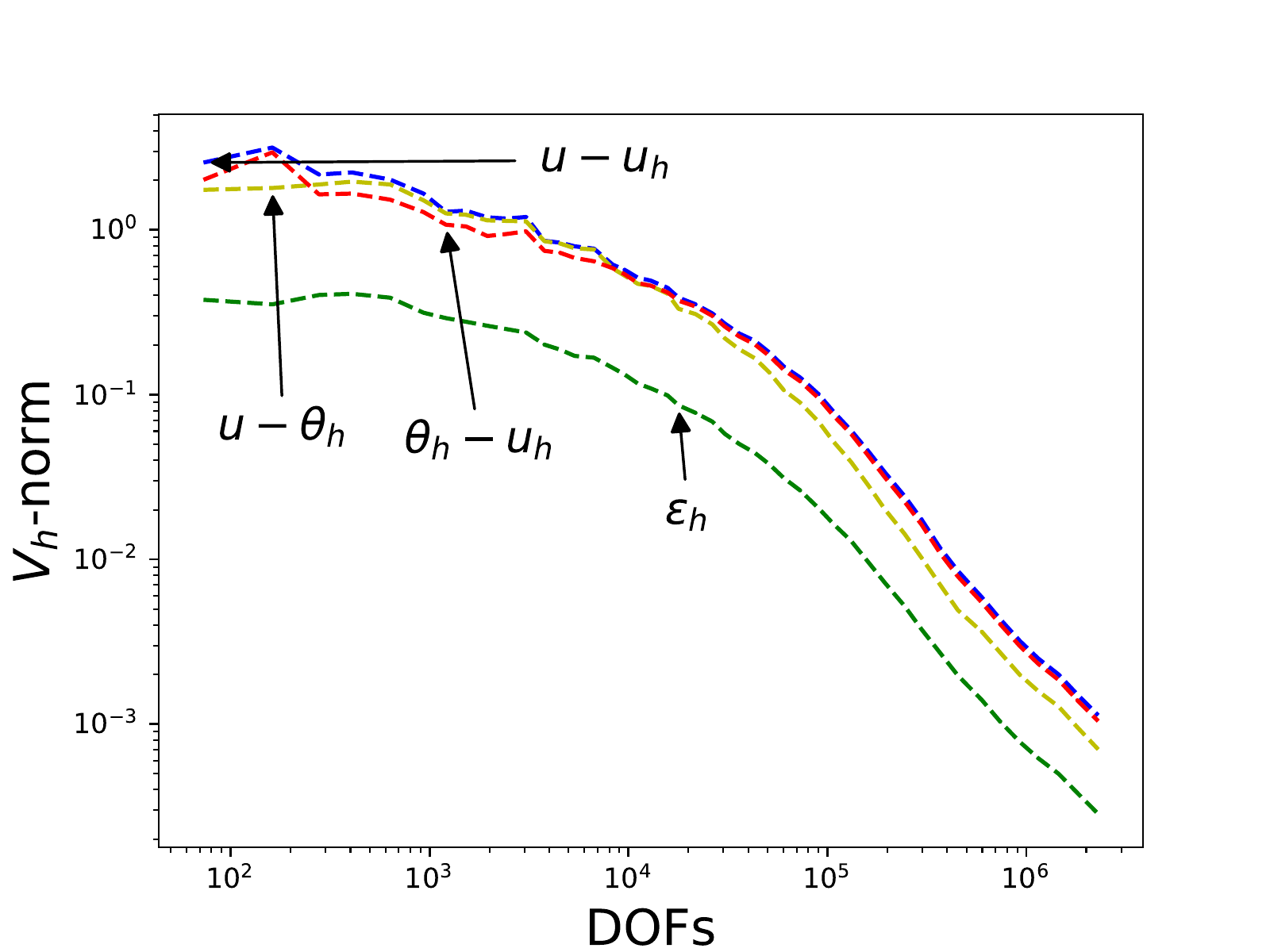}
        \caption{$p=2$}
        \label{fig:normp2}
    \end{subfigure}
   \caption{$V_h$-norm vs DOFs for adaptive meshes; CT-up, $p=1,2$ and $M=500$.}
   \label{fig:norm_adapt}
 \end{figure}
 \begin{figure}[t!]
 \vspace{0.4cm}
    \centering
    \begin{subfigure}[b]{0.49\textwidth} 
    \begin{picture}(100,100)
    \centering
    \put(0,0){\includegraphics[width=\textwidth]{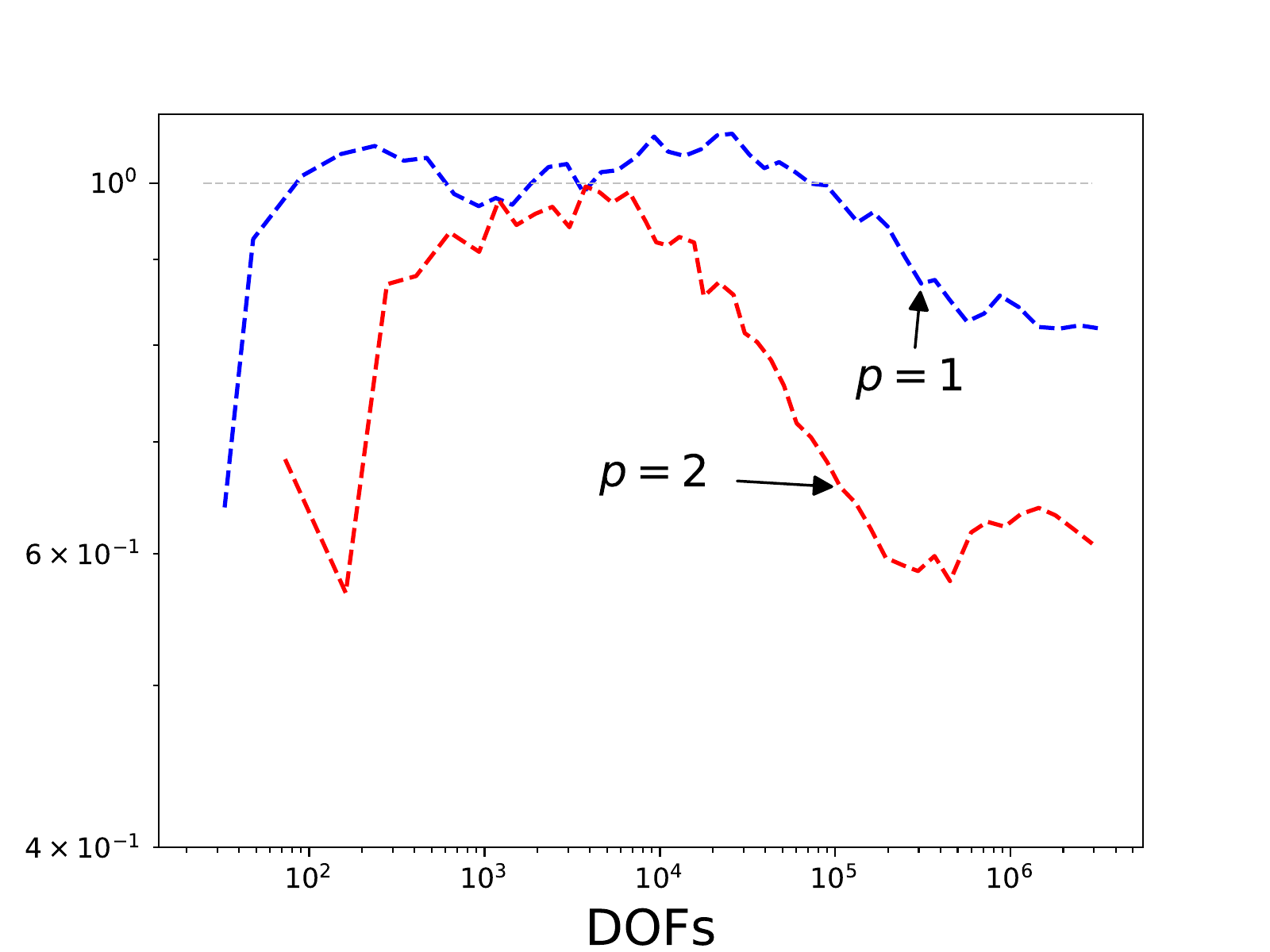}}
    \put(0,65){$\mathcal{S}$}
    \end{picture}
        \caption{Ratio $\mathcal{S}$}
        \label{fig:sat}
    \end{subfigure}
    \begin{subfigure}[b]{0.49\textwidth}
    \begin{picture}(100,100)
    \centering
    \put(0,0){\includegraphics[width=\textwidth]{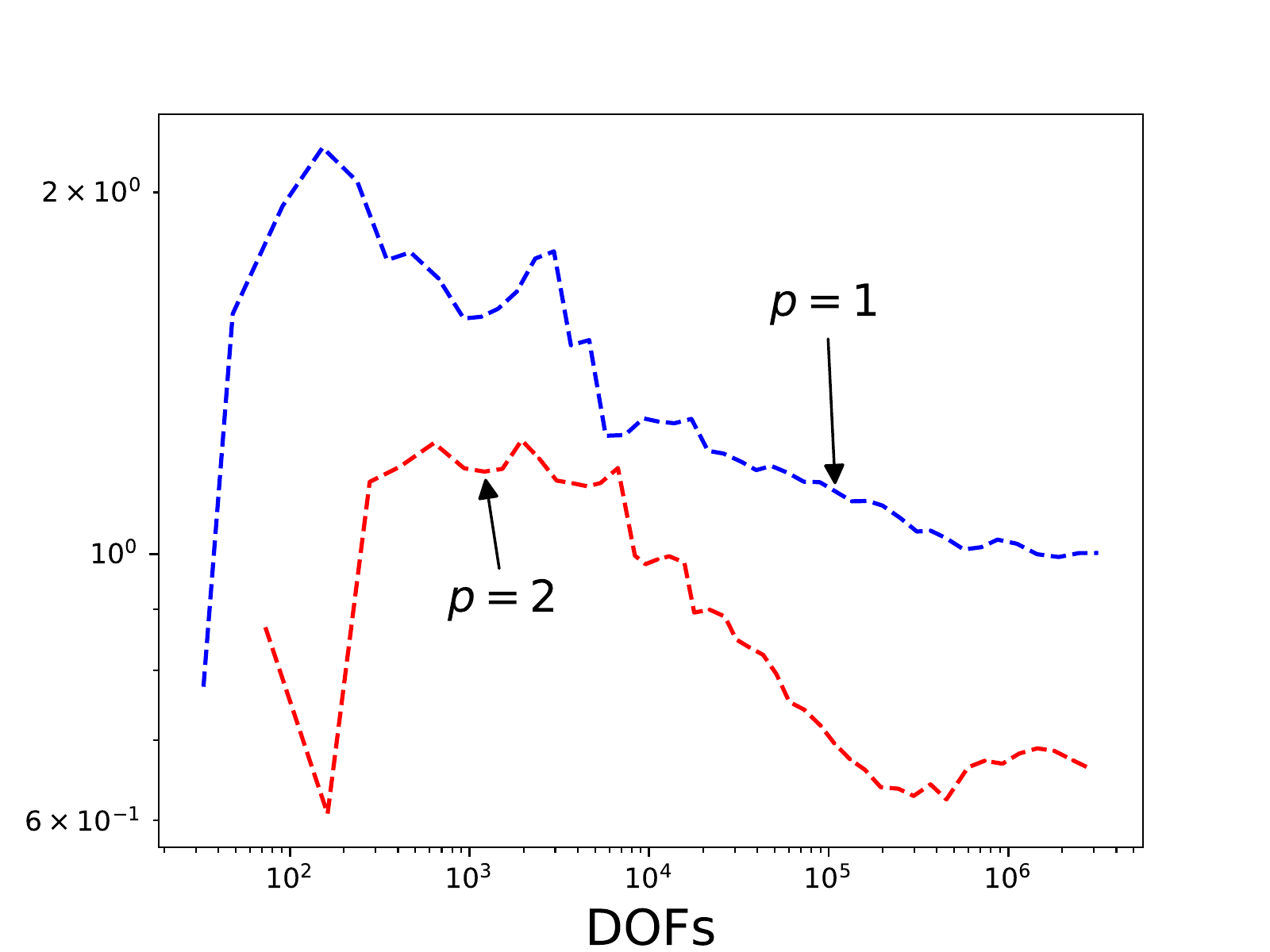}}
    \put(0,65){$\mathcal{W}$}
    \end{picture}
        \caption{Ratio $\mathcal{W}$}
        \label{fig:sat_weak}
    \end{subfigure}
        \caption{Ratios $\mathcal{S}$ and $\mathcal{W}$ defined in~\eqref{eq:ratio} vs DOFs for adaptive meshes; CT-up, $p=1,2$ and $M=500$}
        \label{fig:sat_all}
 \end{figure}  

%
\begin{figure}[t!]
  \centering
  \begin{subfigure}[b]{0.48\textwidth}
    \centering
    \includegraphics[width=\textwidth]{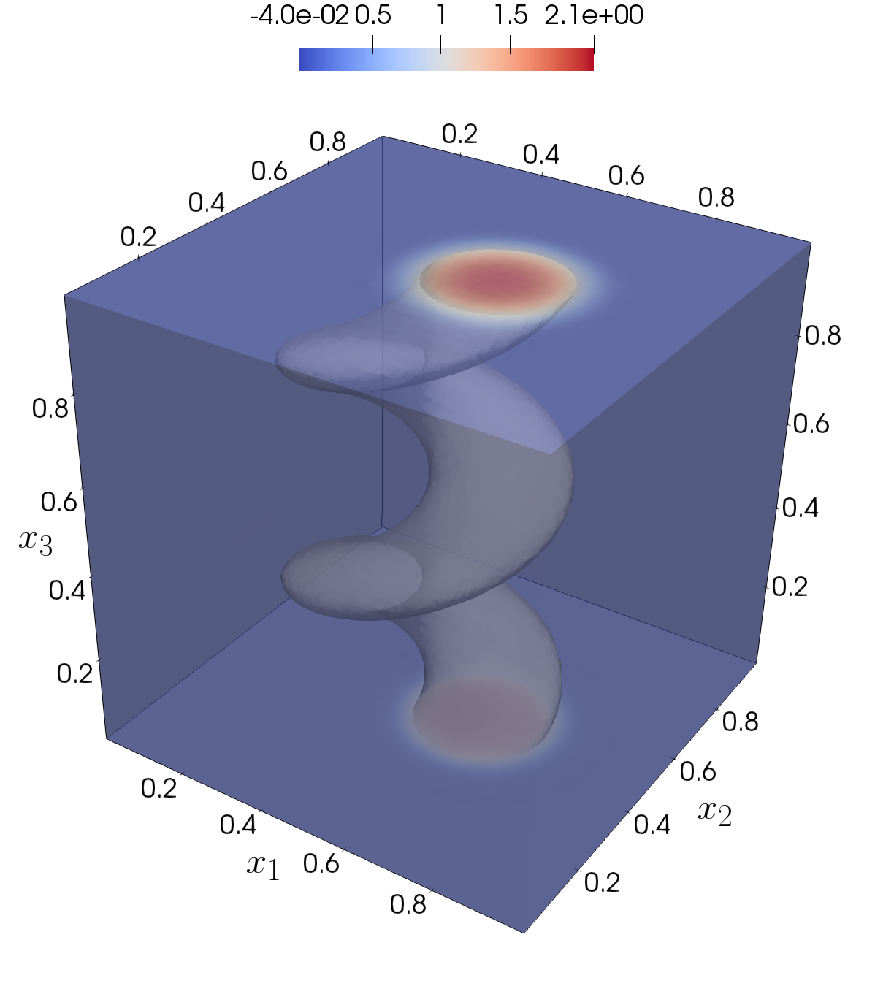}
    \caption{Contour representation of the solution}
    \label{fig:3D_sol}
  \end{subfigure}
  \begin{subfigure}[b]{0.48\textwidth}
    \centering
    \includegraphics[width=\textwidth]{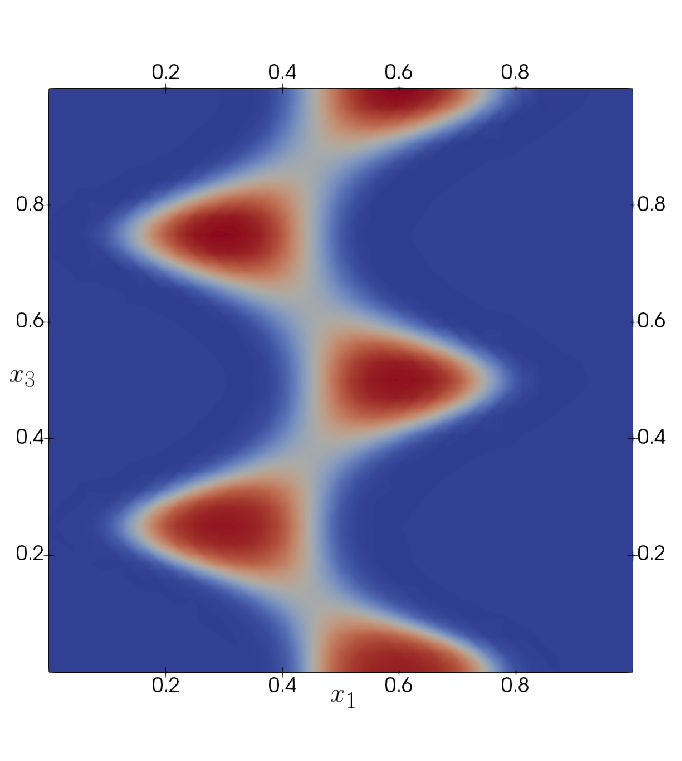}
    \caption{Solution cut over the plane $\{x_2 = 0.5\}$}
    \label{fig:3D_sol_cut_y}
  \end{subfigure}
  \vspace{0.4cm}
  \begin{subfigure}[b]{0.48\textwidth}
    \centering
    \includegraphics[width=\textwidth]{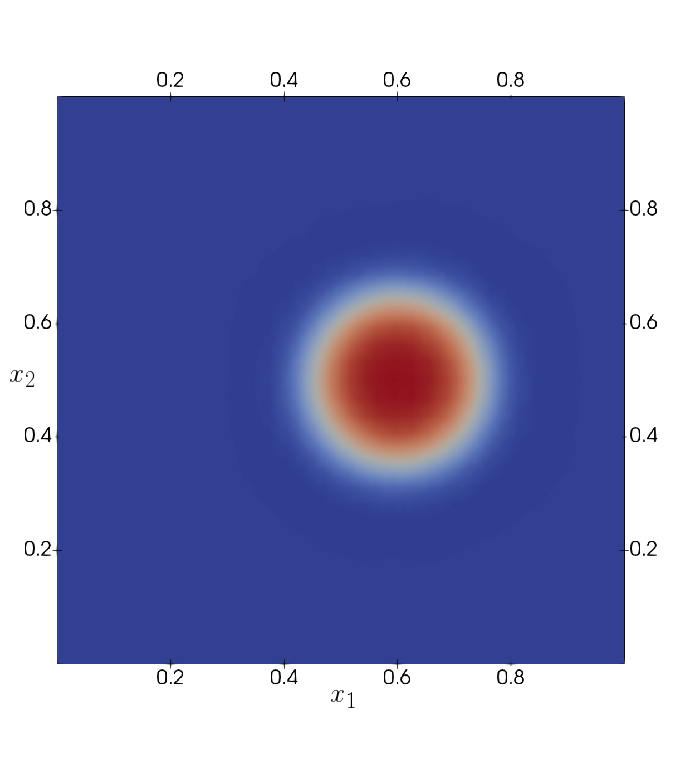}
    \caption{Solution cut over the plane $\{x_3 = 0\}$}
    \label{fig:3D_sol_cut_z0}
  \end{subfigure}
  \begin{subfigure}[b]{0.48\textwidth}
    \centering
    \includegraphics[width=\textwidth]{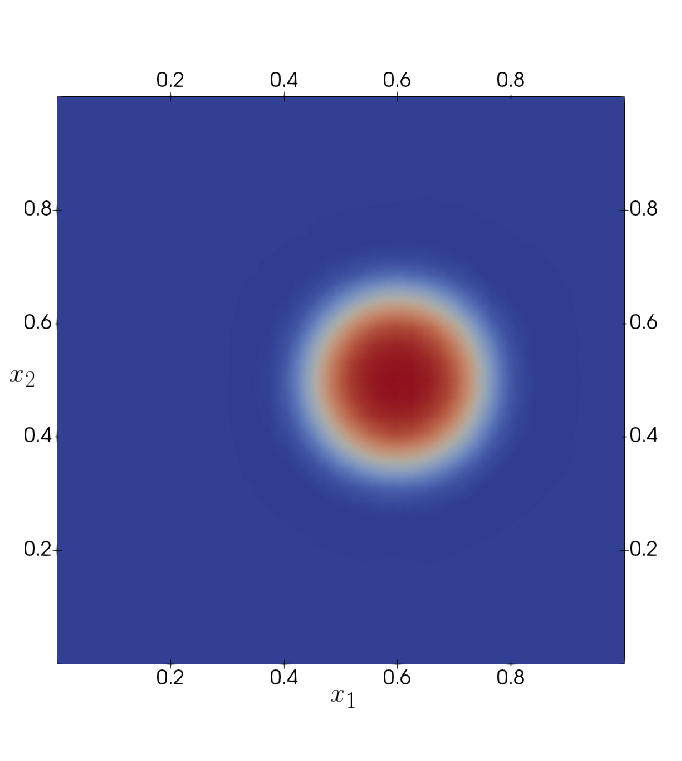}
    \caption{Solution cut over the plane $\{x_3 = 1\}$}
    \label{fig:3D_sol_cut_z1}
  \end{subfigure}
  \caption{3D model problem: solution contours over the whole domain and cuts over three planes at the level 31 of refinement; $p=1$. 159,705 DOFs for the trial space, and 3,569,333 total DOFs}
\end{figure}

\begin{figure}[t!]
    \centering
    \begin{subfigure}[b]{0.48\textwidth}
    \centering
        \includegraphics[width=\textwidth]{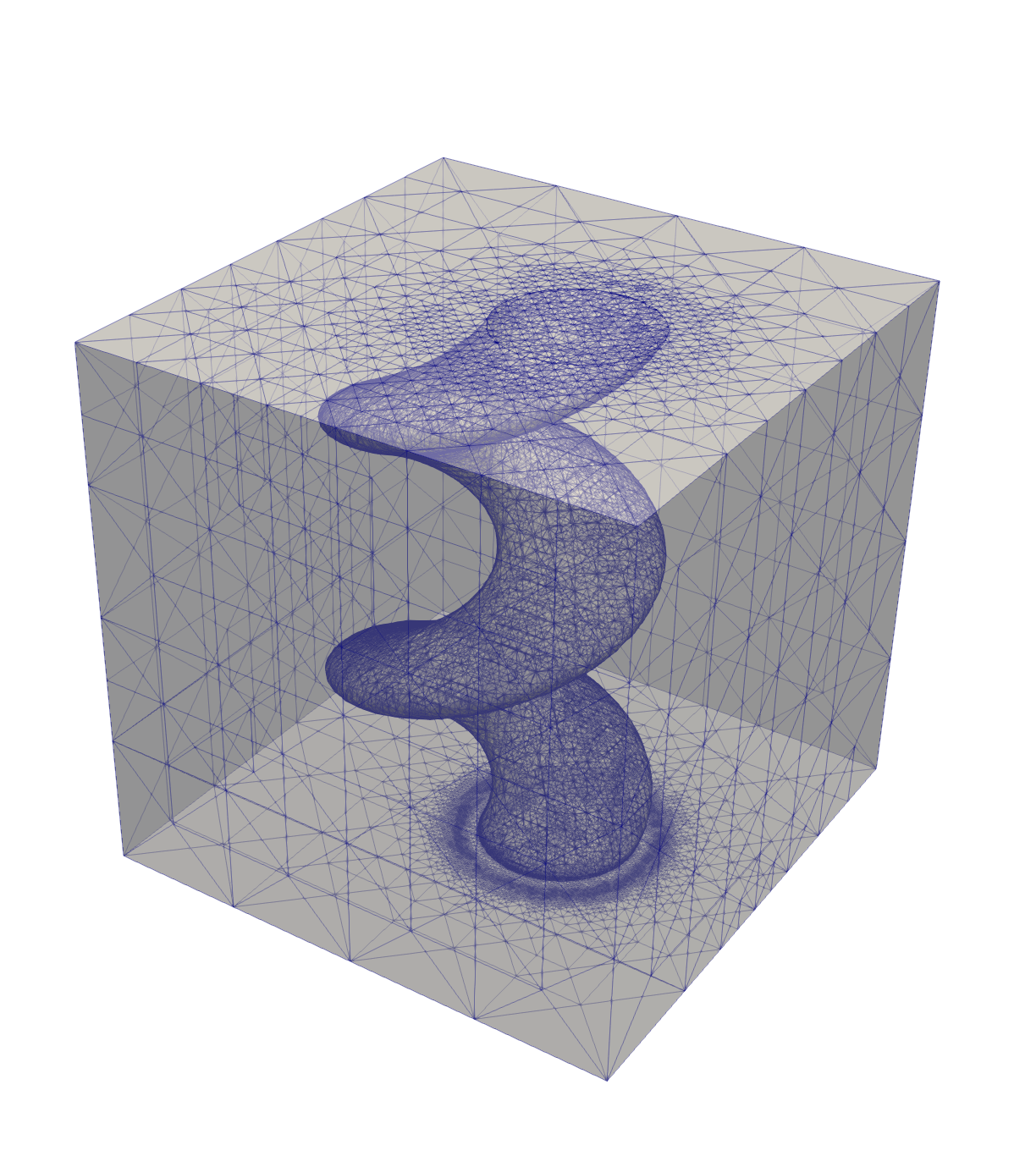}
        \caption{Solution contour lines with the mesh}
        \label{fig:mesh_spiral}
    \end{subfigure}
    \begin{subfigure}[b]{0.48\textwidth}
    \centering
        \includegraphics[width=\textwidth]{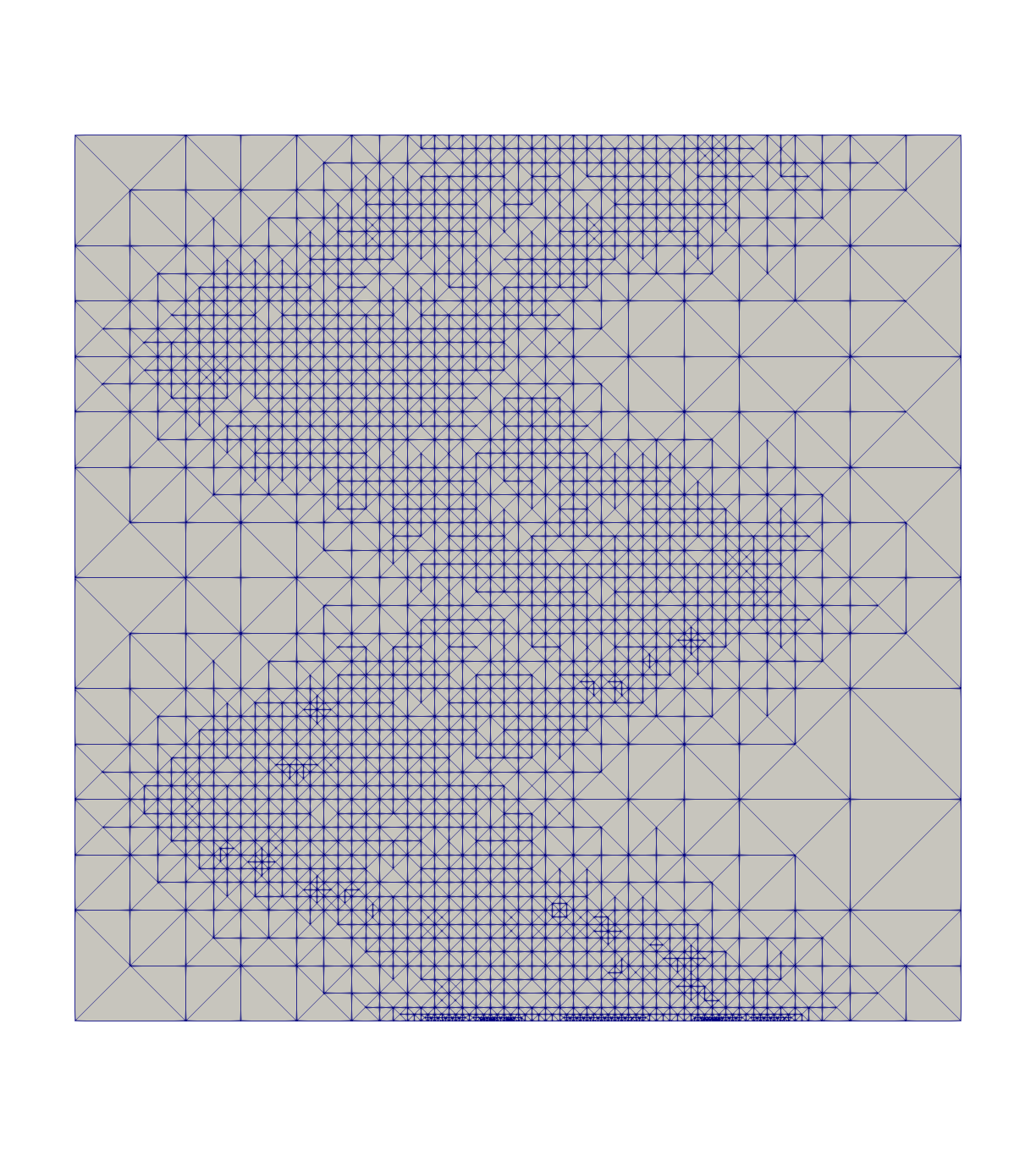}
        \caption{Mesh cut over the plane $\{x_2 = 0.5\}$}
        \label{fig:mesh_cut_y}
    \end{subfigure}
    \vspace{0.4cm}
    \begin{subfigure}[b]{0.48\textwidth}
    \centering
        \includegraphics[width=\textwidth]{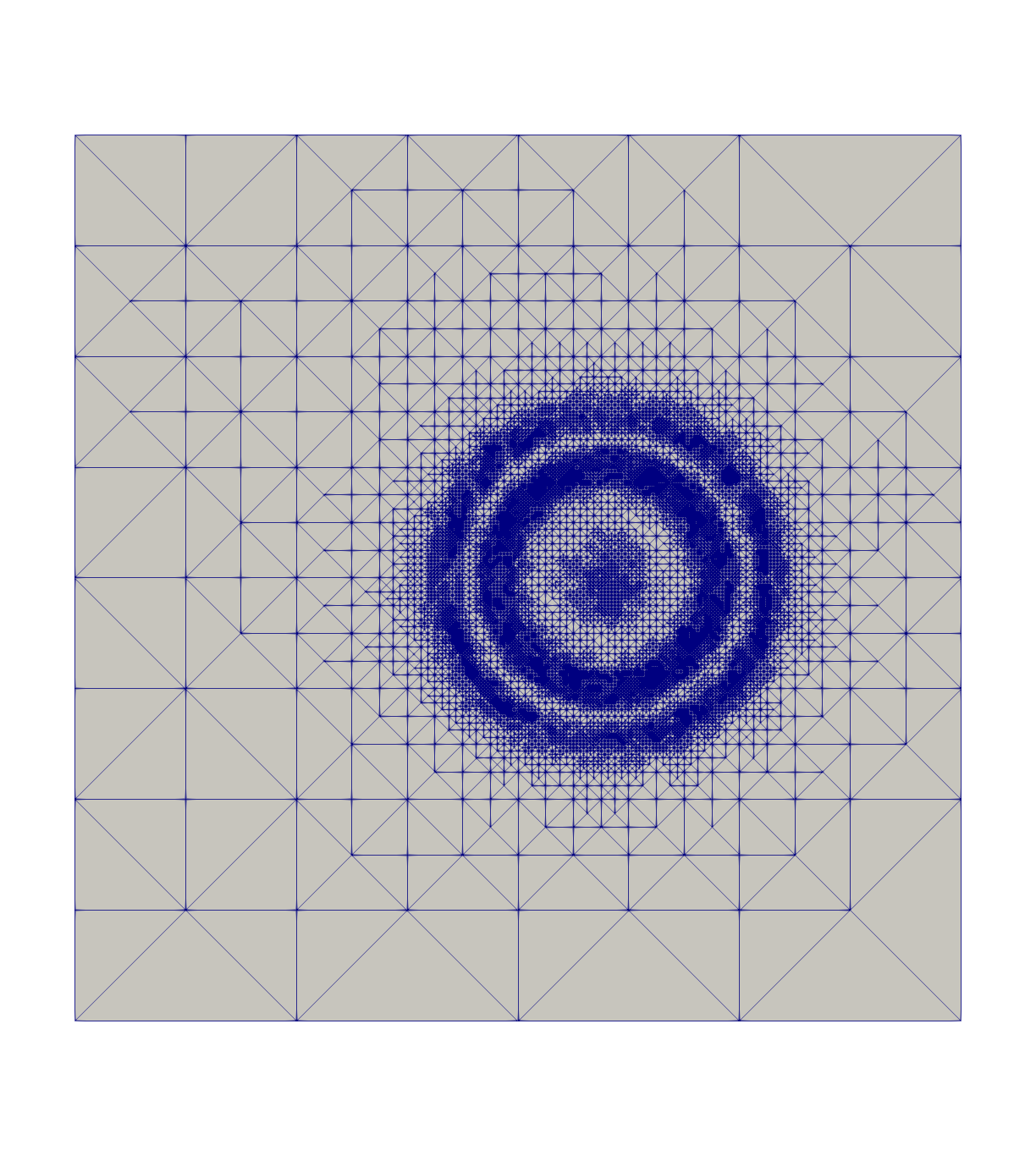}
        \caption{Mesh cut over the plane $\{x_3 = 0\}$}
        \label{fig:mesh_cut_z0}
    \end{subfigure}
    \begin{subfigure}[b]{0.48\textwidth}
    \centering
        \includegraphics[width=\textwidth]{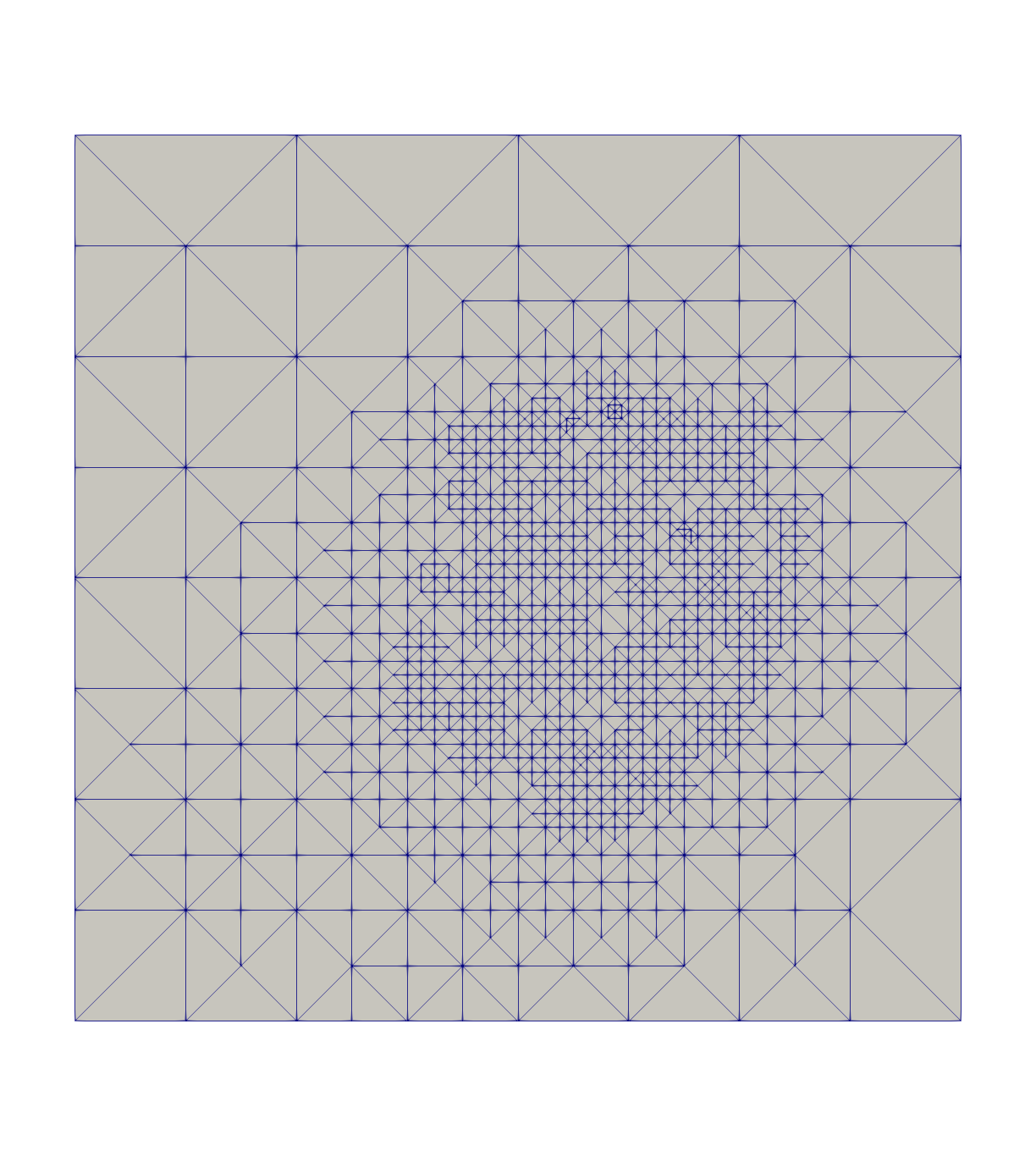}
        \caption{Mesh cut over the plane $\{x_3 = 1\}$}
        \label{fig:mesh_cut_z1}
    \end{subfigure}
   \caption{3D model problem: mesh over the whole domain and cuts over three planes at level 31 of refinement, $p=1$, 159,705 DOFs for the trial space, and 3,569,333 total DOFs}
\end{figure}

\subsubsection{Discussion of the 3D results}

In this section, we explore the performance of the proposed adaptive method with the $\|\cdot\|_{\textrm{up}}$-norm for the 3D model problem. We consider the value $M = 100$ in the exact solution~\eqref{eq:ana_3D} so that $u_M$ is close to the discontinuous limit when $M = \infty$. The method delivers accurate solutions by refining where it is most needed, as Figures~\ref{fig:3D_sol}-\ref{fig:3D_sol_cut_z1}, and~\ref{fig:mesh_spiral}-\ref{fig:mesh_cut_z1} \zzz{exemplify. These figures} show a 3D representation of the contour and cuts over the planes $\{x_2=0.5\}$, $\{x_3=0\}$ (part of inflow) and $\{x_3=1\}$ (part of outflow) for the solution and for the mesh obtained after 31 levels of adaptive refinement. The mesh refinement at the plane $\{x_3 = 0\}$ located at the inflow boundary is finer compared with that at the plane $\{x_3=1\}$ located at the outflow boundary. Finally, Figures~\ref{fig:spiral_l2}-\ref{fig:spiral_Vh} show the convergence in the $L^2$- and $V_h$-norms vs the total number of DOFs for the polynomial degrees $p=1,2$.
%
\begin{figure}[t!]
    \centering
    \begin{subfigure}[b]{0.49\textwidth}
        \includegraphics[width=\textwidth]{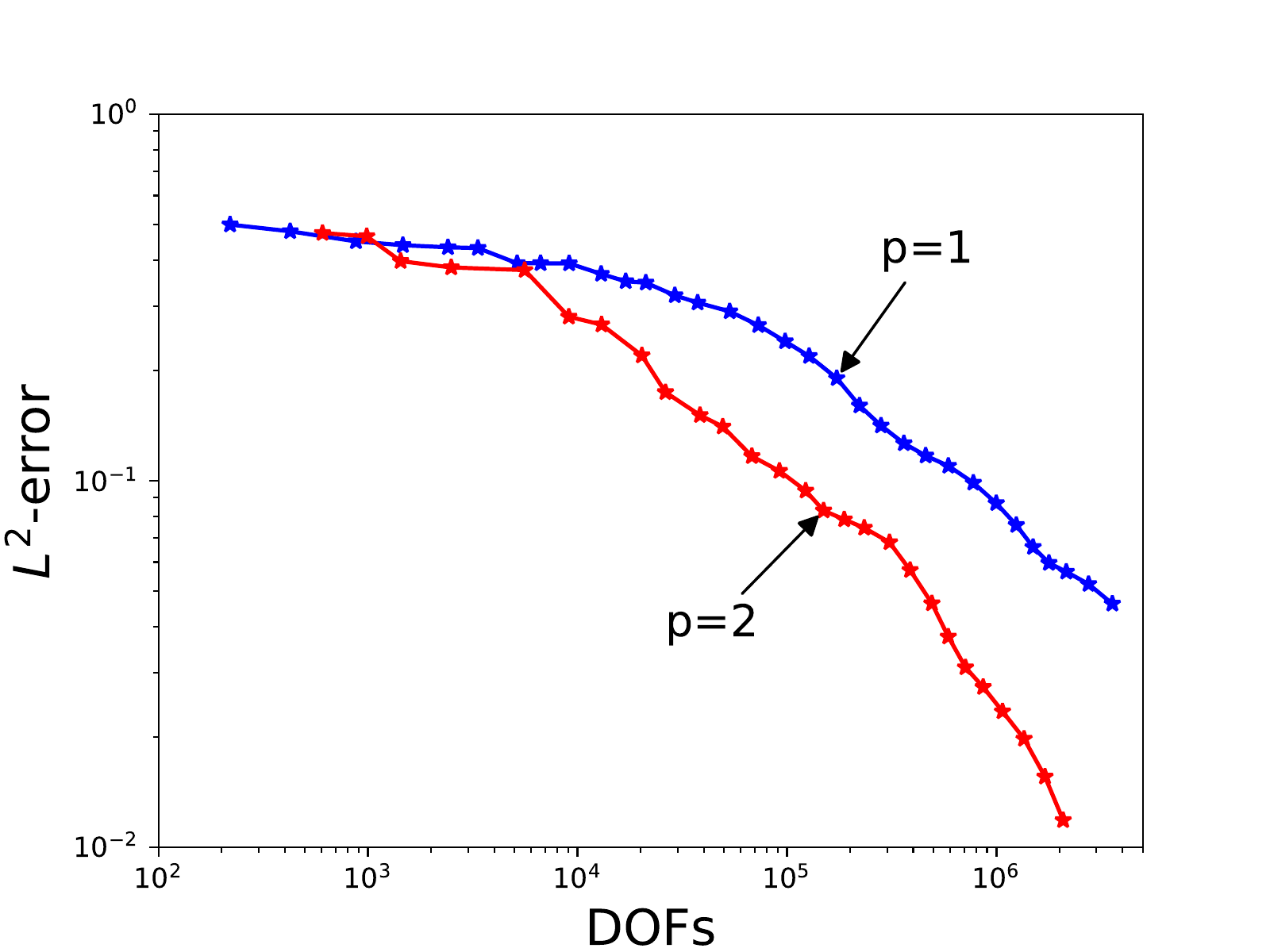}
        \caption{$L^2$ error vs.~DOFs}
        \label{fig:spiral_l2}
    \end{subfigure}
    %
    \begin{subfigure}[b]{0.49\textwidth}
        \includegraphics[width=\textwidth]{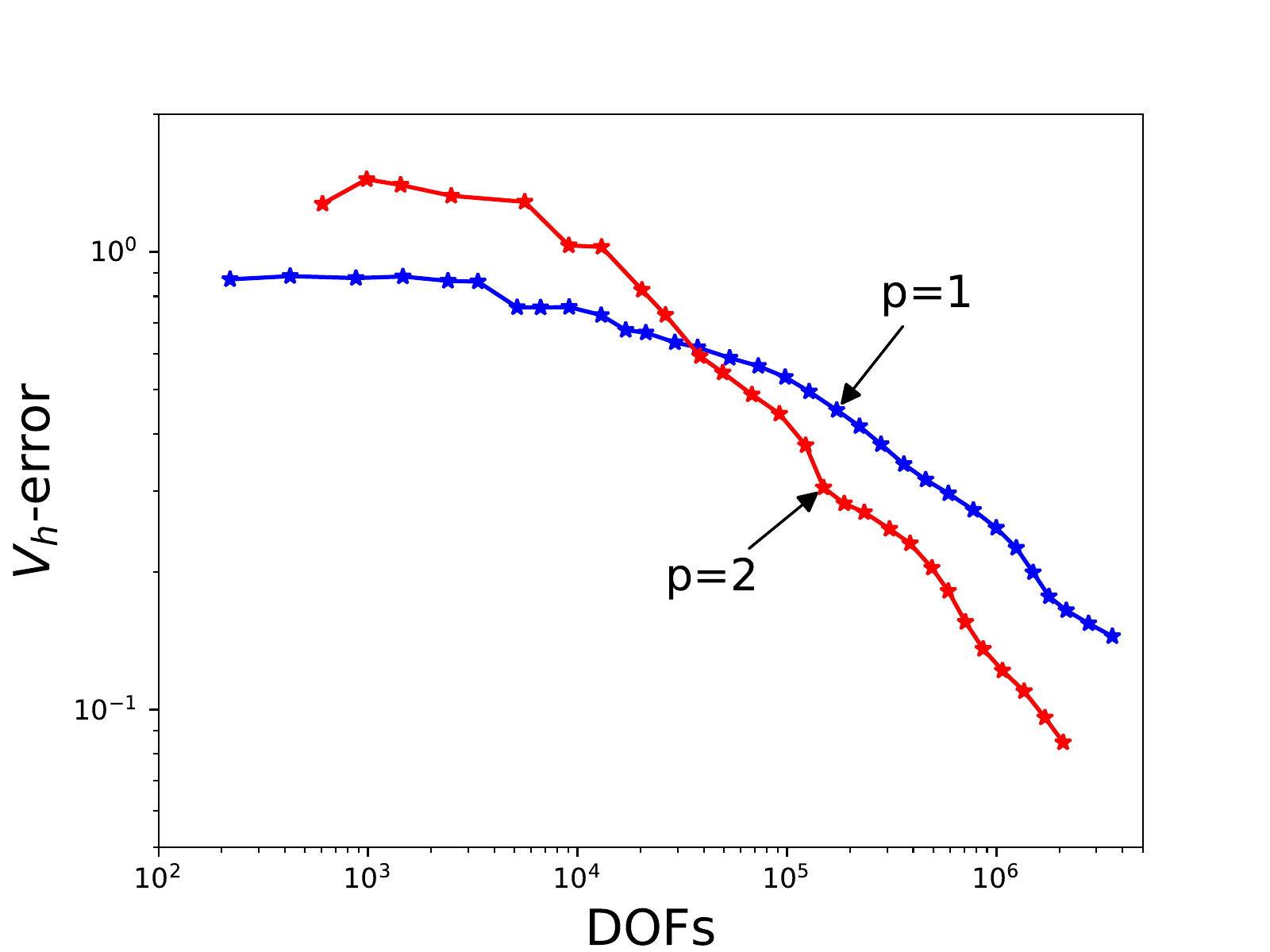}
        \caption{$V_h$ error vs.~DOFs}
        \label{fig:spiral_Vh}
    \end{subfigure}
   \caption{3D model problem: $L^2$-error and $V_h$-error vs.~DOFs for the 3D spiral problem with adaptivity mesh refinement; up norm. $p =1,2$}
   \label{fig:spiral_errors}
 \end{figure}




\section{Conclusions} \label{sec:conc}

In this paper, we propose a new stabilized finite element method based on residual minimization. The key idea is to obtain a residual representation using a dual \zzz{norm defined} over a discontinuous Galerkin space \zzz{that} delivers inf-sup stability. The cost is that one needs to solve a stable, saddle-point problem. The advantage is that one recovers at the same time a stabilized finite element solution and a residual \zzz{representative defined in the discontinuous Galerkin space. This residual representative can drive adaptive mesh refinement process}. Our numerical results on 2D and 3D advective model problems \zzz{with} sharp inner layers indicate that the \zzz{method} leads to competitive error \zzz{rates} on uniformly refined meshes \zzz{compared against discontinuous Galerkin approximations. Additionally, the method delivers} adaptive meshes that sharply capture \zzz{the} inner layers. Further studies are on the way to assess the computational performance of \zzz{our} method, especially in 3D, and on other model problems comprising, for example, systems of first-order PDEs of Friedrichs's type, as in Darcy's equations or in Maxwell's equations.

\appendix
\section{Proof of Theorem~\ref{th:FEMwDG}.}\label{ap:main_proof}
Recall the definition of the discrete operator $B_h:U_h\to V_h^*$ from~\eqref{eq:B_h} (here, the domain of $B_h$ is restricted to $U_h$). Let $B_hU_h\subset V_h^*$ be the range of $B_h$, and $(B_hU_h)^\perp\subset V_h$ be such that
$$
(B_hU_h)^\perp:=\{v_h\in V_h: b_h(z_h,v_h)=0, \forall z_h\in U_h\},
$$  
where $(B_hU_h)^\perp=\ker B_h^*$ with $B_h^*:V_h\to U_h^*$.

We prove the well-posedness of~\eqref{eq:mix_form} by establishing the equivalence between~\eqref{eq:mix_form} and the following constrained minimization problem: 
\begin{equation}\label{eq:cons_min}
\inf_{v_h\in (B_hU_h)^\perp}\left\{{1\over 2}\|v_h\|^2_{V_h}-\left<l_h,v_h\right>_{V_h^*,V_h}\right\}=:\inf_{v_h\in (B_hU_h)^\perp}F(v_h),
\end{equation}
which has a unique solution since the functional $F$ is strictly convex and $(B_hU_h)^\perp$ is a closed subspace of $V_h$. By differentiating with respect to $v_h$, we observe that the minimizer $\tilde v_h\in (B_hU_h)^\perp $ of~\eqref{eq:cons_min} must be a critical point of $F$, that is,
\begin{equation}\label{eq:opt}
(\tilde v_h,v_h)_{V_h} - \left<l_h,v_h\right>_{V_h^*,V_h}=0, \qquad \forall\, v_h\in (B_hU_h)^\perp.
\end{equation}
Any component $\varepsilon_h$ of a solution $(\varepsilon_h,u_h)\in V_h\times U_h$ of the saddle-point problem~\eqref{eq:mix_form} satisfies~\eqref{eq:opt}. Conversely, let $\varepsilon_h=\tilde v_h\in (B_hU_h)^\perp$ be the unique solution of~\eqref{eq:opt}. Then, 
$l_h-R_{V_h}\varepsilon_h$ is in ${(B_hU_h)^\perp}^\perp=B_hU_h$. Hence, there must be $u_h\in U_h$ such that $B_hu_h=l_h-R_{V_h}\varepsilon_h$. In other words, $(\varepsilon_h,u_h)\in V_h\times U_h$ solves the saddle-point problem~\eqref{eq:mix_form}. Finally, the uniqueness of the solution to~\eqref{eq:mix_form} readily follows from the injectivity of $B_h$ (which is a consequence of the inf-sup condition~\eqref{eq:infsup_h}) and the bijectivity of the Riesz isomorphism. 
 
Let us now prove the a~priori bounds~\eqref{eq:bounds} for $\varepsilon_h$ and $u_h$. Testing the equation~\eqref{eq:mix_forma} with $v_h=\varepsilon_h$, we infer that
$$
\|\varepsilon_h\|_{V_h}^2=\left<l_h,\varepsilon_h\right>_{V_h^*,V_h}\leq \|l_h\|_{V_h^*}\|\varepsilon_h\|_{V_h},
$$
and the first a~priori bound follows by dividing the above expression by $\|\varepsilon_h\|_{V_h}$.
For the second a~priori bound in~\eqref{eq:bounds}, we have
\begin{alignat*}{2}\tag{by~\eqref{eq:infsup_h} and~\eqref{eq:B_h}}
\|u_h\|_{V_h}\leq & {1\over C_{\rm sta}}\sup_{0\neq v_h\in V_h}{b_h(u_h,v_h)\over \|v_h\|_{V_h}} = {1\over C_{\rm sta}}\sup_{0\neq v_h\in V_h}{\big(R^{-1}_{V_h}B_hu_h,v_h\big)_{V_h}\over \|v_h\|_{V_h}}\\\tag{since $v_h =R^{-1}_{V_h}B_hu_h$ is the supremizer}
= & {1\over C_{\rm sta}}{\big(R^{-1}_{V_h}B_hu_h,R^{-1}_{V_h}B_h u_h\big)_{V_h}\over \|R^{-1}_{V_h}B_h u_h\|_{V_h}}\\\tag{since $b_h(u_h,\varepsilon_h)=0$}
= & {1\over C_{\rm sta}}\left[{
\big(\varepsilon_h+R^{-1}_{V_h}B_h u_h,R^{-1}_{V_h}B_h u_h\big)_{V_h}\over \|R^{-1}_{V_h}B_h u_h\|_{V_h}}\right]\\
= &{1\over C_{\rm sta}} {\big<l_h,R^{-1}_{V_h}B_h u_h\big>_{V_h^*,V_h}\over\|R^{-1}_{V_h}B_h u_h\|_{V_h}}\le {1\over C_{\rm sta}} \|l_h\|_{V_h^*}.\tag{by~\eqref{eq:mix_forma}}
\end{alignat*}

Finally, we prove the a~priori error estimate~\eqref{eq:apriori}. For any $z\in V_{h,\#}$, we define the projector $P_h:V_{h,\#}\to U_h$ by $P_h z=z_h$, where $z_h\in U_h$ is the second component of the solution $(\varepsilon_h,z_h)$ of the saddle-point problem~\eqref{eq:mix_form} with right-hand side $l_h(v_h)=b_h(z,v_h)$ (i.e., meaningful owing to Assumption~\ref{as:reg-consi}). Using the a~priori bound in~\eqref{eq:bounds} and the bound~\eqref{eq:continuity} in Assumption~\ref{as:bound} leads to
\begin{equation}\label{eq:proy}
\|P_hz\|_{V_h}=\|z_h\|_{V_h}\leq{1\over C_{\rm sta}}\|b_h(z,\cdot)\|_{V_h^*}\leq {C_{\rm bnd}\over C_{\rm sta}}\|z\|_{V_{h,\#}}\,\,.
\end{equation}
Besides, $P_hz_h=z_h$ for any $z_h\in U_h$. Indeed, in that case, $(0,z_h)$ solves the corresponding saddle-point problem~\eqref{eq:mix_form}. To conclude, we observe that for the exact solution $u\in X_{\#}$ and the discrete solution $u_h\in U_h$, we have
\begin{alignat*}{2}
\|u-u_h\|_{V_h} = & 
\|(u-z_h)-P_h(u-z_h)\|_{V_h}\tag{by definition of $P_h$, $\forall z_h\in U_h$}\\
\leq & \|u-z_h\|_{V_h}+{C_{\rm bnd}\over C_{\rm sta}}\|u-z_h\|_{V_{h,\#}}\tag{by the triangle inequality and~\eqref{eq:proy}}\\
\leq & \Big(1+{C_{\rm bnd}\over C_{\rm sta}}\Big)\|u-z_h\|_{V_{h,\#}}.\tag{by Assumption~\ref{as:bound}}
\end{alignat*}
The result follows by taking the infimum over $z_h\in U_h$.
\section{Best-approximation in the upwinding norm}\label{ap:best_proof}
\begin{figure}[t!]
    \centering
    \begin{subfigure}[b]{0.32\textwidth}
        \includegraphics[width=\textwidth]{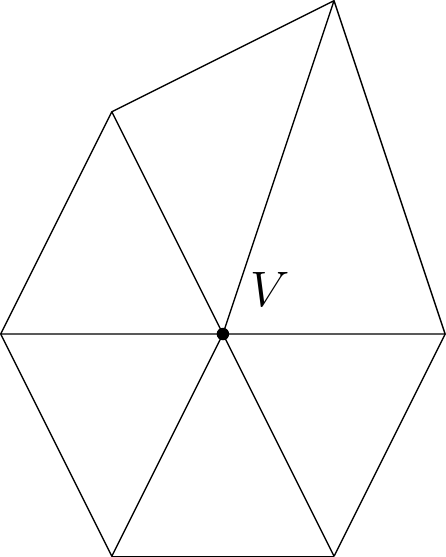}
        \caption{Set $D_V$}
        \label{fig:Dv}
    \end{subfigure}
    \hspace{0.6cm}
    \begin{subfigure}[b]{0.4\textwidth}
        \includegraphics[width=\textwidth]{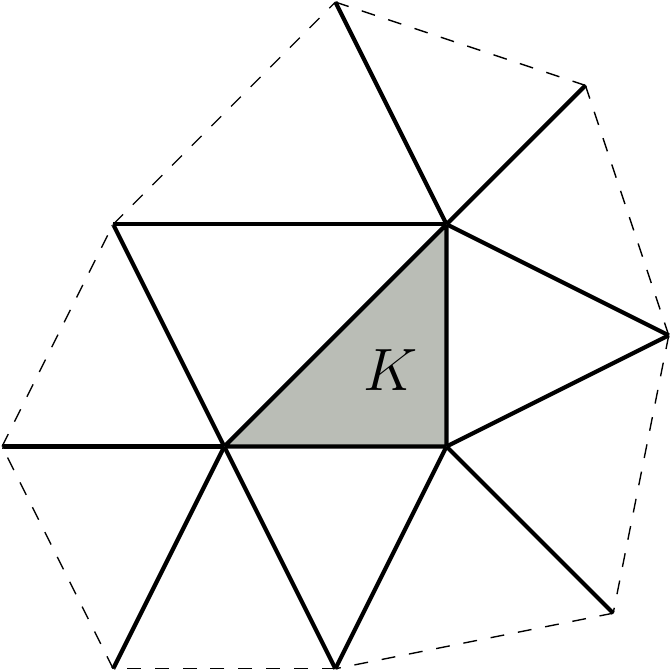}
        \caption{Set $\Sk_K$ (thick lines)}
        \label{fig:SK}
    \end{subfigure}
   \caption{Sets considered for the averaging operator and asspciated with a given node $V$ (here a mesh vertex) and an element $K\in \Par_h$}
   \label{fig:oswald_setting}
 \end{figure}
We introduce the well-known averaging operator (also known as Oswald interpolator) 
$\Pi^{\text{av}}_{h} \, : \, \Pol_p(\D_h) \rightarrow \Pol_p(\D_h) \cap H_0^1(\Omega) = U_h$ 
such that, for any interpolation node $V \in \overline{\Omega}$,
\begin{equation}
\Pi^{\text{av}}_{h}(v_h)(V) := \dfrac{1}{\card(\D_V)} \sum_{K \in \D_V} v_h|_K(V),
\end{equation}
with $\D_V \subset \D_h$ denoting the union of the elements $K$ sharing $V$ as a common node (see the left panel of Figure~\ref{fig:Dv}). In~\cite{ KarPA:03,BurEr:07,ErnGu:16_quasi}, it is shown that, for all $K\in\Par_h$,
\begin{eqnarray}\label{eq:oswald_ineq}
\left\| v_h - \Pi^{\text{av}}_{h}(v_h) \right\|_{K}^2 \lesssim \sum_{e \in \Sk_K\cap \Sk_h^0} h_k \left\| \llbracket v_h \rrbracket \right\|_e^2,
\end{eqnarray} 
with $\Sk_K$ denoting the mesh faces/edges having a non-empty intersection with $\partial K$ (see the right panel of Figure~\ref{fig:SK}). 

Let $v \in H^{s}(\Omega) \cap V$, $s> \frac12$, and denote by $v_h \in V_h$ a function such that
\begin{equation}
\|v-v_h\|_{\textrm{up},\#} := \inf_{z_h \in V_h} \|v-z_h\|_{\textrm{up},\#}.  
\end{equation}
Let $v_h^\ast:= \Pi^{\text{av}}_{h}(v_h)$. Since $v_h^\ast \in U_h$, we have
\begin{eqnarray}
\inf_{z_h \in U_h} \|v-z_h\|_{\textrm{up},\#} \leq  \|v-v_h^\ast\|_{\textrm{up},\#}
\leq \|v-v_h\|_{\textrm{up},\#} + \|v_h-v_h^\ast\|_{\textrm{up},\#}.
\end{eqnarray}
Therefore, we only need to show that $\|v_h-v_h^\ast\|_{\textrm{up},\#} \lesssim \|v-v_h\|_{\textrm{up},\#}$ to prove the claim. 
Using inverse and discrete trace inequalities, we infer that
\begin{equation}
\|v_h-v_h^\ast\|_{\textrm{up},\#}^2 \lesssim \sum_{K\in \Par_h} h_K^{-1}  \|v_h-v_h^\ast\|_K^2.
\end{equation}
Using~\eqref{eq:oswald_ineq} leads to
\begin{equation}
\|v_h-v_h^\ast\|_{\textrm{up},\#}^2 \lesssim \sum_{e\in \Sk_h^0}  \|\llbracket v_h \rrbracket \|_e^2.
\end{equation}
Since $\llbracket v\rrbracket =0$, for all $e\in \Sk_h^0$ (recall that $s>\frac12$), we have $\llbracket v_h \rrbracket
= \llbracket v_h-v \rrbracket$. We can now use the triangle inequality to decompose the jump into the two parts coming from the two cells sharing $e$, and re-arranging the terms leads to
\begin{equation}
\|v_h-v_h^\ast\|_{\textrm{up},\#}^2 \lesssim \sum_{K\in\Par_h}  \|v-v_h\|_{\partial K}^2 \le \|v-v_h\|_{\textrm{up},\#}^2,
\end{equation}
thereby completing the proof.

\section*{Acknowledgements}
This publication was made possible in part by the CSIRO Professorial Chair in Computational Geoscience at Curtin University and the Deep Earth Imaging Enterprise Future Science Platforms of the Commonwealth Scientific Industrial Research Organisation, CSIRO, of Australia. The European Union's Horizon 2020 Research and Innovation Program of the Marie Sk\l{}odowska-Curie grant agreement No. 777778 provided additional support. At Curtin University, The Institute for Geoscience Research (TIGeR) and by the Curtin Institute for Computation, kindly provide continuing support. The work by Ignacio Muga was done in the framework of Chilean FONDECYT research project $\#1160774$. 
%
\bibliography{mybibfile}

\end{document}